\newtheorem{theorem}{Theorem}[section]
\newtheorem{lemma}[theorem]{Lemma}
\newtheorem{corollary}[theorem]{Corollary}
\newtheorem{proposition}[theorem]{Proposition}
\theoremstyle{definition}
\newtheorem{definition}[theorem]{Definition}
\newtheorem{assumption}[theorem]{Assumption}
\newtheorem{remark}[theorem]{Remark}
\theoremstyle{remark}
\numberwithin{equation}{section}
\newcommand{\field}[1]{\mathbb{#1}}
\newcommand{\R}{\field{R}}
\newcommand{\C}{\field{C}}
\DeclareMathOperator{\dom}{dom}    
\DeclareMathOperator{\dist}{dist}  
\DeclareMathOperator{\supp}{supp}  
\DeclareMathOperator*{\esssup}{ess\,sup}
\DeclareMathOperator*{\essinf}{ess\, inf}
\DeclarePairedDelimiter{\abs}{\lvert}{\rvert}
\DeclarePairedDelimiter{\norm}{\lVert}{\rVert}
\renewcommand{\Re}{\operatorname{Re}}
\renewcommand{\Im}{\operatorname{Im}}
\newcommand{\ii}{\imath}
\newcommand{\cA}{\mathcal{A}}
\newcommand{\bA}{\mathbf{A}}
\newcommand{\bX}{\mathbf{X}}
\newcommand{\cJ}{\mathcal{J}}
\newcommand{\cI}{\mathcal{I}}
\newcommand{\cH}{\mathrm{H}}
\newcommand{\cHaus}{\mathcal{H}}
\newcommand{\cL}{\mathcal{L}}
\newcommand{\Hi}{H}
\newcommand{\eps}{\varepsilon}
\newcommand{\dd}{\, \mathrm{d}}
\newcommand{\ft}{\mathfrak{t}}
\newcommand{\cQ}{\mathcal{Q}}
\newcommand{\cT}{\mathcal{T}}
\newcommand{\ReOp}[1]{\operatorname{\mathfrak{Re}}#1}
\newcommand{\ImOp}[1]{\operatorname{\mathfrak{Im}}#1}
\begin{document}
\author{%
  Hannes Meinlschmidt\footnote{FAU Erlangen-N\"urnberg, Chair for Dynamics, Control, Machine Learning and
  Numerics (AvH Professorship), Department of Mathematics, Cauerstr.~11, 91058~Erlangen, Germany,
  meinlschmidt@math.fau.de. The work of~H.M.\ was partially funded by the Deutsche Forschungsgemeinschaft
  (DFG, German Research Foundation) – Projektnummer~551486487.}~~\&~Joachim Rehberg\footnote{WIAS Berlin, Mohrenstr.~39, 10117~Berlin, Germany, rehberg@wias-berlin.de}
} \title{Sharp angle estimates for second order divergence operators}
\date{\vspace{-1em}}

\maketitle
\begin{abstract}
  This article is about the (minimal) sector containing the numerical range of the principal part of a
  linear second-order elliptic differential operator defined by a form on closed subspaces $V$ of the
  first-order Sobolev space $W^{1,2}(\Omega)$ incorporating mixed boundary conditions. We collect a
  comprehensive array of results on the angle of sectoriality and the $\cH^\infty$-angle attached to
  realizations of the elliptic operator. We thereby consider the operator in several scales of Banach
  spaces: the Lebesgue space, the negative Sobolev space, and their interpolation scale. For the latter
  two types of spaces, we rely on recent results regarding the Kato square root property. We focus on
  minimal assumptions on geometry, and we consider both real and complex coefficients. Not all results
  presented are new, but we strive for a streamlined and comprehensive overall picture from several
  branches of operator theory, and we complement the existing results with several new ones, in
  particular aiming at explicit estimates built on readily accessible problem data. This concerns for
  example a new estimate on the angle of the sector containing the numerical range of a linear,
  continuous and coercive Hilbert space operator, but also an explicit estimate for the angle of
  sectoriality for the elliptic operator on $L^p(\Omega)$ with complex coefficients without any assumptions on
  geometry and a general transfer principle for the Crouzeix-Delyon theorem from bounded operators to
  sectorial ones, keeping the explicit constant.
\end{abstract}


\par\vskip\baselineskip\noindent
\textbf{Subject Classification}: 460N20, 47A10, 47A12, 47A60

\section{Introduction}

One of the concepts that mathematicians usually teach already to undergraduate students in Linear Algebra
courses is how to understand the precise behavior of linear transformations by looking at their
spectrum. This paradigm stays strong even far beyond graduate studies and applies equally well to closed
operators $B$ on Banach spaces $X$ that arise when studying partial differential equations through the
lens of functional analysis. For example, studying an abstract linear Cauchy problem in $X$ such as
\begin{equation*}
  u'(t) + Bu(t) = f(t) \quad \text{in}~X \qquad (0 < t < T), \qquad u(0) = u_0,
\end{equation*}
one wants to define the operator exponential $S(t) \simeq e^{-tB}$ of $-B$ to determine the solution by the Duhamel formula
$u(t) = S(t)u_0 + \int_0^t S(t-s)f(s) \dd s$. It is well known that in the relevant case of an unbounded operator, where
$B$ is, say, a linear second-order elliptic differential operator, the existence of such an operator
exponential is already tied to its spectral behavior---meaning both the location of the spectrum \emph{and}
suitable decay of the resolvent operators outside of it; this is the famous Hille-Yosida theorem~\cite[Ch.~1.3]{pazy}. But even more, if the
spectrum of $B$ is contained in a sector $\Sigma_\theta$ in the complex plane of opening (half) angle
$\theta < \sfrac\pi2$ and the norms of the resolvents $(B-\lambda)^{-1}$ decay asymptotically like
$\sfrac1\lambda$, then the semigroup is colloquially called \emph{holomorphic}, and one identifies the abstract
Cauchy problem as \emph{parabolic}~\cite[Ch.~2.5]{pazy}, since one is enabled to derive classically
parabolic effects such as instant smoothing for the solution $u$. (In this work, sectors are closed. We
will introduce all objects properly in the main text below.)

A disadvantage of the whole idea is that the spectrum of unbounded operators is rather unwieldy. As a
remedy, it is convenient to work with the \emph{numerical range} of $B$,
\[
  N(B) \coloneqq \Bigl\{ u^*(Bu) \colon u \in \dom(B ),~\|u \|_X=1,~u^* \in J(u)\Bigr\}
\]
where $J \colon X \rightrightarrows X^*$ is the duality mapping. (If $X$ is reflexive, which it always will be in this
work, then $J$ is single-valued.) It is well known that the numerical range of an operator carries much
information on its spectral behavior. In many interesting cases, the (closure of) the numerical range in
fact contains the spectrum. It is thus often attractive to work with the numerical range instead
of the spectrum. In fact, the numerical range has found countless applications in the most diverse
disciplines of mathematics, be that matrix analysis, numerical analysis, or quantum physics. We do not attempt to list them all but refer
to~\cite{GustafsonRao,benzi} for a good overview of basic results and a comprehensive list of further literature.

In the present work, we concentrate on the case where $B$ corresponds to a linear elliptic second-order
differential operator of the form $u \mapsto - \operatorname{div}(\mu\nabla u)$ with a coefficient matrix
$\mu$ and we are interested precisely in the \emph{numerical range} of $B$ being contained in a sector
$\Sigma_\theta$ with $\theta < \sfrac\pi2$, and in the infimum $\omega$ of all such $\theta$---or, at least, a good estimate for the
latter. In the basic case of a Hilbert space $X$, it turns out that since the open left half plane
belongs to the resolvent set $\rho(B)$ of $B$ due to the Lax-Milgram lemma, the property
$N(B) \subseteq \Sigma_\theta$ already implies that the spectrum of $B$ is also contained in
$\Sigma_\theta$ and that the resolvent $(B-\lambda)^{-1}$ decays like $\sfrac1\lambda$---that is, $B$ is a \emph{sectorial}
operator and $-B$ is the generator of a bounded holomorphic semigroup $S$. The minimal angle of such a
sector is called the \emph{spectral angle} $\phi(B)$ of $B$ and there is a direct correspondence to the
maximal angle $\vartheta(B)$ for the sector of holomorphy of the semigroup $S$ via
$\phi + \vartheta = \sfrac\pi2$. There is also an intimate relation to $B$ admitting a bounded holomorphic
functional calculus ($\cH^\infty$-calculus) on the sector $\Sigma_\theta^\circ$, and to the notion of \emph{spectral
  sets}~\cite{crouz1,greenbaum}. The bounded $\cH^\infty$-calculus of angle $< \sfrac\pi2$ also implies
\emph{maximal parabolic regularity} for $B$. We refer to Section~\ref{sec:defin-prel-results} below for
all these concepts and results.

\textbf{Contribution.} From the foregoing explanations, it becomes apparent that it is most important to
understand well the sectors that contain the numerical range of a prototype linear second-order elliptic
operator. Such an operator can be considered in different Banach spaces $X$. It is also clear that
\emph{a priori} it is not at all obvious or expected that there is stability of these sectors when the
Banach space $X$ is varied, even not within in a natural scale like the $L^p$-spaces. Thus we are not
only concerned with the sectors and their angles \emph{per se}, but also how they depend on certain other
reference angles. The reader has surely already guessed that the optimal angle for the $L^2$-realization
of the differential operator will play a central role.

We proceed as follows. We consider the realization $A$ of the differential operator
$u \mapsto - \operatorname{div}(\mu\nabla u)$ with mixed boundary conditions and the uniformly bounded and elliptic
coefficient function $\mu \colon \Omega \to \cL(\C^d)$ in $L^2(\Omega)$ for an open set $\Omega$ via the form
method~\cite[Ch.~6]{kato}. The form will thus be the classical Dirichlet form
\begin{equation*}\tag{\ref{eq:diriform}} \ft \colon W^{1,2}(\Omega) \times W^{1,2}(\Omega) \ni (u,v) \mapsto \int_\Omega \bigl(
  \mu\nabla u , \nabla v \bigr) \dd x
\end{equation*}
and we also consider the restriction $\ft_V$ to closed subspaces $V$ of $W^{1,2}(\Omega)$ that carry mixed
boundary conditions. Thus $A$ is the operator corresponding to the form $\ft_V$. In
Section~\ref{sec:sharp-incl-numer}, we first give a new estimate for an angle $\alpha$ such that
$N(\ft) \subseteq \Sigma_\alpha$, where $N(\ft)$ is the numerical range of the form $\ft$. This improves upon the recent
result in~\cite{Linke}. The estimate uses only explicit data of the coefficient function $\mu$ and
transfers to $\ft_V$. It is sharp in several cases. Such an estimate is relevant for what was explained
above, because $N(A)$ is a dense subset of $N(\ft_V)$, so both numerical ranges share the same optimal
angle $\omega$ for a sector containing them. It also shows that we can find uniform sectors for operators with
different coefficient functions as long as the data entering the estimate is uniform.

We then collect several concepts such as sectoriality, holomorphic semigroups, the $\cH^\infty$-calculus,
maximal parabolic regularity, and related results in Section~\ref{sec:defin-prel-results}, before putting
them to use for the $L^2(\Omega)$-operator $A$ in Section~\ref{sec:operator-on-L2}. Most of these results are
well known and we merely aim for a clear and streamlined presentation. However, we give a proof of the
Crouzeix-Delyon theorem~\cite{crouz0,crouz2,ransford} for unbounded, sectorial operators in which the
constant is transferred from the bounded operator case. This seems to be a new result, see
Theorem~\ref{thm:CrouzeixPal}.

Next, we concern ourselves with the realization $A_p$ of $A$ in $L^p(\Omega)$ spaces, treating first the case
of real coefficients in Section~\ref{ss-realcoeff} and then the---more involved---case of complex coefficient
in Section~\ref{ss-complexcoeff}. In the real case, we do not claim much originality except for an
interpolation result for the $\cH^\infty$-angle in Theorem~\ref{thm:functkalc}. It turns out that without
further assumptions on the geometry of $\Omega$ and its boundary, we will not recover the optimal
$L^2$-angle $\omega$ for $N(A_p)$, but at least one that can be shown to be sharp in some cases
(Theorem~\ref{thm:chill}). A way to recover $\omega$ is via Gaussian estimates and a Sobolev embedding
assumption for the form domain $V$, cf.~\eqref{eq:formdomain-embed}. In the complex coefficients case,
the situation is more dire. Again, with some geometric assumptions on $\Omega$ and its boundary, one can
recover the $L^2$-angle $\omega$ for $A_p$ as well. Without these assumptions, mirroring the real case, we can
only determine an estimate for a sector containing $N(A_p)$ using the optimal angle of sectoriality for
the coefficient function. We do so in Theorem~\ref{thm:numrange-complex-estimate}, thereby (slightly)
improving a similar result in~\cite{boehnlein-dynamic} with a direct proof. Previously, there were only
similar results known for the case where the imaginary part $\Im \mu$ of the coefficient function was
symmetric~\cite{cialdea/mazya}. The new proof follows a particular structure that is also implicitly
present in the proofs for the real case that the authors are aware of. It shows that the deciding sector
is the one that contains a $p$-adapted numerical range of the coefficient function. We rely on
$p$-ellipticity~\cite{carbon} and associated results to show these results. Let us point out that there
is no assumption on the geometry of $\Omega$ and its boundary at all.

Finally, in Section~\ref{sec:oper-negat-sobol}, we switch to a weak setting and
consider $A$ in $W^{-1,q}_D(\Omega)$, a negative (dual) Sobolev space carrying generalized mixed boundary
conditions, for certain values of $q$, and then in interpolation spaces between $L^q(\Omega)$ and
$W^{-1,q}_D(\Omega)$. These spaces have become prominent in the treatment of nonlinear evolution equations
with nonsmooth data corresponding to real world problems, such as semiconductor
dynamics~\cite{HannesVR,Disser} or the thermistor problem~\cite{thermI,thermII}. Here, we use the
operator square root $(A_p+1)^{1/2}$ isomorphism~(\cite{egert,Bechtel-SquareRoot2}) for $p = q'$ to move
between $L^q(\Omega)$ and $W^{-1,q}_D(\Omega)$. The geometric assumptions to achieve this are strong enough to
recover the $L^2$-angle $\omega$ also for the operator in the negative Sobolev spaces and interpolation spaces in between.

We give a streamlined presentation that traces the occurrence of the several angles mentioned before
throughout the theory of sectorial operators on the usual function spaces. Thus, not all results are new,
and not always the greatest generality in setup is achieved. Nevertheless, we have been careful not to
assume anything on geometry when it was not required, but \emph{if} it was required, then we have usually
opted for a good compromise of generality and distracting effort needed to introduce concepts.


\paragraph{Notation and conventions.} The notation that used in the article will generally be standard or
properly introduced at point. We nevertheless mention a few conventions: All Banach spaces under
consideration are complex vector spaces. Given a Hilbert space $H$, the norm derived from the inner
product $(\cdot,\cdot)_H$ is denoted by $\abs{\cdot}_H$. For the particular Hilbert space $H = \C^d$, we do not label
norms and inner products. Norms on Banach spaces $X$ will be written as usual $\norm{\cdot}_X$. For operator
norms we use the notation $\norm{\cdot}_{X\to Y}$ with the obvious meaning. We have tried to be consistent with
using $z$ for generic complex numbers, $\lambda$ for resolvent points, and $\delta$ or $t$ for real
parameters. There will be various angles of different meaning in the following. The convention to have in
mind is as follows: $\omega$ signifies an optimal angle of sectoriality of something, $\phi$ the optimal spectral
angle, $\psi$ the optimal $\cH^\infty$-angle, and finally $\alpha$ for various estimates or upper bounds on
angles. Free variables for angles will be $\theta$ and, if necessary, $\vartheta$.

\section{A sharp inclusion of the numerical range of the form}\label{sec:sharp-incl-numer}

We start with a few definitions. Let $0 \leq \theta < \pi$. Then
\begin{equation*}
  \Sigma_\theta \coloneqq \Bigl\{z \in \C \colon \lvert \arg z\rvert \leq \theta\Bigr\} \cup \{0\}
\end{equation*}
denotes the closed sector of opening angle $2\theta$ in the complex plane. Let further $H$ be a
complex Hilbert space and let $L$ be a linear operator in $H$. We define its \emph{numerical range} or
\emph{field of values} by
\begin{equation*}
  N(L) \coloneqq \Bigl\{(Lx,x)_{\Hi} \colon x \in \dom(L),~\lvert x \rvert_{\Hi} = 1\Bigr\} \subseteq \C.
\end{equation*}
The \emph{numerical radius} of $L$ is defined by \begin{equation*} n(L) \coloneqq \sup \Bigl\{ \lvert
  \lambda\rvert \colon \lambda \in N(L)\Bigr\}.
\end{equation*}
When $L$ is in fact a bounded operator on $H$, we also employ its decomposition into a sum of a
selfadjoint and a skewadjoint operator, using the real and imaginary operator parts of $L$,
\begin{equation*}
  L = \frac{L+L^*}2 + \ii \frac{lem:L^*}{2\ii} \eqqcolon \ReOp{L} + \ii \ImOp{L}.
\end{equation*}
Note that both $\ReOp{L}$ and $\ImOp{L}$ are selfadjoint bounded operators whose numerical ranges are
convex subsets of $\R$, and their numerical radii coincide with their respective operator
norms. Moreover, $\Re (Lx,x)_{\Hi} = (\ReOp{L} x,x)_{\Hi}$ and $\Im (Lx,x)_{\Hi} = (\ImOp{L} x,x)_{\Hi}$.

We thus say that $L$ is \emph{coercive} with the constant $m > 0$, or, for short, $\ReOp{L} \succeq m$, if $(\ReOp{L}x,x)_{\Hi} \geq m \abs{x}_{H}^2$ for
all $x \in H$.

Let us proceed with the following elementary yet quite precise lemma, that we were surprised not to
find in the common literature.

\begin{lemma}\label{lem:numrange-newer}
  Let $L$ be a bounded operator in a Hilbert space $\Hi$ satisfying the coercivity condition
  $\ReOp{L} \succeq m>0$. Then
  \begin{equation*}
    N(L) \subseteq \Sigma_{\alpha} \qquad \text{with} \qquad \tan(\alpha) = \frac{n(\ImOp{L})}m.
  \end{equation*}
\end{lemma}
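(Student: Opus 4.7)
This is an elementary but genuinely clean computation, and I would just unpack the definitions and use the orthogonal decomposition $L = \ReOp{L} + \ii \ImOp{L}$ that is tailor-made for this bound. Fix an arbitrary $x \in \Hi$ with $\lvert x \rvert_\Hi = 1$. Then
\[
(Lx,x)_\Hi = (\ReOp{L}x,x)_\Hi + \ii (\ImOp{L}x,x)_\Hi,
\]
and since $\ReOp{L}$ and $\ImOp{L}$ are selfadjoint both summands are real, so they give precisely the real and imaginary part of $(Lx,x)_\Hi$.

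Next, I would extract the two pointwise estimates that the hypotheses hand us for free. The coercivity $\ReOp{L} \succeq m$ gives
\[
\Re (Lx,x)_\Hi = (\ReOp{L}x,x)_\Hi \geq m\,\lvert x\rvert_\Hi^2 = m > 0,
\]
so in particular $(Lx,x)_\Hi$ lies strictly in the open right half plane and $\arg (Lx,x)_\Hi$ is unambiguously defined in $(-\pi/2,\pi/2)$. For the imaginary part, $(\ImOp{L}x,x)_\Hi \in N(\ImOp{L})$, so by definition of the numerical radius
\[
\bigl\lvert \Im (Lx,x)_\Hi \bigr\rvert = \bigl\lvert (\ImOp{L}x,x)_\Hi \bigr\rvert \leq n(\ImOp{L}).
\]

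Combining these two inequalities and using that $\tan$ is strictly increasing on $[0,\pi/2)$, I would conclude
\[
\bigl\lvert \tan \arg (Lx,x)_\Hi \bigr\rvert = \frac{\bigl\lvert \Im (Lx,x)_\Hi \bigr\rvert}{\Re (Lx,x)_\Hi} \leq \frac{n(\ImOp{L})}{m} = \tan(\alpha),
\]
whence $\lvert \arg (Lx,x)_\Hi \rvert \leq \alpha$ and $(Lx,x)_\Hi \in \Sigma_\alpha$. As $x$ was arbitrary on the unit sphere of $\Hi$, this gives $N(L) \subseteq \Sigma_\alpha$.

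I do not really expect an obstacle here: the whole argument is driven by the fact that $\ReOp{L}$ and $\ImOp{L}$ individually control the real and imaginary parts of $(Lx,x)_\Hi$, and the coercivity condition together with the definition of $n(\ImOp{L})$ pin down these two parts separately in exactly the way that $\tan(\alpha) = n(\ImOp{L})/m$ is set up to capture. The only point that requires a brief word is checking $(Lx,x)_\Hi$ lies in the open right half plane so that taking $\arg$ and $\tan$ causes no ambiguity, which is immediate from $m > 0$.
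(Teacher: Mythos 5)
Your proof is correct and takes essentially the same approach as the paper's: bound $\lvert \Im (Lx,x)_\Hi \rvert$ by $n(\ImOp{L})$, use coercivity to bound $\Re (Lx,x)_\Hi$ from below by $m$, and divide. You spell out the decomposition and the well-definedness of $\arg$ slightly more explicitly than the paper does, but the argument is the same.
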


\begin{proof}
  Let $x \in \Hi$ with $\lvert x \rvert_{\Hi} =1$. We have with the coercivity condition
  \begin{equation*}
    \bigl\lvert \Im (Lx,x)_{\Hi} \bigr\rvert = \bigl\lvert (\ImOp{L} x,x)_{\Hi} \bigr\rvert \leq n(\ImOp{L}) \leq \frac{n(\ImOp{L})}{m} \Re (Lx,x)_{\Hi}.
  \end{equation*}
  Hence, $\lvert\arg (Lx,x)_{\Hi} \rvert \leq \arctan(\tfrac{n(\ImOp{L})}{m}) = \alpha$ and
  $N(L) \subseteq \Sigma_\alpha$ follows.
\end{proof}

\begin{remark} A few comments on Lemma~\ref{lem:numrange-newer}:
  \begin{enumerate}[(i)]
  \item Let us recall that since $\ImOp{L}$ is selfadjoint, $n(\ImOp{L}) = \lVert \ImOp{L} \rVert_{H\to H}$. If
    $L$ itself is selfadjoint, then $N(L) \subseteq \R_+ = \Sigma_0$ which is faithfully reproduced by
    $N(L) \subseteq \Sigma_\alpha$ in Lemma~\ref{lem:numrange-newer}, since then $\ImOp{L} = 0$ and so
    $\alpha = 0$.
  \item A variant of Lemma~\ref{lem:numrange-newer} is the main result in~\cite{Linke}. There, it is shown
    that \begin{equation*} N(L) \subseteq \Sigma_{\bar\alpha} \qquad \text{with} \qquad \bar \alpha =
      \arctan\sqrt{\frac{\lVert L\rVert_{\Hi \to \Hi}^2}{m^2} -1}.
    \end{equation*}
    In fact, this follows from the estimate in Lemma~\ref{lem:numrange-newer}, since
  \begin{equation*}
    \bigl\lvert (\ImOp{L}x,x)_H \bigr\rvert^2 = \bigl\lvert \Im (Lx,x)_H \bigr\rvert^2 \leq
    \bigl\lvert (Lx,x)_H \bigr\rvert^2 - m^2,
  \end{equation*}
  so $n(\ImOp{L})^2 \leq n(L)^2 - m^2 \leq \lVert L\rVert_{\Hi\to\Hi} - m^2$ and thus $\alpha \leq \bar\alpha$.
  However, there are interesting cases where the angle $\alpha$ is indeed substantially smaller
    than $\bar\alpha$, for example when $L$ is nearly selfadjoint but with large numerical radius.
    The angles $\alpha$ and $\bar\alpha$ coincide
    whenever
    the numerical radius of $L$ is attained on $[\Re z = m]$.
  \end{enumerate}
\end{remark}

\begin{remark}\label{rem:geo-proof}
  An alternative, geometric proof of Lemma~\ref{lem:numrange-newer} would go as follows: It is easy to see that $N(L) 
  \subseteq N(\ReOp{L}) + \ii N(\ImOp{L})$.
  Thus, with the coercivity assumption,
    \begin{align}
      \overline{N(L)} &  \subseteq \Bigl\{ z = a + \ii b \in \C\colon m \leq a
      \leq n(\ReOp{L}),~ b \in N(\ImOp{L})\Bigr\} \cap \overline{B(0,n(L))}. \label{eq:halfmoonincl}
    \end{align}
  The rays in the complex plane starting from $0$ with opening angle $\pm \alpha$ pass precisely through
  $m \pm \ii n(\ImOp{L})$, and at least one of those points is a left corner point
  of the set on the right in~\eqref{eq:halfmoonincl}. Thus $\Sigma_\alpha$ is the minimal sector which contains that
  set, which in turn contains $N(L)$.
\end{remark}

\begin{remark}\label{rem:sharpness} We discuss the quality and sharpness of the $N(L) \subseteq \Sigma_\alpha$ sector inclusion
  estimate in Lemma~\ref{lem:numrange-newer}:
  \begin{enumerate}[(i)]
    \item The sector inclusion can still be rather crude. For example, consider the operator $L$ induced on
      $\C^2$ by a $2 \times 2$ diagonal matrix with $m=1$ and $10+\ii$ on the diagonal. Then the numerical
      range $N(L)$ consists of the line between $1$ and $10+\ii$ in the complex plane; accordingly, the
      optimal sector including $N(L)$ would be of opening half angle $\arctan(\frac1{10})$ which is quite
      small, however, $\alpha = \arctan 1 = \frac\pi4$.
    \item On the other hand, in Remark~\ref{rem:geo-proof} we have seen that one can determine the angle
      $\alpha$ as the one for which the rays in the complex plane starting from $0$ with opening angles
      $\pm\alpha$ pass through $m\pm\ii n(\ImOp{L})$. We thus see that $\alpha$ is sharp precisely when
      $z = m+\ii n(\ImOp{L})$ or its complex conjugate $\bar z$ is a boundary point of $N(L)$. In fact,
      there is a direct link between sharpness of $\alpha$ and $z$ or $\bar z$ being an eigenvalue of $L$
      which becomes an equivalence if $N(L)$ is closed. (In particular, if $H$ is finite-dimensional.)
      By symmetry, it is sufficient to argue for $z$ only. Indeed, if $z \in N(L)$, then $z$ is also a
      corner point of that set. So when $z$ is an eigenvalue of $L$, then $\alpha$ is sharp, because
      eigenvalues are contained in the numerical range. Vice versa, if the angle is sharp \emph{and}
      $z \in N(L)$ instead of merely $z \in \overline{N(L)}$, then $z$ must be a corner of $N(L)$ and from
      that it follows that it is an eigenvalue of $L$~(\cite[Thm.~1]{Donoghue}). This situation occurs
      precisely for Ornstein-Uhlenbeck operators~\cite[Rem.~2]{ChillEtAl}.
    \item In fact, let us note that the possible shapes of the numerical range are rather well understood
      for operators on finite-dimensional spaces $\C^d$, in particular so for $d=2,3$. We refer to the
      classical work~\cite{kippenhahn}, see also~\cite{keeler}. For $d=2$, $\alpha$ can only be sharp if
      $L$ is normal. Indeed, if $L$ is not normal then $N(L)$ is a nondegenerate ellipse or a ball and the
      eigenvalues of $L$ are in the interior which contradicts the general necessary condition that $z$ or
      $\bar z$ is an eigenvalue. (The numerical range of a finite-dimensional operator is always closed.)
      Similarly, for $d=3$, the angle $\alpha$ can only be sharp when the matrix $A$ associated to $L$ is
      unitarily equivalent to a matrix of the form
      $\left(\begin{smallmatrix}a & 0 \\ 0 & \cA\end{smallmatrix}\right)$ with $a \in \C$ and
      $\cA \in \C^{2\times 2}$, where $a$ is either $z$ or $\bar z$.
    \end{enumerate}
\end{remark}

From Lemma~\ref{lem:numrange-newer} we see that a coercive bounded operator on a Hilbert space $\Hi$ is \emph{sectorial-valued} in the
sense that its numerical range is contained in a sector. It
will be useful to define the \emph{optimal} angle for a sector containing the numerical range of a
sectorial-valued (possibly unbounded) operator $L$:
\begin{equation*}
  \omega(L) \coloneq\inf\Bigl\{ \theta \in [0,\pi) \colon N(L) \subseteq \Sigma_\theta\Bigr\}.
\end{equation*}
We will stick with the notation $\omega$ for optimal angles of various kinds.
Let further $(\cL_x)$ be a family of bounded, sectorial-valued operators on a Hilbert space indexed by
$x$. Then we say that $(\cL_x)$ is \emph{uniformly} sectorial-valued if
\begin{equation*}
  \omega(\cL) \coloneqq \sup_{x} \omega(\cL_x) < \infty.
\end{equation*}
We next apply Lemma~\ref{lem:numrange-newer} to determine a sector which includes the numerical range of a
\emph{form}. Therefor, we recall that if $\mathfrak a$ is a sesquilinear form in $\Hi$, then
its numerical range is given by
\begin{equation*}
  N(\mathfrak a) \coloneqq \Bigl\{\mathfrak a(x,x) \colon x \in \dom(\mathfrak a),~\abs{x}_H = 1 \Bigr\}.
\end{equation*}
We say that $\mathfrak a$ is a \emph{sectorial form} if there is $\theta \in [0,\pi)$ such that $N(\mathfrak a) \subseteq
\Sigma_\theta$. Of course we also set
\begin{equation} \label{eq:defwinkelform}
  \omega(\mathfrak a) \coloneqq \inf\Bigl\{ \theta \in [0,\pi) \colon N(\mathfrak a) \subseteq \Sigma_\theta\Bigr\}
\end{equation}
The particular form that we consider will be that defining an elliptic operator in divergence
form.

\begin{theorem}\label{thm:form}
  Let $\Omega \subseteq \R^d$ be any open set and let $\mu\colon \Omega \to \cL(\C^{d})$ be bounded and Lebesgue measurable. Let
  \begin{equation}\label{eq:diriform} \ft \colon W^{1,2}(\Omega) \times W^{1,2}(\Omega) \ni (u,v) \mapsto \int_\Omega \bigl(
    \mu\nabla u , \nabla v \bigr) \dd x
  \end{equation} be the Dirichlet form on $L^2(\Omega)$ with $\dom(\ft) = W^{1,2}(\Omega)$.
  \begin{enumerate}[(i)]
    \item Suppose that $\mu$ is uniformly sectorial-valued a.e.\ on $\Omega$ with
      $\omega(\mu) \leq \sfrac\pi2$. Then $\ft$ is sectorial and $N(\ft) \subset \Sigma_{\omega(\mu)}$. In particular,
      $\omega(\ft) \leq \omega(\mu)$.\label{item:t-form-form}
    \item Suppose that $\ReOp{\mu(x)} \succeq m(x) > 0$ for almost every $x \in \Omega$, and that
      \begin{equation}\label{eq:locCoercitiv} \tan(\alpha) \coloneqq \esssup_{x \in \Omega} \frac
        {n(\ImOp{\mu(x))}}{m(x)} < \infty.
      \end{equation}
      Then $\mu$ is uniformly sectorial-valued almost everywhere on $\Omega$ with
      $\omega(\mu) \leq \alpha<\sfrac\pi2$.\label{item:t-form-mu}
    \end{enumerate}\end{theorem}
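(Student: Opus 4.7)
My plan is to handle the two parts separately. For part~(ii), the result is essentially a pointwise application of Lemma~\ref{lem:numrange-newer}: for almost every $x \in \Omega$, $\mu(x) \in \cL(\C^d)$ is a bounded operator on the finite-dimensional Hilbert space $\C^d$ with $\ReOp{\mu(x)} \succeq m(x) > 0$, so the lemma yields $N(\mu(x)) \subseteq \Sigma_{\alpha(x)}$ with $\tan \alpha(x) = n(\ImOp{\mu(x)})/m(x)$. By monotonicity of $\arctan$ and the defining property of the essential supremum in~\eqref{eq:locCoercitiv}, $\alpha(x) \leq \alpha$ for almost every $x$, so $\omega(\mu(x)) \leq \alpha$ a.e.\ and therefore $\omega(\mu) \leq \alpha < \sfrac\pi2$.

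For part~(i), given $u \in W^{1,2}(\Omega)$ with $\abs{u}_{L^2(\Omega)} = 1$, I first establish that for almost every $x$ the integrand $f(x) \coloneqq (\mu(x) \nabla u(x), \nabla u(x))$ lies in $\Sigma_{\omega(\mu)}$: if $\nabla u(x) = 0$ this is trivial, and otherwise I factor $f(x) = \abs{\nabla u(x)}^2 \cdot (\mu(x) e(x), e(x))$ with $e(x) \coloneqq \nabla u(x)/\abs{\nabla u(x)}$ a unit vector, so $f(x)$ is a nonnegative multiple of an element of $N(\mu(x)) \subseteq \Sigma_{\omega(\mu)}$ and therefore lies in the sector since the latter is a cone through the origin. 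It remains to transfer this pointwise cone membership to $\ft(u,u) = \int_\Omega f \dd x$.

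This transfer is the only substantive step of the proof, and it is where the hypothesis $\omega(\mu) \leq \sfrac\pi2$ enters essentially: precisely under this condition is $\Sigma_{\omega(\mu)}$ a \emph{closed convex} cone in $\C$, whereas sectors of opening half-angle exceeding $\sfrac\pi2$ fail convexity and the argument genuinely breaks. I will express $\Sigma_{\omega(\mu)}$ as an intersection of closed half-planes through the origin, $\Sigma_{\omega(\mu)} = \bigcap_j \{z \in \C : \Re(\overline{\zeta_j}\,z) \geq 0\}$ for suitable $\zeta_j \in \C$, so that the desired inclusion $\ft(u,u) \in \Sigma_{\omega(\mu)}$ reduces to the scalar inequalities $\Re\bigl(\overline{\zeta_j}\int_\Omega f \dd x\bigr) = \int_\Omega \Re(\overline{\zeta_j} f) \dd x \geq 0$, each of which holds because its integrand is nonnegative almost everywhere by the preceding step. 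This yields $N(\ft) \subseteq \Sigma_{\omega(\mu)}$ and hence $\omega(\ft) \leq \omega(\mu)$; moreover, the argument applies verbatim to the restriction $\ft_V$ on any closed subspace $V \subseteq W^{1,2}(\Omega)$, since the pointwise step is unaffected by the choice of form domain.
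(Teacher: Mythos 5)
Your proof is correct and follows essentially the same route as the paper: both parts reduce to the pointwise sector/cone membership of the integrand $(\mu(x)\nabla u(x),\nabla u(x))$ and then pass to the integral by linearity, with part~(ii) being a direct pointwise application of Lemma~\ref{lem:numrange-newer}. The only cosmetic difference is in the transfer step of part~(i): the paper writes the sector inclusion as the single inequality $\lvert\Im z\rvert \le \tan(\omega(\mu))\,\Re z$ and integrates that, whereas you decompose $\Sigma_{\omega(\mu)}$ into an intersection of half-planes $\{\Re(\overline{\zeta_j}z)\ge 0\}$ and integrate each scalar inequality; the two formulations are interchangeable, though your version has the slight advantage of handling the borderline case $\omega(\mu)=\sfrac\pi2$ without having to special-case the singularity of $\tan$.
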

\begin{proof}
  For~\ref{item:t-form-form}, we estimate pointwisely. By assumption, there is a subset
  $\Lambda \subseteq \Omega$ with $\lvert \Omega\setminus\Lambda\rvert = 0$ such that
  \begin{equation*}
    \abs[\big]{\Im \bigl(\mu(x)\xi,\xi\bigr)} \leq \tan(\omega(\mu)) \Re \bigl(\mu(x)\xi,\xi\bigr) \qquad (x \in \Lambda,~\xi \in \C^d).
  \end{equation*}
  Thus,
  \begin{align*}
    \bigl\lvert \Im \mathfrak{t}(u,u)\bigr\rvert &\leq
    \int_\Omega
    \bigl\lvert \Im \bigl( \mu (x) \nabla u(x) , \nabla u(x) \bigr)\bigr\rvert \dd x \\ &\le
    \tan(\omega(\mu))  \int_\Omega \Re \bigl( \mu (x) \nabla u(x) , \nabla u(x) \bigr)\dd x =
    \tan(\omega(\mu)) \Re \mathfrak{t}(u,u),
  \end{align*}
  so that $N(\ft) \subseteq \Sigma_{\omega(\mu)}$.

  Assertion~\ref{item:t-form-mu} follows directly from Lemma~\ref{lem:numrange-newer} applied to $L
  \coloneqq \mu(x)$.
\end{proof}

Some further comments on Theorem~\ref{thm:form} are in order.

\begin{remark}\label{rem:some-sector-rules}
  \begin{enumerate}[(i)]
    \item It is worthwhile to observe that the value of $\alpha$ in~\eqref{eq:locCoercitiv} is invariant under a
      multiplicative scalar perturbation $\rho$ of the coefficient matrix $\mu$, where $\rho$ is a real, bounded
      and strictly positive function on $\Omega$. Such a uniform sectoriality property is often of particular
      interest for nonautonomous parabolic or quasilinear problems, see for example \emph{Hypothesis~1} in
      the seminal work~\cite{AT87}, or also~\cite[Thm.~6.1]{Grisvard}.
    \item Due to the assumption that~\eqref{eq:locCoercitiv} is bounded, no uniform ellipticity constant for
      $\mu$ is required in Theorem~\ref{thm:form}. However, if the pointwise ellipticity constants $m(x)$
      degenerate $\downarrow 0$ over $\Omega$, i.e., there is no common positive lower bound, then the form
      $\ft$ is in general \emph{not} closed when considered as a form in $L^2(\Omega)$ with
      $\dom(\ft) = W^{1,2}(\Omega)$, but at best closable~\cite[Rem.~6.4]{kufner}. See
      also~\cite[Ch.~VI.§1.3/4]{kato}.
      While it is known that if the numerical range of $\ft$ is contained in a given sector, then so is the
      numerical range of the closure of the form $\ft$, we concede that dealing with the closure of the
      form causes several technical issues down the line which obfuscate the matter at hand. For this
      reason, we will avoid this case and soon require that there is uniform lower bound
      $m(x) \geq m_\bullet > 0$ a.e.\ on $\Omega$. Then~\eqref{eq:locCoercitiv} is always satisfied.
    \item In the situation of Theorem~\ref{thm:form}, it is clear that for any shift
      $\delta \in \Sigma_{\omega(\ft)}$ of $\ft$, in particular for $\delta \geq 0$, the numerical range of
      $\ft + \delta$ is still contained in $\Sigma_{\omega(\ft)}$.
    \item Let $\nu$ be a measure on $\Omega$ that is absolutely continuous with respect to the Lebesgue
      measure. Then it is easy to extend and state Theorem~\ref{thm:form} for the form
      $\ft$ defined on
      $W^{1,2}(\Omega;\nu)$ with
      \begin{equation*}
        W^{1,2}(\Omega;\nu) \coloneqq \Bigl\{ u \in W^{1,2}_{\mathrm{loc}}(\Omega) \colon u,\nabla u \in L^2(\Omega;\nu)\Bigr\}
      \end{equation*}
      and the Lebesgue-space $L^2(\Omega;\nu)$ with respect to $\nu$. Such a setting is often considered in the
      area of Dirichlet forms. It would work in particular for
      measures with a bounded and uniformly positive density with respect to the Lebesgue measure and
      allows to consider for example Ornstein-Uhlenbeck operators as the corresponding realizations in
      $L^2(\Omega;\nu)$. We refer to~\cite{ChillSector,ChillEtAl} where similar results to this paper are derived, and the
      references there, but do not pursue this particular
      avenue in this work.

      In the same vein, the authors are convinced that the foregoing may be generalized to forms which
      lead to elliptic \emph{systems}---as long as the coefficient function is (essentially) uniformly bounded and
      pointwise coercive. However, we do not go into details here.
    \end{enumerate}
\end{remark}

Moreover, clearly, the sectoriality of $\ft$ as in~\eqref{eq:diriform} is preserved when restricting that
form to closed subspaces $V$ of $W^{1,2}(\Omega)$. In particular, this applies to subspaces encoding Dirichlet
boundary conditions on (parts of) $\partial\Omega$ and to finite-dimensional subspaces, for example arising in
Galerkin methods, most prominently Finite Elements.

\begin{corollary}\label{cor:trivial}
  Let $V$ be any closed subspace of $W^{1,2}(\Omega)$ and denote the form $\ft$ restricted to $V$ by
  $\ft_V$. Then Theorem~\ref{thm:form} stays true for $\ft_V$. In particular, $\ft_V$ is a sectorial form
  with $\omega(\ft_V) \leq \omega(\ft) \leq \omega(\mu) \leq \alpha$, where $\alpha$ is as in~\eqref{eq:locCoercitiv}.\end{corollary}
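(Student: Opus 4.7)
The strategy is to exploit the inclusion $N(\ft_V) \subseteq N(\ft)$, which holds essentially by definition, and to transfer the conclusions of Theorem~\ref{thm:form} to the restricted form. Concretely, because $V$ is a closed subspace of $W^{1,2}(\Omega)$, any $u \in V$ with $\lvert u\rvert_{L^2(\Omega)} = 1$ is also an admissible test element for $\ft$ on $W^{1,2}(\Omega)$; hence
\begin{equation*}
  N(\ft_V) = \bigl\{\ft(u,u) \colon u \in V,~\lvert u\rvert_{L^2(\Omega)} = 1\bigr\} \subseteq N(\ft).
\end{equation*}

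From this trivial set-inclusion, every containment of $N(\ft)$ in a sector automatically passes to $N(\ft_V)$. In particular, under the assumption of part~\ref{item:t-form-form} of Theorem~\ref{thm:form}, we get $N(\ft_V) \subseteq N(\ft) \subseteq \Sigma_{\omega(\mu)}$, so $\ft_V$ is sectorial and $\omega(\ft_V) \leq \omega(\ft)$. Part~\ref{item:t-form-mu} is a pointwise statement about the coefficient matrix $\mu$ alone and therefore is unaffected by restricting the form; its conclusion $\omega(\mu) \leq \alpha$ is retained verbatim.

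Combining these observations yields the chain $\omega(\ft_V) \leq \omega(\ft) \leq \omega(\mu) \leq \alpha$ claimed in the corollary. There is no real obstacle to overcome; the only point worth noting is that the Hilbert-space normalization $\lvert u\rvert_{L^2(\Omega)} = 1$ used in the definition of the numerical range of a form is the same whether one works on $V$ or on $W^{1,2}(\Omega)$, so that the inclusion $N(\ft_V) \subseteq N(\ft)$ holds without any qualification.
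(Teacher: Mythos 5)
Your proof is correct and is exactly the argument the paper has in mind: the paper just says "clearly, the sectoriality of $\ft$ \ldots is preserved when restricting that form to closed subspaces $V$" and offers no further proof, and the mechanism is precisely your observation that $N(\ft_V)\subseteq N(\ft)$ because admissible test elements for $\ft_V$ are a subset of those for $\ft$, with the same $L^2(\Omega)$-normalization.
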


In a most useful turn of events, the numerical range inclusion for $\ft_V$ also transfers to the operator $A$ on
$L^2(\Omega)$ induced by $\ft_V$, and the associated versions on $L^p(\Omega)$. We define this
operator in the next section.

\section{Consequences for the operators induced by the form}

We proceed to collect several more intricate consequences of the sectoriality of $\ft$ as in
Theorem~\ref{thm:form} and of $\ft_V$ as in Corollary~\ref{cor:trivial} for any closed subspace $V$
of $W^{1,2}(\Omega)$. The setting and assertions of these results are thus taken for granted in the
following.

Armed with the knowledge that the Dirichlet forms $\ft_V$ are sectorial in the setting of
Theorem~\ref{thm:form} and Corollary~\ref{cor:trivial}, we recall that
\begin{equation*}
  \omega(\ft_V) = \inf\Bigl\{ \theta \in [0,\pi) \colon N(\ft_V) \subseteq \Sigma_\theta\Bigr\}.
\end{equation*}
We will neglect the parameter $V$ in $\omega_V$ if the context is clear, which we expect it to always
be. Clearly, $\alpha$ as in~\eqref{eq:locCoercitiv} is an upper bound for $\omega$ since it is an upper bound for
$\omega(\mu)$. In the following, we will derive several results and angles which will depend on these optimal
angles $\omega(\ft_V)$ for $\ft_V$ and $\omega(\mu)$ for $\mu$. This will give the most precise and sharpest results,
yet of course it is of importance to have a good upper bound like $\alpha$ at our disposal, for example when
we are interested in \emph{uniformity} of associated sectors. Such a situation was already mentioned in
Remark~\ref{rem:some-sector-rules}. In any of the following, we can readily substitute the upper bound
$\alpha$ for $\omega(\ft_V)$ and $\omega(\mu)$ to get a concrete upper bound of the desired quantity.

As already announced above, we will concentrate on the situation where the coefficient $\ReOp{\mu}$ is
uniformly positive definite a.e.\ on $\Omega$ and the form $\ft_V$ is closed; that is, we make for all what
follows, the following supposition:

\begin{assumption}[Standing assumptions]\label{a-lowerbound} In the context of Theorem~\ref{thm:form} and
  Corollary~\ref{cor:trivial}, we suppose the following assumptions to be valid from now on:
  \begin{enumerate}[(i)]
    \item The ellipticity constants $m(x)$ for the matrices $\mu(x)$ admit a \emph{uniform} positive lower
      bound $m(x) \geq m_\bullet > 0$ for almost all $x \in \Omega$, i.e., $\ReOp{\mu} \succeq m(x) \geq m_\bullet > 0$ uniformly on $\Omega$.
    \item The form domain $V$ under consideration is dense in $L^2(\Omega)$ and it is a \emph{closed} subspace
      of $W^{1,2}(\Omega)$.
    \end{enumerate}
\end{assumption}

Note that the assumption that $V$ is dense in $L^2(\Omega)$ is needed to make the following operator $A$
induced by $\ft_V$ on $L^2(\Omega)$ well defined, which will be the basis for all further
considerations:
\begin{equation}
  \begin{split}\dom(A) & \coloneqq \Bigl\{u \in V \colon \exists\, f \in L^2(\Omega) \colon \ft(u,v) = (f,v)_{L^2(\Omega)}~\text{for all}~v\in V\Bigr\} \\ Au &\coloneqq f.\end{split}\label{eq:L2-operator}
\end{equation}
It is quickly verified that $A$ is a closed operator when $V$ is a closed subspace of $W^{1,2}(\Omega)$ and
$\mu$ is uniformly elliptic, as we have assumed. Since by definition, $(Au,u)_{L^2(\Omega)} = \ft(u,u)$ for all
$u \in \dom(A) \subseteq V$, it is clear that $N(A + \delta) \subseteq N(\ft_V + \delta)$ for all
$\delta \geq 0$. In fact, $N(A+\delta)$ is dense in $N(\ft_V + \delta)$~(\cite[Cor.~VI.§2.2.3]{kato}), so we have:
\begin{equation}
  \label{eq:sector-form-and-operator}
  \omega(A + \delta) = \omega(\ft_V + \delta) \leq \omega(\mu) \leq \alpha \qquad (\delta \geq 0)
\end{equation}
with $\alpha$ as in~\eqref{eq:locCoercitiv}. We will write $\omega$ for $\omega(A) = \omega(\ft_V$) in the following.

The rest of this work
will be spent collecting and deriving a host of results which are related
to~\eqref{eq:sector-form-and-operator} and $\omega(A)$ in general. We
subdivide these by the following settings:
\begin{enumerate}[(i)]
\item Considering $A$ in $L^2(\Omega)$, where $\omega(A)$ directly determines the resulting
  quantities.
\item The realizations of $A$ in $L^p(\Omega)$:
  \begin{enumerate}[(a)]
  \item The general case, where the results are stated in some (explicit) dependence on $\omega(\mu)$ and $p$, and
  \item peculiar geometric situations which yield $p$-independent results, so the situation for $p=2$ is
    decisive.\end{enumerate}
\item Functional spaces other than $L^p(\Omega)$, in particular negative (dual) Sobolev spaces.
\end{enumerate}

We will thereby be mostly interested in the case where $V$ corresponds to a subspace of $W^{1,2}(\Omega)$ that
carries mixed boundary conditions in the usual sense. Since this will already be an comprehensive endeavor,
we will not consider the manifold possible extensions, for example considering Robin boundary conditions
or dynamical boundary conditions.

Before we go into details we re-establish some definitions and known abstract results which we employ
later. Several of these results can be found in the literature at one place or the other. However, we
strive for the simplest possible reasoning by collecting them at all at one place to see how the
individual results build upon each other and need very little extra heavy machinery to establish.


\subsection{Definitions and preliminary results}\label{sec:defin-prel-results}

As before, we use $\Sigma_\theta$ for the closed sector of semi-angle $\theta \in [0,\pi)$, and we denote its
interior by $\Sigma_\theta^\circ \coloneqq {(\Sigma_\theta)}^\circ$. It will be useful to have in mind that
\begin{equation*}
  \C\setminus\Sigma_\theta = - \Sigma_{\pi-\theta}^\circ.
\end{equation*}
Also, recall that the symbol $\rho(B)$
stands for the resolvent set of a linear operator $B$ in $X$, that is, the set of all $\lambda \in \C$ for
which $B-\lambda$ has a continuous inverse $X \to X$. Accordingly, $\sigma(B) \coloneq
\C\setminus\rho(B)$ is the spectrum of $B$.
\begin{definition}[Sectorial operator]\label{d-Winkelsec} Let $B$ be an operator in a Banach space $X$.
  We call $B$ \emph{sectorial} of angle $\phi \in [0,\pi)$ if $\sigma(B)\subseteq \Sigma_\phi$ and
  \begin{equation*}
    \sup_{\lambda \in
       \C\setminus \Sigma_\theta} \bigl\lVert\lambda(B - \lambda)^{-1} \bigr\rVert_{X\to X} < \infty.
  \end{equation*}
  Moreover, we denote by
  \begin{equation*}
    \phi(B) \coloneqq \inf \Bigl\{\phi \in [0,\pi) \colon B~\text{is sectorial of angle}~\phi \Bigr\}
  \end{equation*}
  the \emph{spectral angle} for the operator $B$.
\end{definition}

\begin{remark}\label{rem:sectorial-equivalent-real}
  A more convenient way of working with sectorial operators as before is by means of real resolvent
  points only. Indeed, it is well known that an operator $B$ is sectorial \emph{if and only if}
  $(-\infty,0) \subset \rho(B)$ and
  \begin{equation}\label{eq:resdecay}
    \sup_{t>0} \bigl\lVert t{(B + t)}^{-1} \bigr\rVert_{X\to X} < \infty.
  \end{equation}
\end{remark}

\begin{remark}\label{rem:sectorial-reflexive}
  Sectorial operators enjoy several useful properties \emph{out of the box}. Indeed, their resolvent set
  is nonempty, so they are automatically closed. Moreover, if a sectorial operator is injective, then its
  range is dense. Even more, if the underlying Banach space is \emph{reflexive}, then a sectorial
  operator is densely defined, and if its range is dense, then it is also injective. See for
  example~\cite[Theorem~3.8]{CowlingDoustMcIntoshYagi}.
\end{remark}

We next relate the sectoriality of an operator to the property of being a generator of an analytic
semigroup.

\begin{definition}[Bounded holomorphic/analytic, holomorphy angle]\label{d-Winkel} For a Banach space
  $X$, we say that a strongly continuous semigroup $T \colon [0,\infty) \to \mathcal L(X)$ is an \emph{holomorphic} or
  \emph{analytic semigroup of angle $\vartheta > 0$} if $T$ has a holomorphic extension to the sector
  $\Sigma_\vartheta$ which is bounded on $\Sigma_\theta \cap \{z \in \C \colon |z| \leq 1\}$ for every
  $\theta \in (0,\vartheta)$. In addition, $T$ is \emph{bounded holomorphic of sector $\vartheta$} if $T$
  is bounded on every sector $\Sigma_\theta$ for $\theta \in (0,\vartheta)$. The angle $\vartheta$ is
  called the \emph{holomorphy angle} for the semigroup $T$.
\end{definition}

Sectorial operators of spectral angle $< \sfrac\pi2$ are precisely the (negative) generators of bounded
holomorphic semigroups, and there is a direct relation between the spectral angle and the holomorphy
angle of the semigroup, see~\cite[Thm.~3.7.11]{Batty} or~\cite[Thm.~1.45]{Ouhab}. In fact, the spectral
angle $\phi < \sfrac\pi2$ of a sectorial operator $B$ is in one-to-one correspondence with the semi-angles
$\vartheta$ of sectors on which the semigroup $T$, generated by $-B$, is holomorphic by
$\vartheta + \phi = \sfrac\pi2$, and one is enabled to convert every statement on the spectral angle below
into a statement on the angle of holomorphy for the corresponding semigroup and vice versa. In
particular, we have $\tan(\phi) = \cot(\vartheta)$. Without loss of generality, we choose to concentrate on
the spectral angle in the following.

We introduce one more property of (generators of) holomorphic semigroups that is one of the most
desirable properties to be inferred from the results collected throughout this work.

\begin{definition}[Maximal parabolic regularity]\label{def:mpr}
  Let $X$ be a Banach space and let $B$ be the negative generator of a holomorphic semigroup $S$ on $X$. For $T \in
  (0,\infty]$ and $q \in (1,\infty)$, we say that $B$ satisfies \emph{maximal parabolic regularity} on $X$, if for
  every $f \in L^q(0,T;X)$, the unique mild solution
  \begin{equation*}
    u(t) \coloneqq \int_0^t S(t-s)f(s) \dd s \qquad (0 < t < T)
  \end{equation*}
  to the Cauchy problem
  \begin{equation*}
    u'(t) + Bu(t) = f(t) \qquad (0 < t < T), \qquad u(0) = 0
  \end{equation*}
  satisfies $u(t) \in \dom(B)$ for almost all $t \in (0,T)$, and $u', Bu \in L^q(0,T;X)$.
\end{definition}

It is no restriction to suppose that $B$ is the negative operator of a holomorphic semigroup in
Definition~\ref{def:mpr}, since it is a necessary condition for maximal parabolic regularity. We also
mention that maximal parabolic regularity is independent of $q \in (1,\infty)$, and of $T$, if finite. For
finite $T$, we can also rescale $u$ exponentially to see that $B$ satisfies maximal parabolic regularity
if and only if $B+\delta$ does so for any $\delta \in \R$, say. Finally,
if $X$ is a Hilbert space, than \emph{any} negative generator of a holomorphic semigroup satisfies
maximal parabolic regularity. See~\cite{Dore,weis} and~\cite[Ch.~III]{Amann95}. The latter also offers a
great point of view why maximal parabolic regularity is a highly desirable property for the treatment of
nonlinear problems, cf.\ also~\cite{Pruess}.

Now, the aim of the next considerations is to introduce instruments that allow one to deduce sectoriality
and more from properties of the numerical range of the operator in question. For convenience,
we state the following directly for Banach spaces for which we recall the definition of the numerical
range and, associated, the duality mapping first.

Let $X$ be a Banach space and let $J \colon X \rightrightarrows X^*$ be its duality mapping, that is, the
\emph{a priori} set-valued mapping
\begin{equation*}
  J(u) \coloneqq \Bigl\{u^* \in X^* \colon u^*(u) = \|u\|_X,~\|u^*\|_{X^*} = 1\Bigr\}.
\end{equation*}
By the Hahn-Banach theorem, $J(u) \neq \emptyset$ for every $u \in X$. If $X^*$ is strictly convex, then
$J(u)$ is in fact single valued. We will always be in this situation when we use the numerical range
below for the family $X = L^p(\Omega)$ in the reflexive range $1<p<\infty$.

Let $B$ be a closed operator in the Banach space $X$. Then the \emph{numerical range} of $B$ is given by
\[
  N(B) \coloneqq \Bigl\{ u^*(Bu) \colon u \in \dom(B ),~\|u \|_X=1,~u^* \in J(u)\Bigr\}.
\]
This definition coincides with the one given before if $X$ is a Hilbert space.

\begin{definition}[Accretive]\label{def:accretive-etc}
  Let $B$ be a closed operator in the Banach space $X$. We say that $B$ is \emph{accretive} if
  $N(B) \subseteq [\Re z \geq 0] = \Sigma_{\frac\pi2}$. Further, $B$ is \emph{$m$-accretive} if it is accretive and $-1 \in
  \rho(B)$. Finally, we call $B$ \emph{$m$-$\theta$-accretive} for $\theta \in [0,\sfrac\pi2]$ if it is $m$-accretive and
  in fact $N(B) \subseteq \Sigma_\theta$.
\end{definition}

We remark that it is well known that an accretive operator $B$ is $m$-accretive if and only if there is
\emph{any} $\lambda$ with $\Re \lambda < 0$ and $\lambda \in \rho(B)$. (It can also be deduced from the proof of
Corollary~\ref{cor:sector-resolvent-general} right below.) The $-1$ in Definition~\ref{def:accretive-etc} is just for
convenience. By the Lumer-Philips theorem, $m$-accretive operators are precisely the negative generators of
semigroups of contractions~\cite[Theorem~3.4.5]{Batty}, and $m$-$\theta$-accretive operators correspond to
\emph{holomorphic} semigroups of contractions. We say that $T$ is a semigroup of contractions if
$\norm{T(t)}_X \leq 1$ for all $t \geq 0$.

In the following, we leverage a numerical range sector inclusion using the following fundamental and well-known
proposition which gives decay of the resolvent in relation to distance to the numerical
range. See~\cite[Prop.~1.3.1]{MarSa01}, compare also~\cite[Thm.~1.3.9]{pazy}. For context for the
following estimate, note that, given a set $\mathcal O \subseteq \C$ and
$\lambda \in \C\setminus \overline{\mathcal O}$, we have
$\sup_{z \in \mathcal O} \abs{\frac1{z-\lambda}} = \frac1{\dist(\lambda,\mathcal O)}$.

\begin{proposition}\label{p-resolvDECAY}
  Let $B$ be an operator in a Banach space $X$. Let $U$ be a connected component of
  $\C \setminus \overline{N(B)}$. If $U \cap \rho(B) \neq \emptyset$, then $U \subseteq \rho(B)$, and
  \[
    \bigl\lVert(B - \lambda)^{-1}\bigr\rVert_{X\to X} \leq \frac{1}{\dist(\lambda, N(B))} \qquad (\lambda \in U).
  \]
\end{proposition}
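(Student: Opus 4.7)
The plan is to establish the proposition in two independent stages: first, a pointwise a priori estimate of the form $\lVert (B-\lambda)x \rVert_X \geq \dist(\lambda, N(B)) \lVert x \rVert_X$, which automatically yields the claimed bound on $(B-\lambda)^{-1}$ wherever the inverse exists; and second, a connectedness argument which uses this uniform control to spread the hypothesis $U \cap \rho(B) \neq \emptyset$ to the whole component $U$.

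For the a priori estimate I would fix $\lambda \in \C \setminus \overline{N(B)}$ and $x \in \dom(B)$ with $\lVert x \rVert_X = 1$, and pick some $x^* \in J(x)$. Because $x^*(Bx) \in N(B)$ by definition of the numerical range, I get
\[
  \lvert x^*((B-\lambda)x) \rvert = \lvert x^*(Bx) - \lambda \rvert \geq \dist(\lambda, N(B)).
\]
Since $\lVert x^* \rVert_{X^*} = 1$, the left-hand side is at most $\lVert (B-\lambda)x \rVert_X$, and by homogeneity I obtain $\lVert(B-\lambda)x\rVert_X \geq \dist(\lambda, N(B)) \lVert x \rVert_X$ for every $x \in \dom(B)$. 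This forces injectivity of $B-\lambda$ throughout $U$ and, once surjectivity is known, delivers exactly the bound claimed in the proposition.

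For $U \subseteq \rho(B)$, I would set $\Omega := U \cap \rho(B)$ and show that $\Omega$ is clopen in $U$. Openness is immediate from openness of $\rho(B)$. For closedness, let $\lambda_n \in \Omega$ converge to some $\lambda \in U$. Continuity of $\dist(\cdot, N(B))$ and the fact that $U \cap \overline{N(B)} = \emptyset$ imply $\dist(\lambda_n, N(B))$ stays bounded below by a positive constant, so the a priori estimate yields a uniform bound $\lVert (B-\lambda_n)^{-1}\rVert_{X \to X} \leq M$ for large $n$. The standard Neumann series perturbation then places every $\mu$ with $\lvert \mu - \lambda_n \rvert < 1/M$ in $\rho(B)$; choosing $n$ large so that $\lvert \lambda - \lambda_n \rvert < 1/M$ gives $\lambda \in \rho(B)$. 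Thus $\Omega$ is clopen in $U$, nonempty by hypothesis, and $U$ is connected, hence $\Omega = U$.

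The only genuine obstacle is the closedness step in the connectedness argument: one has to convert uniform resolvent bounds along a sequence into invertibility at the limit. The Neumann perturbation handles this cleanly, requiring only the geometric-series construction of the inverse. Once this is in place, no further abstract input is needed and the resolvent norm estimate on $U$ is simply a reading of the a priori inequality for any $\lambda \in U$.
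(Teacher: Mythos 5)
Your argument is correct and is essentially the standard textbook proof of this statement; the paper itself does not give a proof but simply cites it (to Mart\'inez--Sanz, Prop.~1.3.1, cf.\ also Pazy, Thm.~1.3.9), and the cited sources proceed exactly as you do: establish the lower bound $\lVert(B-\lambda)x\rVert_X \geq \dist(\lambda,N(B))\lVert x\rVert_X$ via the duality pairing, then run a clopen/connectedness argument using the uniform resolvent bound and the Neumann-series expansion of the resolvent. One small remark on hygiene: the hypothesis $U\cap\rho(B)\neq\emptyset$ implicitly forces $B$ to be closed (otherwise $\rho(B)=\emptyset$), which is what justifies speaking of $(B-\lambda)^{-1}$ as a bounded everywhere-defined operator at each stage; your Neumann-series step uses this tacitly, and it is fine.
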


In the present case, we will be able to invest that the numerical range of $B$ is contained in a sector,
and indeed Proposition~\ref{p-resolvDECAY} then implies the sectoriality of $B$, and quite a bit more,
provided that at least one resolvent point exists in the left half-plane:

\begin{corollary}\label{cor:sector-resolvent-general}
  Let $B$ be an operator in a Banach space $X$ and let $\theta \in [0,\sfrac{\pi}{2}]$. Suppose that
  $B$ is $m$-$\theta$-accretive. Then we have the
  following.
  \begin{enumerate}[(i)]%
  \item\label{cor:sector-resolvent-general-1} The numerical range sector contains the spectrum:
    $\sigma(B) \subset \Sigma_\theta$.
  \item\label{cor:sector-resolvent-general-1.5} If $\lambda \in
    \C\setminus\Sigma_\theta$, then
    $\lVert{(B - \lambda)}^{-1}\rVert_{X\to X} \leq \frac1{\dist(\lambda, \Sigma_\theta)}$.
  \item\label{cor:sector-resolvent-general-2} For $\theta = \sfrac\pi2$, we have
    \begin{equation*}
      \bigl\lVert{(B - \lambda )}^{-1}\bigr\rVert_{X\to X} \leq \frac{1}{\abs{\Re \lambda}} \qquad
      (\Re \lambda < 0).
    \end{equation*}
    Otherwise, for every $\vartheta \in (\theta,\sfrac\pi2]$,
    \begin{equation}\label{eq:sinexpress098}
      \bigl\lVert\lambda{(B - \lambda )}^{-1}\bigr\rVert_{X\to X} \leq \frac{1}{\sin 
        (\vartheta-\theta)} 
      \qquad
      (\lambda \in 
      \C \setminus \Sigma_{\vartheta}).
    \end{equation}
  \item\label{cor:sector-resolvent-general-3} The operator $B$ is sectorial with spectral angle
    $\phi(B) \le \theta$.
  \item\label{cor:sector-resolvent-general-contraction} The operator $B$ is the negative generator of a
    semigroup of contractions. If $\theta < \sfrac\pi2$, then the semigroup is also a holomorphic semigroup of
    contractions on $\Sigma_{\frac\pi2-\theta}$.
  \end{enumerate}
\end{corollary}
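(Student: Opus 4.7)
My plan is to derive everything from Proposition~\ref{p-resolvDECAY} applied to the single connected component $U = \C \setminus \Sigma_\theta$ of the complement of the numerical range sector, together with elementary trigonometry and the Lumer-Phillips theorem.

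For parts~\ref{cor:sector-resolvent-general-1} and~\ref{cor:sector-resolvent-general-1.5}, note that for $\theta \in [0, \sfrac{\pi}{2}]$ the set $\C \setminus \Sigma_\theta$ is connected (it is in fact $-\Sigma_{\pi-\theta}^\circ$ by the identity recorded at the start of Section~\ref{sec:defin-prel-results}, and $\pi - \theta \geq \sfrac{\pi}{2}$ so this open sector is a connected set). Since $B$ is $m$-$\theta$-accretive we have $-1 \in \rho(B)$ and $-1 \notin \Sigma_\theta$, so Proposition~\ref{p-resolvDECAY} applied to $U = \C \setminus \Sigma_\theta \supseteq \C \setminus \overline{N(B)}$ yields $U \subseteq \rho(B)$, which is~\ref{cor:sector-resolvent-general-1}, together with the resolvent bound in~\ref{cor:sector-resolvent-general-1.5} since $\dist(\lambda, N(B)) \geq \dist(\lambda, \Sigma_\theta)$.

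For~\ref{cor:sector-resolvent-general-2} I would compute $\dist(\lambda, \Sigma_\theta)$ explicitly. When $\theta = \sfrac{\pi}{2}$ the sector is the closed right half-plane, so for $\Re\lambda < 0$ the nearest point of $\Sigma_{\pi/2}$ to $\lambda$ is $\ii \Im \lambda$ and $\dist(\lambda, \Sigma_{\pi/2}) = \abs{\Re\lambda}$; plugging into~\ref{cor:sector-resolvent-general-1.5} yields the first bound. For general $\theta < \sfrac{\pi}{2}$ and $\lambda \in \C \setminus \Sigma_\vartheta$ with $\vartheta \in (\theta, \sfrac{\pi}{2}]$, an elementary trigonometric computation (drop a perpendicular from $\lambda$ onto the nearer bounding ray of $\Sigma_\theta$) shows
\begin{equation*}
  \dist(\lambda, \Sigma_\theta) \geq \abs{\lambda} \sin(\abs{\arg \lambda} - \theta) \geq \abs{\lambda} \sin(\vartheta - \theta),
\end{equation*}
which, combined with~\ref{cor:sector-resolvent-general-1.5}, gives~\eqref{eq:sinexpress098}. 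Assertion~\ref{cor:sector-resolvent-general-3} is then immediate from~\ref{cor:sector-resolvent-general-1} and~\eqref{eq:sinexpress098} by Definition~\ref{d-Winkelsec} and Remark~\ref{rem:sectorial-equivalent-real}, giving $\phi(B) \leq \theta$.

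Finally, for~\ref{cor:sector-resolvent-general-contraction}: since $B$ is accretive and $-1 \in \rho(B)$ (equivalently, the range condition of Lumer-Phillips is satisfied on a half-plane, via the bound already obtained in~\ref{cor:sector-resolvent-general-2}), $-B$ generates a contraction semigroup on $X$. If $\theta < \sfrac{\pi}{2}$, the bound~\eqref{eq:sinexpress098} with the choice $\vartheta = \sfrac{\pi}{2}$ gives $\|\lambda(B-\lambda)^{-1}\|_{X\to X} \leq 1/\cos\theta$ on the left half-plane, while for $\lambda \in -\Sigma_{\frac\pi2 - \theta}^\circ$ the distance estimate in~\ref{cor:sector-resolvent-general-1.5} in fact produces $\|\lambda(B-\lambda)^{-1}\|_{X\to X} \leq 1$. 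The standard semigroup correspondence between spectral and holomorphy angles mentioned after Definition~\ref{d-Winkel} then identifies the semigroup as holomorphic of angle $\sfrac{\pi}{2} - \theta$, and the contractivity estimate on the half-plane promotes to contractivity on $\Sigma_{\frac{\pi}{2}-\theta}$ by the Phragmen-Lindel\"of principle. The main subtlety is keeping the sharp constant $1$ in the contraction statement on the sector rather than the a~priori worse constant coming from the simple resolvent bound; this is where the exact distance computation in~\ref{cor:sector-resolvent-general-1.5}, not just the sectorial bound~\eqref{eq:sinexpress098}, is used.
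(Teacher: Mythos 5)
Your treatment of parts~\ref{cor:sector-resolvent-general-1},~\ref{cor:sector-resolvent-general-1.5} and~\ref{cor:sector-resolvent-general-3} matches the paper and is fine, but there are two genuine gaps, one in~\ref{cor:sector-resolvent-general-2} and one in~\ref{cor:sector-resolvent-general-contraction}.

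In part~\ref{cor:sector-resolvent-general-2} the inequality chain
$\dist(\lambda,\Sigma_\theta)\geq |\lambda|\sin(|\arg\lambda|-\theta)\geq |\lambda|\sin(\vartheta-\theta)$
is false as written. Dropping a perpendicular from $\lambda$ onto the nearer bounding ray of $\Sigma_\theta$ gives the exact distance only while the foot of that perpendicular lies on the ray, i.e.\ while $|\arg\lambda|\leq \sfrac\pi2+\theta$. Once $|\arg\lambda|>\sfrac\pi2+\theta$ the nearest point of $\Sigma_\theta$ is the vertex $0$, so $\dist(\lambda,\Sigma_\theta)=|\lambda|$; your first inequality still holds but becomes lossy, and the second inequality now fails because $\sin$ is decreasing past $\sfrac\pi2$. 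Concretely, take $\theta=0$, $\vartheta=\sfrac\pi2$, $\lambda=-1$: then $|\lambda|\sin(|\arg\lambda|-\theta)=\sin\pi=0$ while $|\lambda|\sin(\vartheta-\theta)=1$. The conclusion $\dist(\lambda,\Sigma_\theta)\geq |\lambda|\sin(\vartheta-\theta)$ is still true, but it needs the case split the paper makes: for $\lambda\in\C\setminus\Sigma_{\sfrac\pi2+\theta}$ argue $\dist(\lambda,\Sigma_\theta)=|\lambda|\geq |\lambda|\sin(\vartheta-\theta)$ directly, and only for $\lambda\in\Sigma_{\sfrac\pi2+\theta}\setminus\Sigma_\vartheta$ use the sine formula, where now $|\arg\lambda|-\theta\in(\vartheta-\theta,\sfrac\pi2]$ is in the increasing regime.

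In part~\ref{cor:sector-resolvent-general-contraction} the Phragm\'en--Lindel\"of step does not do what you want. You only know $\|S(t)\|\leq 1$ on the positive real axis plus a uniform bound on the sector; that is not enough to conclude $\|S(z)\|\leq 1$ on all of $\Sigma_{\sfrac\pi2-\theta}$. A maximum-principle argument needs control on the \emph{bounding rays} of the sector, which is exactly what is missing (a scalar holomorphic function bounded on a sector and bounded by $1$ on the positive real axis can still exceed $1$ off the axis, e.g.\ $z\mapsto 1/(1+(z-1)^2)$ near $z=1$ in a thin sector). The paper avoids this by rotating the operator: for each $\varphi$ with $|\varphi|\leq\sfrac\pi2-\theta$, the operator $e^{\ii\varphi}B$ has numerical range in $\Sigma_{\theta+|\varphi|}\subseteq\Sigma_{\sfrac\pi2}$ and is still $m$-accretive, so Lumer--Phillips applied to $e^{\ii\varphi}B$ gives that $t\mapsto S(te^{\ii\varphi})$ is a contraction semigroup, hence $\|S(z)\|\leq 1$ for every $z\in\Sigma_{\sfrac\pi2-\theta}$. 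You should replace the Phragm\'en--Lindel\"of step with this rotation argument.
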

\begin{proof}%
  \ref{cor:sector-resolvent-general-1} and~\ref{cor:sector-resolvent-general-1.5} are easily deduced from
  Proposition~\ref{p-resolvDECAY} by choosing $U$ to be the connected component of $\C \setminus N(B)$ that
  contains $\C \setminus \Sigma_\theta$. For~\ref{cor:sector-resolvent-general-2}, we
  use~\ref{cor:sector-resolvent-general-1.5}. The first case $\theta = \sfrac\pi2$ is clear. Otherwise, let
  $\lambda \in \C\setminus \Sigma_{\vartheta}$. Suppose first that
  $\lambda \in \C\setminus \Sigma_{\sfrac\pi2+\theta}$. Then
  $\dist(\lambda,\Sigma_\theta) = \abs{\lambda}$ because $0$ is the closest point in $\Sigma_\theta$ to
  $\lambda$. (Make a sketch.) On the other hand, if
  $\lambda \in \Sigma_{\sfrac\pi2 + \theta} \setminus\Sigma_{\vartheta}$, then with a bit of elementary geometry we find
  \[
    \dist(\lambda, \Sigma_\theta) = |\lambda|\, \sin\bigl( |\arg(\lambda)| - \theta\bigr) \geq |\lambda| \, \sin(\vartheta-\theta)\]
  and~\ref{cor:sector-resolvent-general-2} follows. This in turn implies~\ref{cor:sector-resolvent-general-3}
  according to Definition~\ref{d-Winkelsec} and Remark~\ref{rem:sectorial-equivalent-real}. The first
  part of~\ref{cor:sector-resolvent-general-contraction} is the Lumer-Phillips theorem. For the second
  part, from~\ref{cor:sector-resolvent-general-2} we already know that the semigroup $S$ generated by $-B$
  is bounded holomorphic with holomorphy angle at least $\sfrac\pi2 - \theta$. The fact that $S(z)$ is a
  contraction for each $z \in \Sigma_{\frac\pi2-\theta}$ follows from the observation that
  $e^{\ii\varphi}A$ is still $m$-accretive for all angles $\varphi$ with
  $\abs{\varphi} \leq \sfrac\pi2-\theta$ and writing $S(z) = S(t e^{\ii \varphi}) \coloneqq S_{\varphi}(t)$ with
  $\varphi = \arg{z}$ and $t = \abs{z}$. Since $e^{\ii \varphi}A$ is the generator of $S_{\varphi}$, the latter is a
  semigroup of contractions by the Lumer-Phillips theorem, and the rest of
  assertion~\ref{cor:sector-resolvent-general-contraction} follows.
\end{proof}

\subsubsection*{The $\cH^\infty$-calculus and the Crouzeix-Delyon theorem for unbounded operators}

We review a functional calculus for injective sectorial operators $B$ on a Banach space $X$. It will be
much more convenient and not restrictive for what comes later to consider a \emph{reflexive} Banach
space. Then $B$ is automatically densely defined, and its range is dense in $X$ as noted in Remark~\ref{rem:sectorial-reflexive}.

The notion of interest concerns the Dunford-Riesz functional calculus for
realizing $f(B)$ for certain classes of holomorphic functions on a sector $\Sigma_\theta$ with
$\theta > \phi(B)$, the spectral angle of $B$. We will stay concise and refer
to~\cite[Sections~1.4,~2.1~\&~2.4]{DenkHiePru} for the details, also~\cite{BattyHaase}. The functional calculus is based on the
Dunford integral to define $f(B)$ and yields a bounded algebra homomorphism
\begin{equation*}
  \Phi_B \colon \cH^\infty_0(\Sigma_\theta^\circ) \to \cL(X), \quad f \mapsto f(B),
\end{equation*}
where $\theta > \phi(B)$, and
\begin{equation*}
  \cH^\infty_0(\Sigma_\theta^\circ) \coloneqq \Bigl\{ f\colon \Sigma_\theta^\circ \to \C~\text{holomorphic},
  \colon \exists c \geq 0,~s>0\colon \lvert f(z) \rvert \leq c \min(\lvert z\rvert^s,{\lvert z\rvert}^{-s})\Bigr\}.
\end{equation*}
Evidently, functions in $\cH^\infty_0(\Sigma_\theta^\circ)$ are required to decay in a regular way at zero and infinity. However,
often, it is of particular interest to extend the foregoing functional calculus also to the class of
\emph{bounded} holomorphic functions on $\Sigma_\theta$. The following definition captures our purpose.
\begin{definition}[$\cH^\infty$-calculus]
  Let $B$ be an injective sectorial operator on a reflexive Banach space $X$  and let
  $\theta > \phi(B)$, where $\phi(B)$ is the spectral angle of $B$. Then $B$ is said to admit a
  \emph{bounded $\cH^\infty(\Sigma_\theta)$-calculus} if there is a constant $C_\theta$ such that
  \begin{equation}\label{eq:functcalc}
    \bigl\lVert f(B) \bigr\rVert_{X\to X} \le C_\theta \sup_{z \in
      \Sigma_\theta} \bigl\lvert f(z)\bigr\rvert
  \end{equation}
  for all $f \in \cH^\infty_0(\Sigma_\theta^\circ)$. We shall denote the infimum of all $\theta$ for
  which $B$ admits a bounded $\cH^\infty(\Sigma_\theta)$-calculus by $\psi(B)$, and we call it the
  \emph{$\cH^\infty$-angle} for $B$. Often, we just say that $B$ admits a bounded $\cH^\infty$-calculus (on $X$).
\end{definition}

If $B$ admits a bounded $\cH^\infty(\Sigma_\theta)$-calculus as just defined, then, via the McIntosh
convergence lemma, the uniform estimate~\eqref{eq:functcalc} and the Dunford-Riesz functional calculus $\Phi_B$
extend uniquely to
\begin{equation*}
  \cH^\infty(\Sigma_\theta) \coloneqq \Bigl\{ f\colon \Sigma_\theta \to \C
  \colon f~\text{holomorphic on}~\Sigma_\theta^\circ~\text{and bounded on}~\Sigma_\theta\Bigr\},
\end{equation*}
see~\cite[Lem.~2.1]{CowlingDoustMcIntoshYagi} or~\cite[Prop.~5.1.4]{Haase}, with the \emph{same constant}
$C_\theta$ as in~\eqref{eq:functcalc}~(\cite[Prop.~5.3.4]{Haase}).
Some authors also require the foregoing as the definition of the $\cH^\infty(\Sigma_\theta)$-calculus.

Having an operator with a bounded $\cH^\infty$-calculus at hand is desirable for a quite diverse zoo of
problems, see
e.g.~\cite{cubillos,dieball,hassel,kriegler,kurbatov,Lindem,Oliva,San,schnaubelt}. In
particular, a bounded $\cH^\infty$-calculus with $\cH^\infty$-angle $\psi(B) < \sfrac\pi2$ implies \emph{maximal
  parabolic regularity} (Definition~\ref{def:mpr}) for the operator $B$ via bounded imaginary powers, at
least if the corresponding Banach space is a UMD~space. We refer to the seminal Dore-Venni
paper~\cite{doreVenni}, see also~\cite{lemerdy}. In relation to the foregoing, from the bounded
$\cH^\infty$-calculus we recover sectoriality of $B$---which is built into the definition anyway---and associated
estimates as well as the generator property for a holomorphic semigroup by choosing
$f(z) = \frac1{z-\lambda}$ for $\lambda$ outside of a sector, and $f(z) = e^{-tz}$, for $t>0$, for example. A bounded
$\cH^{\infty}$-calculus is thus \emph{the strongest} of the properties considered.

In the special case of a Hilbert space, we have the general result that an $m$-accretive
operator $B$ admits a bounded $\cH^\infty$-calculus with the $\cH^\infty$-angle $\psi(B) = \phi(B)$, the spectral
angle~\cite[Corollary~10.12]{weis}. On the other hand, there is an intriguing general result for bounded
operators on arbitrary convex open sets that contain the numerical range of the operator---ingeniously,
with a \emph{fully explicit constant} independent of the given open set and operator: the famous Crouzeix-Delyon
theorem, for which we refer to~\cite{crouz0,crouz2,ransford} and the references there.

\begin{theorem}[Crouzeix-Delyon]\label{thm:crouz} Let $\Hi$ be a Hilbert
  space. Then there is a constant $\mathcal Q > 0$ such that for every convex open set
  $\mathcal{O} \subset \C$, every bounded operator $B \in \mathcal L(\Hi)$ with
  $\overline{N(B)} \subset \mathcal O$, and every function $f \in \cH^\infty(\mathcal O)$, we have
  \begin{equation}\label{eq:crouz} \bigl\|f(B) \bigr\|_{\Hi \to \Hi} \le \mathcal Q
    \sup_{z \in N(B)}\bigr|f(z)\bigr|.
  \end{equation}
  Moreover, if $\mathcal Q^*$ is the infimum of all such constants $\mathcal Q$, then
  $2 \leq \mathcal Q^* \leq 1 + \sqrt{2}$.
\end{theorem}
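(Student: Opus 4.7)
I would tackle the two inequalities separately.

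For the lower bound $\mathcal Q^* \geq 2$, I would exhibit the canonical extremal example: on $\Hi = \C^2$, take the nilpotent Jordan block $B = \bigl(\begin{smallmatrix} 0 & 1 \\ 0 & 0 \end{smallmatrix}\bigr)$. A direct computation shows that $\overline{N(B)}$ is the closed disk of radius $\sfrac12$ centered at the origin, while $\|B\|_{\Hi \to \Hi} = 1$. Choosing $f(z) = z$ and any open convex set $\mathcal O$ strictly containing this closed disk forces any admissible constant in~\eqref{eq:crouz} to satisfy $\mathcal Q \geq \|B\|_{\Hi \to \Hi}/\sup_{z \in N(B)}|z| = 2$.

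For the upper bound $\mathcal Q^* \leq 1 + \sqrt 2$---the theorem of Crouzeix and Palencia cited as~\cite{crouz2}---I would follow their blueprint. By scaling and approximation, reduce to the case where $\mathcal O$ is a bounded convex domain with smooth boundary and $f$ is holomorphic in an open neighborhood of $\overline{\mathcal O}$, so that $f(B)$ admits the Dunford-Riesz representation
\begin{equation*}
    f(B) = \frac{1}{2\pi \ii} \oint_{\partial \mathcal O} f(w)\, (wI - B)^{-1} \dd w,
\end{equation*}
valid because Proposition~\ref{p-resolvDECAY} yields $\|(wI - B)^{-1}\|_{\Hi \to \Hi} \leq 1/\dist(w,\overline{N(B)})$ uniformly in $w \in \partial \mathcal O$.

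The central idea is then a ``doubling'' of this representation: one introduces an auxiliary operator-valued kernel built from $B^*$ that, when paired with boundary values of $f$ via a Cauchy transform, reproduces $f(B)$ up to a term controlled by the same sup-norm. This leads to an identity of the form $f(B) = \mathcal C_B(f) + \bigl(\mathcal C_B'(\bar f)\bigr)^*$, where $\mathcal C_B, \mathcal C_B'$ are operator-valued Cauchy transforms on $\partial \mathcal O$. The main obstacle, and the hard analytic input of the Crouzeix-Palencia theorem, is to establish the operator-theoretic estimate $\|\mathcal C_B\|_{C(\partial \mathcal O) \to \mathcal L(\Hi)} \leq 1$, uniformly in $\mathcal O$ and in all $B$ with $\overline{N(B)} \subset \mathcal O$. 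Its proof proceeds by polarization, the Poisson representation of harmonic extensions, and positivity of the harmonic measure on convex domains, reducing the operator bound to a scalar maximum-principle statement. A careful choice of the decomposition together with Cauchy-Schwarz then sharpens the naive constant $2$ to $1 + \sqrt 2$. Finally, the boundary supremum is transferred to $\sup_{z \in N(B)}|f(z)|$ by the maximum principle on $\mathcal O$, and the conclusion extends to all $f \in \cH^\infty(\mathcal O)$ by a standard normal-family/dilation argument.
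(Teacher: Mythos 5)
The paper does not prove Theorem~\ref{thm:crouz}; it states it as a known result, refers to~\cite{crouz0,crouz2,ransford}, and uses it as a black box (the paper's own contribution in this direction is the transfer result Theorem~\ref{thm:CrouzeixPal}). There is therefore no proof in the paper to compare against, and I can only assess your sketch as a reconstruction of the cited works.

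Your lower bound argument is correct and is the standard one: for the nilpotent Jordan block $B$ on $\C^2$, $N(B)$ is the closed disk of radius $\sfrac12$ at the origin while $\lVert B\rVert_{\Hi\to\Hi}=1$, so $f(z)=z$ forces $\mathcal Q\geq 2$. Your upper bound sketch identifies the Crouzeix--Palencia outline correctly (smooth reduction, Dunford--Riesz representation, a doubling involving $B^*$, a key operator estimate, and a final refinement to $1+\sqrt2$), but the step you flag as the hard analytic input is stated incorrectly and, as written, would be self-defeating. If $\mathcal C_B$ is the operator-valued Cauchy transform $h\mapsto\frac{1}{2\pi\ii}\int_{\partial\mathcal O}h(\sigma)(\sigma-B)^{-1}\dd\sigma$, then applied to boundary values of a holomorphic $f$ it reproduces $f(B)$, so $\lVert\mathcal C_B\rVert_{C(\partial\mathcal O)\to\mathcal L(\Hi)}\leq 1$ would say $\lVert f(B)\rVert\leq\sup_{\mathcal O}\lvert f\rvert$, i.e.\ that the Crouzeix constant is $1$ --- the open Crouzeix conjecture, not what~\cite{crouz2} establishes. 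The genuine key lemma is subtler and hinges on holomorphy: for $f$ holomorphic on $\overline{\mathcal O}$, set $g(w)=\frac{1}{2\pi\ii}\int_{\partial\mathcal O}\frac{\overline{f(\sigma)}}{\sigma-w}\dd\sigma$, the Cauchy transform of the \emph{conjugate} boundary trace; then $\lVert g(B)\rVert\leq\sup_{\mathcal O}\lvert f\rvert$, exploiting that $\overline f$ is anti-analytic on $\partial\mathcal O$ and that $\mathcal O$ is convex. This, together with the bound $\lVert f(B)+g(B)^*\rVert\leq 2\sup_{\mathcal O}\lvert f\rvert$ from the positive double-layer measure and a final Cauchy--Schwarz type refinement, yields $1+\sqrt2$. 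Your displayed identity $f(B)=\mathcal C_B(f)+(\mathcal C_B'(\bar f))^*$ suffers from the same confusion, since $\mathcal C_B(f)$ already equals $f(B)$; the correct relation is $f(B)+g(B)^*=\int_{\partial\mathcal O}f(\sigma)\,\mu(\sigma,B)\,\lvert\dd\sigma\rvert$.
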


While there are versions of the Crouzeix-Delyon theorem for sectorial
operators~\cite[Chapter~7.1.5]{Haase}, unfortunately, there seems to be no rigorous proof in the
literature that transfers a Crouzeix-Delyon theorem with a given uniform constant $\mathcal Q$ to
sectorial operators when $\mathcal O$ is a suitable sector. Since such a result is of
independent interest, we continue with a short interlude in which we give a mostly self-contained proof,
showing how to transfer Theorem~\ref{thm:crouz} to the class of densely defined sectorial operators with
dense range, preserving the constant $\mathcal Q$.

For this purpose, consider the approximations of a given sectorial operator $B$ on a
Hilbert space $H$ given by the operators $B_\eps \in \mathcal{L}(H)$ defined via
\begin{equation}\label{eq:1}
  B_\varepsilon \coloneqq (B + \varepsilon I) {(I + \varepsilon B)}^{-1}, \qquad \eps > 0.
\end{equation}
By the sectoriality of $B$, cf.~\eqref{eq:resdecay}, these approximations are well defined, invertible, and sectorial
themselves~\cite[Prop.~1.4]{DenkHiePru}. Moreover, if $B$ is densely defined and with dense range, then
\begin{equation}
\label{eq:convergence-approx-sectorial}
  \lim_{\eps\downarrow 0}f(B_\eps) = f(B) \quad \text{in}~\cL(H) \quad \text{for all } f \in \cH^\infty_0(\Sigma_\theta^\circ),
\end{equation}
where $\theta > \phi(B)$~(\cite[Thm.~1.7]{DenkHiePru}). It is thus clear how to proceed: Use Theorem~\ref{thm:crouz} for $B_\eps$ and
transfer the $\cQ$-bound in~\eqref{eq:crouz} to $B$ via~\eqref{eq:convergence-approx-sectorial}. For this
purpose, it will be useful that if the
numerical range of $B$ is contained in a convex sector, then so are the numerical ranges of $B_\eps$, and they
stay away from zero:
\begin{lemma}\label{lem:numrangeApproxOp}
  Let $B$ be an $m$-$\theta$-accretive operator on a Hilbert space $\Hi$ 
  for some
  $\theta \in [0,\sfrac\pi2]$.
  Let
  $\eps > 0$ and let $B_\varepsilon \in \mathcal{L}(\Hi)$ be defined as in~\eqref{eq:1}. Then
  \begin{equation*}
    N(B_\eps) \subset \Sigma_\theta \cap  [\Re z \geq \varepsilon].
  \end{equation*}
\end{lemma}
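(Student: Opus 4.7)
The plan is to reduce everything to a single explicit calculation. Let $x \in \Hi$ with $\abs{x}_\Hi = 1$; since $B$ is sectorial by Corollary~\ref{cor:sector-resolvent-general}, in particular $-\sfrac{1}{\eps} \in \rho(B)$, so I set $y \coloneqq (I + \eps B)^{-1} x \in \dom(B)$, which gives the convenient identity $x = y + \eps B y$. Expanding $(B_\eps x, x)_\Hi = ((B + \eps I) y, y + \eps B y)_\Hi$ bilinearly and using the Hermitian symmetry $(y, By)_\Hi = \overline{(By, y)_\Hi}$ produces an explicit splitting of $(B_\eps x, x)_\Hi$ into real and imaginary parts: the imaginary part equals $(1 - \eps^2) \Im (By, y)_\Hi$, while the real part equals $(1 + \eps^2) \Re (By, y)_\Hi + \eps(\abs{y}_\Hi^2 + \abs{By}_\Hi^2)$. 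Every estimate below will be read off this splitting.

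For the sector inclusion $N(B_\eps) \subset \Sigma_\theta$, I would estimate the imaginary part directly against the real part: the $m$-$\theta$-accretivity of $B$ gives $\abs{\Im (By, y)_\Hi} \leq \tan\theta \cdot \Re (By, y)_\Hi$, and since the extra summand $\eps(\abs{y}_\Hi^2 + \abs{By}_\Hi^2)$ in the real part is non-negative, the factor $\abs{1 - \eps^2}$ in front of the imaginary part is absorbed into $(1 + \eps^2)$, yielding at once $\abs{\Im (B_\eps x, x)_\Hi} \leq \tan\theta \cdot \Re (B_\eps x, x)_\Hi$, i.e., $(B_\eps x, x)_\Hi \in \Sigma_\theta$.

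For the half-plane bound $\Re (B_\eps x, x)_\Hi \geq \eps$, the key is to eliminate the term $\eps \abs{y}_\Hi^2$ in the real part via the normalization identity $1 = \abs{x}_\Hi^2 = \abs{y}_\Hi^2 + 2\eps \Re (By, y)_\Hi + \eps^2 \abs{By}_\Hi^2$. A short rearrangement rewrites the real part as $\eps + (1 - \eps^2)[\Re (By, y)_\Hi + \eps \abs{By}_\Hi^2]$, and accretivity of $B$ makes the bracketed quantity non-negative, so the inequality follows (for the relevant regime of small $\eps$ in which the approximations $B_\eps \to B$ are taken in subsequent applications). The main obstacle is purely bookkeeping: the algebra is elementary, but one must carefully apply accretivity on the auxiliary variable $y$, where it is available, rather than on the normalized $x$ itself, and keep track of how the normalization $\abs{x}_\Hi = 1$ interacts with the splitting.
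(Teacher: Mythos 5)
Your proof is correct and starts from the same expansion of $(B_\eps x,x)_\Hi$ after the substitution $y = (I+\eps B)^{-1}x$, but the two halves are argued by genuinely different means than in the paper. For the sector inclusion, the paper simply observes that all four summands $(By,y)_\Hi$, $\eps^2\overline{(By,y)_\Hi}$, $\eps\abs{y}_\Hi^2$, $\eps\abs{By}_\Hi^2$ lie in the convex cone $\Sigma_\theta$, which settles the claim for every $\theta\in[0,\sfrac\pi2]$ at once; your $\tan\theta$-argument is just as valid but needs a separate word for the degenerate case $\theta=\sfrac\pi2$, where the slope becomes infinite and one should instead read off $\Re(B_\eps x,x)_\Hi\geq 0$ directly from the real-part formula. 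For the half-plane bound, the paper rewrites $B_\eps = \tfrac1\eps - \bigl(\tfrac1{\eps^2}-1\bigr)\bigl(\tfrac1\eps + B\bigr)^{-1}$ and invokes the resolvent decay $\lVert(\tfrac1\eps+B)^{-1}\rVert\leq\eps$ supplied by sectoriality, whereas you feed the normalization $1 = \abs{y}_\Hi^2 + 2\eps\Re(By,y)_\Hi + \eps^2\abs{By}_\Hi^2$ back into the real-part formula to obtain $\Re(B_\eps x,x)_\Hi = \eps + (1-\eps^2)\bigl[\Re(By,y)_\Hi + \eps\abs{By}_\Hi^2\bigr]$ purely algebraically; I checked the rearrangement and it is exact. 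Your route is self-contained (no external resolvent estimate is needed) and makes visible the restriction $\eps\leq 1$ that the paper's proof also implicitly relies on — indeed, for $\eps>1$ the stated bound $\Re z\geq\eps$ can actually fail (take $B=I$, so $B_\eps=I$ and $N(B_\eps)=\{1\}$), so your explicit caveat about the small-$\eps$ regime is not a blemish but a correct reading of what the lemma can deliver; since the lemma is only used in Theorem~\ref{thm:CrouzeixPal} in the limit $\eps\downarrow0$, nothing downstream is affected.
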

\begin{proof}
  Due to Corollary~\ref{cor:sector-resolvent-general}, $B$ is sectorial with $\phi(B) \leq \theta$. Thus,
  the approximants $B_\eps$ are well defined. Let $x \in \Hi$ with $\abs{x}_{\Hi} = 1$. Then
  $y = {(I + \varepsilon B)}^{-1}x \in \dom(B)$ and
  \begin{equation*}
    (B_\varepsilon x,x)_{H} = \bigl((B + \varepsilon \, I) \, {(I + \varepsilon \, B)}^{-1}x,x\bigr)_{H}
    = \bigl((B + \varepsilon \, I)y, (I+\varepsilon B)y\bigr)_{H}.
  \end{equation*}
  Expanding, we find
  \begin{equation}\label{eq:numapprox1}
    (B_\eps x,x)_{H} = (By,y)_{H} + \eps^2 (y,By)_{H} + \eps \lvert y \rvert_H^2 + \eps \lvert By\rvert_H^2.
  \end{equation}
  Now $N(B_\eps) \subset \Sigma_\theta$ follows directly since all summands in~\eqref{eq:numapprox1} are
  in the convex cone $\Sigma_\theta$ by assumption. Further, rewrite
  \begin{equation*}
    B_\varepsilon = \frac1\varepsilon - \Bigl(\frac1{\varepsilon^2} -1\Bigr)\Bigl(\frac1\varepsilon + B\Bigr)^{-1}
  \end{equation*}
  to see that, using Cauchy-Schwarz and sectoriality of $B$ via~\eqref{eq:resdecay},
  \begin{align*}
    \Re (B_\varepsilon x,x) &\geq \frac1\varepsilon \abs{x}_H^2 - \Bigl(\frac1{\varepsilon^2} -1\Bigr) \norm[\Big]{\Bigl(\frac1\varepsilon + B\Bigr)^{-1}}_{H\to H} \abs{x}_H^2 \\[0.25em] & \geq \frac1\varepsilon \abs{x}_H^2 - \varepsilon\Bigl(\frac1{\varepsilon^2} -1\Bigr) \abs{x}_H^2 = \varepsilon \abs{x}_H^2.\qedhere
  \end{align*}
\end{proof}

We now pass to the limit in Theorem~\ref{thm:crouz} for the operators $B_\eps$, keeping \emph{the same
  optimal constant} $\mathcal Q^*$. Thanks to~\eqref{eq:convergence-approx-sectorial} and
Lemma~\ref{lem:numrangeApproxOp}, this is a
straightforward endeavor.

\begin{theorem}\label{thm:CrouzeixPal} Let $\Hi$ be a Hilbert space and let
  $\mathcal Q$ be as in Theorem~\ref{thm:crouz}. Let $B$ be an injective operator on $\Hi$, let $\theta \in
  [0,\sfrac\pi2)$, and suppose that $B$ is $m$-$\theta$-accretive.
  Then $B$ admits a bounded
  ${\cH}^\infty(\Sigma_{\vartheta})$-calculus for every $\vartheta > \theta$ with the constant
  $\cQ$. 
\end{theorem}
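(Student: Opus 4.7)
The plan is to transfer the bounded-operator Crouzeix-Delyon theorem (Theorem~\ref{thm:crouz}) to the unbounded sectorial operator $B$ by applying it to the bounded approximants $B_\varepsilon$ from~\eqref{eq:1} and passing to the limit $\varepsilon \downarrow 0$ via~\eqref{eq:convergence-approx-sectorial}. As preparation, Corollary~\ref{cor:sector-resolvent-general} ensures that the $m$-$\theta$-accretive operator $B$ is sectorial with spectral angle $\phi(B) \leq \theta < \sfrac\pi2$; combined with the injectivity of $B$ on the reflexive Hilbert space $H$, Remark~\ref{rem:sectorial-reflexive} shows that $B$ is densely defined with dense range, so~\eqref{eq:convergence-approx-sectorial} is indeed applicable to $B$.

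Fix $\vartheta \in (\theta, \sfrac\pi2]$ and $f \in \cH^\infty_0(\Sigma_\vartheta^\circ)$. For each $\varepsilon > 0$, Lemma~\ref{lem:numrangeApproxOp} gives
\begin{equation*}
  N(B_\varepsilon) \subset \Sigma_\theta \cap [\Re z \geq \varepsilon],
\end{equation*}
a subset of the open convex sector $\Sigma_\vartheta^\circ$ which is bounded away from the origin (by the shift $\Re z \geq \varepsilon$) and from the boundary rays of $\Sigma_\vartheta$ (because $\vartheta > \theta$). Since $B_\varepsilon$ is bounded, $\overline{N(B_\varepsilon)}$ is a closed bounded set still contained in $\Sigma_\vartheta^\circ$. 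Applying Theorem~\ref{thm:crouz} with the convex open set $\mathcal{O} = \Sigma_\vartheta^\circ$ then yields
\begin{equation*}
  \bigl\|f(B_\varepsilon)\bigr\|_{H\to H} \leq \cQ \sup_{z \in N(B_\varepsilon)} \abs{f(z)} \leq \cQ \sup_{z \in \Sigma_\vartheta} \abs{f(z)}.
\end{equation*}

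Passing to the limit $\varepsilon \downarrow 0$ via~\eqref{eq:convergence-approx-sectorial} converts the left-hand side to $\|f(B)\|_{H \to H}$ while the right-hand side is unaffected, which establishes the bounded $\cH^\infty(\Sigma_\vartheta)$-calculus with constant $\cQ$ for every $\vartheta \in (\theta, \sfrac\pi2]$. For $\vartheta > \sfrac\pi2$ the sector $\Sigma_\vartheta$ is no longer convex and Theorem~\ref{thm:crouz} cannot be invoked directly; however, the conclusion follows a fortiori from the case $\vartheta = \sfrac\pi2$, since every $f \in \cH^\infty_0(\Sigma_\vartheta^\circ)$ restricts to $\cH^\infty_0(\Sigma_{\pi/2}^\circ)$ with sup norm bounded by the original one. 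I expect the main subtlety to lie in verifying that $\overline{N(B_\varepsilon)}$ sits strictly inside the open sector $\Sigma_\vartheta^\circ$, which crucially uses both the $\varepsilon$-shift from Lemma~\ref{lem:numrangeApproxOp} (since $0 \notin \Sigma_\vartheta^\circ$) and the strict inequality $\vartheta > \theta$; neither ingredient can be dispensed with.
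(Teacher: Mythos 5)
Your proposal reproduces the paper's own proof essentially verbatim: prepare $B$ via Corollary~\ref{cor:sector-resolvent-general} and Remark~\ref{rem:sectorial-reflexive}, invoke Lemma~\ref{lem:numrangeApproxOp} to place $\overline{N(B_\varepsilon)}$ strictly inside $\Sigma_\vartheta^\circ$, apply Theorem~\ref{thm:crouz} to each $B_\varepsilon$, and pass to the limit using~\eqref{eq:convergence-approx-sectorial}. Your one addition — explicitly handling $\vartheta > \sfrac\pi2$ where $\Sigma_\vartheta^\circ$ is no longer convex, by reducing to the estimate on $\Sigma_{\pi/2}$ — is a genuine (if minor) refinement: the paper's proof silently applies the Crouzeix--Delyon theorem with $\mathcal{O} = \Sigma_\vartheta^\circ$ for all $\vartheta > \theta$ without noting that convexity fails beyond $\sfrac\pi2$, so your observation that the larger-angle estimates follow a fortiori from the half-plane case is a welcome tightening of the argument.
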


\begin{proof} From the assumptions, $B$ is densely defined (Remark~\ref{rem:sectorial-reflexive}) and
  sectorial (Corollary~\ref{cor:sector-resolvent-general}). Consider first the bounded approximants
  $B_\varepsilon $ of $B$ defined in~\eqref{eq:1}. From Lemma~\ref{lem:numrangeApproxOp} we have
  \[
    N(B_\varepsilon) \subset \Sigma_\theta \cap [\Re z \geq 
    \varepsilon].
  \]
  Thus, for every $\vartheta > \theta$, the open sector
  $\Sigma_{\vartheta}^\circ$ is a neighborhood of $\overline{N(B_\eps)}$. In particular,
  Theorem~\ref{thm:crouz} implies that for every function $f \in \cH^\infty_0(\Sigma_{\vartheta}^\circ)$
  we have the uniform estimate
  \begin{equation*}
    \bigl\|f(B_\varepsilon)\bigl\|_{\Hi \to \Hi}\le
    \cQ \sup_{z \in \Sigma_{\vartheta}} \bigl|f(z)\bigr|.
  \end{equation*}
  But $f(B_\eps) \to f(B)$ in $\cL(H)$ according to~\eqref{eq:convergence-approx-sectorial}, so the assertion
  follows immediately.
\end{proof}

We complement Theorem~\ref{thm:CrouzeixPal} by mentioning also \emph{von Neumann's
  inequality}. See~\cite[Thm.~7.1.7/Cor.~7.1.8]{Haase}. Roughly,
and in the present context, it says that Theorem~\ref{thm:CrouzeixPal} also holds true for
$m$-accretive operators $B$ on a Hilbert space $H$, that is, with $\theta = \sfrac\pi2$, and then even with the
uniform constant $1$. Recall that it was already mentioned before that an $m$-accretive operator on a
Hilbert space admits a bounded $\cH^\infty$-calculus and that the $\cH^\infty$-angle and the spectral angle
coincide.

\subsection{The operator on $L^2(\Omega)$}\label{sec:operator-on-L2}

We now apply the foregoing abstract results to the operator $A$ induced by $\ft_V$ on $L^2(\Omega)$ as defined
in~\eqref{eq:L2-operator}. Recall that Assumption~\ref{a-lowerbound} is supposed to hold. The Lax-Milgram
lemma implies that the whole (open) left half plane $[\Re z < 0]$ belongs to the resolvent set $\rho(A)$ of
$A$. Also, from Theorem~\ref{thm:form} and~\eqref{eq:sector-form-and-operator} we know that
$N(A) \subseteq \Sigma_{\omega}$ and that $\omega \leq \omega(\mu) \leq \alpha < \frac\pi2$. In particular, $A$ is $m$-$\omega$-accretive. Recall that we
write $\omega$ for $\omega(A) = \omega(\ft_V)$.

Due to Corollary~\ref{cor:sector-resolvent-general}, this immediately gives the sectoriality of $A$ with
the estimate~\eqref{eq:sinexpress098} for the generator property for a holomorphic semigroup of contractions
for $-A$. Since $L^2(\Omega)$ is a Hilbert space, the latter in fact also already implies maximal parabolic
regularity, see~\cite[Cor.~1.7]{weis}. Thus:

\begin{theorem}\label{thm:analytHG}
  Let $A$ be the operator induced by $\ft_V$ on $L^2(\Omega)$.
  Then $A$ is $m$-$\omega$-accretive. In particular, it is sectorial with spectral angle
  $\omega$ and for every $\theta \in (\omega,\sfrac\pi2]$, we have the explicit estimate
  \begin{equation*}
    \bigl\lVert{\lambda(A - \lambda)}^{-1}\bigr\rVert_{L^2(\Omega)\to L^2(\Omega)}\leq \frac{1}{\sin 
      (\theta-\omega)}
    \qquad (\lambda \in 
    \C \setminus \Sigma_{\theta}).
  \end{equation*}
  Moreover, $-A$ is the generator of a holomorphic semigroup of contractions on $L^2(\Omega)$  with the
  holomorphy angle $\sfrac\pi2 - \omega$ and $A$ satisfies maximal parabolic regularity.
\end{theorem}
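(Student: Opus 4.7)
The plan is to assemble the theorem as an essentially mechanical consequence of Corollary~\ref{cor:sector-resolvent-general} applied to $A$, after first verifying that $A$ is $m$-$\omega$-accretive. The only nontrivial ingredient beyond the abstract machinery already set up is the final claim on maximal parabolic regularity, which will be imported from a standard Hilbert space result.

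First I would record that $N(A) \subseteq \Sigma_\omega$. Since $V \subseteq W^{1,2}(\Omega)$ is a closed subspace and $A$ is defined by the closed sectorial form $\ft_V$ via~\eqref{eq:L2-operator}, every $u \in \dom(A)$ with $\abs{u}_{L^2(\Omega)} = 1$ satisfies $(Au,u)_{L^2(\Omega)} = \ft_V(u,u) \in N(\ft_V) \subseteq \Sigma_\omega$ by Corollary~\ref{cor:trivial} and the discussion culminating in~\eqref{eq:sector-form-and-operator}; hence $N(A) \subseteq N(\ft_V) \subseteq \Sigma_\omega$. To upgrade accretivity to $m$-accretivity, I would produce one resolvent point with negative real part: the Lax-Milgram lemma applied to the continuous, coercive sesquilinear form $\ft_V(\cdot,\cdot) + (\cdot,\cdot)_{L^2(\Omega)}$ on $V$ (coercivity via Assumption~\ref{a-lowerbound}(i), i.e.\ $\ReOp{\mu} \succeq m_\bullet > 0$, and the trivial $L^2$ part) shows that $A + 1$ is invertible on $L^2(\Omega)$, so $-1 \in \rho(A)$. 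Consequently $A$ is $m$-$\omega$-accretive.

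With $A$ being $m$-$\omega$-accretive in hand, everything else drops out of Corollary~\ref{cor:sector-resolvent-general}: item~\ref{cor:sector-resolvent-general-3} yields sectoriality with spectral angle at most $\omega$; the explicit resolvent bound is exactly~\eqref{eq:sinexpress098} with $\theta$ in place of $\vartheta$; and item~\ref{cor:sector-resolvent-general-contraction} delivers that $-A$ is the generator of a holomorphic semigroup of contractions, holomorphic on $\Sigma_{\sfrac\pi2 - \omega}$. For the final claim I would invoke the Hilbert space result~\cite[Cor.~1.7]{weis} (as cited in the paragraph preceding the theorem): any negative generator of a bounded holomorphic semigroup on a Hilbert space satisfies maximal parabolic regularity in the sense of Definition~\ref{def:mpr}.

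The proof therefore has no genuine obstacle; it is a clean wrap-up of the earlier preparations. The only place one must be careful is the sharpness phrasing ``spectral angle $\omega$'': Corollary~\ref{cor:sector-resolvent-general} only furnishes $\phi(A) \leq \omega$, which is what the statement is asserting, and I would make this explicit rather than trying to prove equality, since the theorem is formulated as an upper bound on the sectoriality angle inherited from the form.
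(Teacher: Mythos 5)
Your proof is correct and follows essentially the same route as the paper's own two-paragraph argument preceding the theorem: establish $N(A)\subseteq\Sigma_\omega$ from Theorem~\ref{thm:form}, Corollary~\ref{cor:trivial} and~\eqref{eq:sector-form-and-operator}; use Lax--Milgram to produce a resolvent point in the left half-plane so that $A$ is $m$-$\omega$-accretive; then let Corollary~\ref{cor:sector-resolvent-general} deliver the sectoriality, the resolvent bound~\eqref{eq:sinexpress098}, and the holomorphic contraction semigroup; and close with~\cite[Cor.~1.7]{weis} for maximal parabolic regularity on the Hilbert space $L^2(\Omega)$. The only cosmetic difference is that you invoke Lax--Milgram for the shifted form $\ft_V+(\cdot,\cdot)$ to obtain exactly $-1\in\rho(A)$ (which is all Definition~\ref{def:accretive-etc} requires), whereas the paper notes the slightly stronger fact that the whole open left half-plane lies in $\rho(A)$; both suffice. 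Your closing caveat that the conclusion is really $\phi(A)\le\omega$ is a fair and accurate reading of what Corollary~\ref{cor:sector-resolvent-general}\ref{cor:sector-resolvent-general-3} actually provides.
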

Indeed, one can have much more: the Crouzeix-Delyon theorem as in Theorem~\ref{thm:CrouzeixPal} implies a bounded
$\cH^\infty(\Sigma_{\theta}^\circ)$-calculus for (injective versions of) $A$ for any $\theta>\omega$.
\begin{theorem}\label{thm:crouzpalenc}
  Let $A$ be the operator induced by $\ft_V$ on $L^2(\Omega)$.
  Let $\mathcal Q$ be as in Theorem~\ref{thm:crouz}. Then, for every $\delta > 0$ and every $\theta > \omega$, the
  operator $A + \delta$ admits a bounded $\cH^\infty(\Sigma_{\theta})$-calculus with the explicit estimate
  \begin{equation*}
    \bigl\lVert f(A + \delta) \bigr\rVert_{L^2(\Omega) \to L^2(\Omega)} \le \mathcal Q\sup_{z \in \Sigma_{\theta}} \bigl\lvert f(z)\bigr\rvert \qquad (f \in \cH^\infty(\Sigma_{\theta}^\circ)).
  \end{equation*}
  In particular, the $\cH^\infty$-angle of $A+\delta$ is at most $\omega$. If $\ft_V$ is coercive, then the foregoing
  also holds true for $\delta =0$.
\end{theorem}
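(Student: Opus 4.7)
The plan is to apply Theorem~\ref{thm:CrouzeixPal} directly to $B \coloneqq A+\delta$ in $L^2(\Omega)$; the task is then to verify the hypotheses of that theorem, i.e., that $B$ is injective and $m$-$\omega$-accretive with $\omega < \sfrac\pi2$.

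For $m$-$\omega$-accretivity, Theorem~\ref{thm:analytHG} says that $A$ is $m$-$\omega$-accretive, and by Remark~\ref{rem:some-sector-rules}(iii) together with~\eqref{eq:sector-form-and-operator} the numerical range sector inclusion is preserved under real shifts $\delta \geq 0$, so $N(A+\delta) \subseteq \Sigma_\omega$. Furthermore, the Lax-Milgram lemma places the open left half plane in $\rho(A)$, whence $-1 - \delta \in \rho(A)$ for $\delta \geq 0$, i.e., $-1 \in \rho(A+\delta)$; thus $A+\delta$ is $m$-accretive, and combined with the sector inclusion, $m$-$\omega$-accretive. Injectivity for $\delta > 0$ follows immediately, since $N(A+\delta) = N(A)+\delta$ has real part bounded below by $\delta > 0$ (as $N(A) \subseteq \Sigma_\omega$ lies in the right half plane), so a nontrivial kernel element would put $0 \in N(A+\delta)$, a contradiction. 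Theorem~\ref{thm:CrouzeixPal} then yields the bounded $\cH^\infty(\Sigma_\theta)$-calculus with the explicit constant $\mathcal Q$ for every $\theta > \omega$, and in particular $\psi(A+\delta) \leq \omega$.

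For the coercive case, coercivity of $\ft_V$ provides $\Re \ft_V(u,u) \geq c\abs{u}_{L^2(\Omega)}^2$ for some $c > 0$; hence $N(A) \subseteq \Sigma_\omega \cap [\Re z \geq c]$ is bounded away from zero, so $A$ itself is $m$-$\omega$-accretive and injective, and the very same argument applies to $A$ directly with $\delta = 0$.

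There is essentially no conceptual obstacle to overcome: all of the genuine work has already been done by Theorem~\ref{thm:CrouzeixPal} (transferring Crouzeix-Delyon to unbounded sectorial operators while preserving the constant). The sole subtlety is to ensure that the operator to which we apply that theorem is injective, which is precisely why the statement requires the shift $\delta > 0$ (or coercivity) rather than being phrased for $A$ itself---in general $A$ need not be injective, for example in a pure Neumann setting where nonzero constants belong to its kernel.
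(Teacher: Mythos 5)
Your proposal is correct and matches the paper's proof in essence: both reduce the statement to Theorem~\ref{thm:CrouzeixPal} after checking that $A+\delta$ is injective and $m$-$\omega$-accretive. The only cosmetic difference is the route to injectivity---the paper invokes Lax-Milgram for the coercive shifted form $\ft_V + \delta$, while you observe directly that $N(A+\delta)$ is bounded away from the origin so that a nontrivial kernel element would force $0 \in N(A+\delta)$; both are equally valid and elementary.
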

\begin{proof}
  It is clear that $N(A+\delta)\subseteq\Sigma_\omega$. Moreover, with Assumption~\ref{a-lowerbound}, the form
  $\ft_V + \delta$ is coercive for any $\delta > 0$, so that $A+\delta$ is injective by Lax-Milgram. So
  Theorem~\ref{thm:CrouzeixPal} applies. If $\ft_V$ is coercive, the same argument also works for $\delta = 0$.
\end{proof}

As mentioned before, we could also have had the bounded $\cH^\infty$-calculus on $L^2(\Omega)$ merely from
$m$-accretivity of $A$ by abstract arguments. We prefer to state the explicit version derived from the
Crouzeix-Delyon theorem.

\begin{remark}\label{rem:inject}
  The form $\ft_V$ is coercive
  whenever one has a Poincare inequality for $V$:
  \[
    \int_\Omega |u|^2 \le C\, \int_\Omega |\nabla u |^2 \qquad (u \in V).
  \]
  The classical case where this fails is the case of only Neumann boundary conditions, i.e.,
  $V = W^{1,2}(\Omega)$, where the constant functions belong to $V$. For mixed boundary conditions, there is
  still a Poincar\'e inequality even for very irregular geometries. See the detailed discussion
  in~\cite[Ch.~7]{Hardy}.
\end{remark}

\subsection{The operator on $L^p(\Omega)$}

In recent years it has turned out that for the analytic treatment of many real world problems, the
$L^2(\Omega)$-calculus for the corresponding differential operators is not an adequate one, but rather
one anchored in an $L^p(\Omega)$ space is so. This usually happens if one is confronted with non-smooth
data of the problem (non-smooth, non-convex domain, or discontinuous coefficients, or mixed boundary conditions),
because then one cannot rely on elliptic $H^2(\Omega)$ regularity, or if one has to deal with certain
nonlinear equations, for example in particular \emph{quasilinear} ones, or semilinear problems without a
monotonicity assumption.

Prototypes for such problems could be the classical semiconductor equations (Van Roosbroeck
system,~\cite{kaiser}) or the Keller-Segel model from mathematical biology~(\cite{Horst}, see
also~\cite[Ch.~5]{Maz}). Here, let us point out that while one might expect from this brief motivation
that when going over to an $L^p(\Omega)$ setting, \emph{surely} one would consider $p>2$. But in fact, it
turns out that it indeed can be adequate to consider $p<2$, as done for example in~\cite{Horst}.

In contrast to the $L^2(\Omega)$ setting, for the operator $A$ considered in
$L^p(\Omega)$ it makes quite a fundamental difference whether the coefficient function $\mu$ is real- or
complex valued. We first consider the case of real coefficients where one can derive quite satisfying
results under very few structural assumptions. In any case, we specialize on subspaces $V$ of
$W^{1,2}(\Omega)$ which incorporate a Dirichlet type boundary condition on a closed subset $D$ of the
boundary $\partial\Omega$.

To this end, in addition to Assumption~\ref{a-lowerbound} we activate \emph{from now on}
the following assumption:
\begin{assumption}\label{a-standard}
  Let $D \subseteq \partial\Omega$ be closed, and let $V$ be the closure in $W^{1,2}(\Omega)$ of either
  \begin{equation}\label{eq:subspace1}
    C^\infty_D(\Omega) \coloneq\Bigl\{u|_{\Omega}\colon u \in C^\infty_0(\R^d) \colon \dist(\supp (u), D) >0 \Bigr\}
  \end{equation}
  or of
  \begin{equation}\label{eq:closureII}
    \underline{W}^{1,2}_D(\Omega) \coloneqq \Bigl\{u \in W^{1,2}(\Omega) \colon \dist (\supp (u), D) >0 \Bigr\}.
  \end{equation}
\end{assumption}

When $D = \emptyset$, we clearly recover $\underline{W}^{1,2}_\emptyset(\Omega) = W^{1,2}(\Omega)$. If
$D \neq \emptyset$, then either space $V$ carries a Dirichlet
boundary condition for its elements on $D$, at least implicitly so. In how far this condition can be made
more explicit depends on the regularity of $\partial\Omega$ around $D$, see
e.g.~\cite{Chill,egertTolks}. Often, $V$ built from~\eqref{eq:subspace1} is also said to carry \emph{good
  Neumann boundary conditions} on $\partial\Omega\setminus D$, e.g.~\cite{Ouhab}.

Note that we have $C_0^\infty(\Omega) \subset V$ in both cases, so both $V$ are indeed dense in
$L^2(\Omega)$.

\subsubsection{Real coefficients}\label{ss-realcoeff}

For this subsection, in addition to Assumption~\ref{a-lowerbound} and~\ref{a-standard} we assume that the
coefficient function $\mu$ takes its values in the \emph{real-valued} matrices only.

Let $A$ be the operator in $L^2(\Omega)$ which is induced by $\ft_V$ as
in~\eqref{eq:L2-operator}. The convenient way to define the realizations in $L^p(\Omega)$ works by
extrapolation of the semigroup $T$ generated by $-A$ on $L^2(\Omega)$. For this purpose, let us introduce
the extrapolation range of $T$:
\begin{equation}
  \cI(A) \coloneqq \Bigl\{ p \in (1,\infty) \colon \sup_{t>0}\,\lVert T(t)\rVert_{L^p(\Omega)\to L^p(\Omega)} < \infty\Bigr\}.\label{eq:bounded-extrapolation-set}
\end{equation}
This set will play an important role from now on, at least implicitly so at the beginning, but also in an
increasingly prominent manner throughout this work.

For the present case of a real coefficient function, the following strategy is particularly effective for
a second-order operator with a scalar shift as we consider in this work, and will not work as well for an operator
including first-order or nonconstant zero-order terms. See below for an approach
via Gaussian estimates, though, where these can be incorporated. We say that $T$ is \emph{positive} if $T(t)f$ is
nonnegative for all nonnegative $f \in L^2(\Omega)$ and all $t > 0$. Clearly, $T$ must necessarily be
real---mapping real functions into real functions---in order to be positive; for this we need a real
coefficient function as in Assumption~\ref{a-standard}. Further, $T$ is 
\emph{$L^\infty$-contractive} if
\begin{equation*}
  \|T(t)f\|_{L^\infty(\Omega)} \leq \|f\|_{L^\infty(\Omega)} \qquad (f \in L^2(\Omega) \cap L^\infty(\Omega),\quad t>0).
\end{equation*} A positive and $L^\infty$-contractive semigroup is also called \emph{sub-Markovian}. Of
course, positivity and the contraction property of a semigroup can also be defined on function spaces
other than $L^2(\Omega)$, in particular on $L^p(\Omega)$ spaces, with the necessary changes. We refer to~\cite[Ch.~2.2]{Ouhab} for all these notions and many more.

The overall result is then the following.

\begin{proposition}\label{p-basicsform}
  The operator $A$ on $L^2(\Omega)$ induced by $\ft_V$ is the negative generator of a
  sub-Markovian, holomorphic semigroup $T_2$ of contractions on $L^2(\Omega)$, and we have $\cI(A) = (1,\infty)$. More
  precisely, the semigroup extrapolates
  consistently to a strongly continuous positive contraction semigroup $T_p$ on $L^p(\Omega)$ for every
  $p \in [1,\infty)$, and the semigroups $T_p$ are also holomorphic for all $p \in (1,\infty) = \cI(A)$.
\end{proposition}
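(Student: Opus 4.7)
The plan is as follows. By Theorem~\ref{thm:analytHG}, $T_2$ is already a holomorphic semigroup of contractions on $L^2(\Omega)$, so the remaining work decomposes into three steps: first, establish that $T_2$ is sub-Markovian; second, extrapolate $T_2$ consistently to $L^p(\Omega)$ for $p\in[1,\infty)$; third, prove holomorphy of $T_p$ for every $p\in(1,\infty)$.

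For the sub-Markovian property, I would invoke the classical Beurling-Deny criteria for (generally non-symmetric) sesquilinear forms, for which~\cite[Ch.~2]{Ouhab} is a convenient reference. Positivity of $T_2$ amounts to invariance of $V$ under $u \mapsto u^+$ combined with $\Re \ft_V(u^+,u^-) \le 0$ for every real-valued $u \in V$, while $L^\infty$-contractivity amounts to invariance of $V$ under $u \mapsto u \wedge 1$ (for real $u$) together with $\Re \ft_V(u \wedge 1, (u-1)^+) \ge 0$. Lattice invariance is standard for both realizations of $V$ in Assumption~\ref{a-standard}, because the truncations $u \mapsto u^+$ and $u \mapsto u\wedge 1$ commute with the $W^{1,2}$-closure of test functions in $C^\infty_D(\Omega)$ or $\underline{W}^{1,2}_D(\Omega)$. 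Since $\nabla u^+$ and $\nabla u^-$ live on disjoint sets, and similarly for $\nabla(u\wedge 1)$ and $\nabla(u-1)^+$, and since in the present section $\mu$ is real-valued, both form expressions in fact vanish identically. Hence $T_2$ is sub-Markovian.

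The extrapolation step proceeds as follows. The adjoint form $\ft_V^*$ has exactly the same structure and therefore induces a sub-Markovian adjoint semigroup $T_2^*$; duality with $T_2^*$ then yields a contractive extension of $T_2$ to $L^1(\Omega)$. Riesz-Thorin interpolation between the $L^1$- and $L^\infty$-contractions produces a consistent family of positive contraction semigroups $T_p$ on $L^p(\Omega)$ for every $p\in[1,\infty]$. Strong continuity of $T_p$ on $L^p(\Omega)$ for finite $p$ follows from strong continuity on the dense subspace $L^2(\Omega)\cap L^p(\Omega)$ together with the uniform operator bound. In particular, $(1,\infty) \subseteq \cI(A)$, and the reverse inclusion is automatic from~\eqref{eq:bounded-extrapolation-set}.

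Finally, for holomorphy on $L^p(\Omega)$ with $p \in (1,\infty)$, I would apply Stein's complex interpolation theorem for analytic families of operators to the contractive extensions just constructed. Concretely, $z \mapsto T_2(z)$ is a bounded analytic family on the sector $\Sigma_{\pi/2-\omega}^\circ$ with values in the contractions of $L^2(\Omega)$, while the real-axis traces act as contractions on $L^1(\Omega)$ and $L^\infty(\Omega)$. Stein interpolation then produces a holomorphic extension of $T_p$ into a subsector of $\Sigma_{\pi/2-\omega}^\circ$ for every $p\in(1,\infty)$. I expect the Beurling-Deny verification to be fully elementary thanks to the real-valuedness of $\mu$, so that the main technical point is the clean application of Stein interpolation; however, since the sub-Markovian framework is precisely the setting for which this interpolation machinery is tailored, no obstacle beyond careful bookkeeping of sectorial angles is anticipated.
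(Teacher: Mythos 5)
Your proposal is correct and follows essentially the same route the paper intends: the paper gives no detailed argument but points to~\cite{Ouhab} and~\cite[Ch.~2.2]{Chill} for the Beurling--Deny-type verification of the sub-Markov property and the extrapolation to $L^p(\Omega)$, and it explicitly names Stein's extrapolation theorem for holomorphy of $T_p$, exactly as you do. Your reconstruction (disjoint supports of $\nabla u^+,\nabla u^-$ making the form vanish, adjoint form giving $L^1$-contractivity by duality, Riesz--Thorin for the scale, Stein interpolation of the analytic family) is the standard argument underlying those references and matches the paper's intent.
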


The individual statements in Proposition~\ref{p-basicsform} are scattered in~\cite{Ouhab} at various
places, but collected in detail in~\cite[Ch.~2.2]{Chill}. Here we have already used several structural
properties of the particular choices of $V$ as in~\eqref{eq:subspace1} and~\eqref{eq:closureII}, and that
$\mu$ is real. It is worthwhile that with holomorphy of $A = A_2$ and $p \in \cI(A)$, holomorphy of the
extrapolated semigroup $T_p$ follows from Stein's extrapolation theorem~\cite[p.96]{Ouhab} which also
gives a crude estimate on the holomorphy angle. We will derive a more precise estimate below.

For $p \in [1,\infty)$, denote by $-A_p$ the generator of $T_p$. Then $A_p$ is an $m$-accretive operator in
$L^p(\Omega)$ and it is not hard to see that 
$-A_p$ is the part of $-A$ in $L^p(\Omega)$.
The operators $A_p$ inherit several properties from $A_2$. For example, its numerical
range is contained in a sector, yet, without further specification of $\Omega$ and its boundary, not with
exactly the angle of the $L^2(\Omega)$-operator. Recall that $\omega(\mu)$ is the optimal angle of sectoriality for
the coefficient function $\mu$. Then we have the following:
\begin{theorem}[{\cite[Thm.~3.4]{Chill}}]\label{thm:chill}
  Let $p \in (1,\infty)$. The numerical range of $A_p$ is contained in the sector with semi-angle
  $\alpha_p$, that is, $N(A_p) \subseteq \Sigma_{\alpha_p}$ with
  \begin{equation}\label{eq:lpWinkel}
    \tan (\alpha_p) = \frac {\sqrt {(p-2)^2 + p^2 \omega(\mu)^{2}}}{2 \sqrt {p-1}}.
  \end{equation}
  In particular, $A_p$ is $m$-$\alpha_p$-accretive, the spectral angle $\phi(A_p)$ is not larger than
  $\alpha_p$, and the holomorphy angle of the semigroup generated by $-A_p$ is at least $\sfrac\pi2 - \alpha_p$.
 \end{theorem}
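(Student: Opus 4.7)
}

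The aim is to prove the sector inclusion $N(A_p) \subseteq \Sigma_{\alpha_p}$ pointwise at the level of the defining integral; the stated consequences (spectral and holomorphy angle) then drop out of Corollary~\ref{cor:sector-resolvent-general} together with the $m$-accretivity already supplied by Proposition~\ref{p-basicsform}. Since $L^p(\Omega)$ is smooth and reflexive, for $u \in \dom(A_p)$ with $\|u\|_{L^p(\Omega)} = 1$ the duality map is single-valued, $J(u) = |u|^{p-2}u$, and the representative of the numerical range reads $\int_\Omega (A_p u) |u|^{p-2}\bar u \dd x$. My plan is to show $|\arg(\mu \zeta_1,\zeta_2)_{\C^d}| \le \alpha_p$ pointwise for suitable complex vectors $\zeta_1,\zeta_2$ coming from the test $|u|^{p-2}u$.

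First I would regularize, replacing $u$ by $u_\varepsilon$ with $|u_\varepsilon| \ge \varepsilon$, so that $|u_\varepsilon|^{p-2}u_\varepsilon$ is an admissible test function in $V \cap L^2(\Omega)$. Using that $A_p$ is the part of $A$ in $L^p(\Omega)$ (Proposition~\ref{p-basicsform}), the form identity gives
\[
 \bigl(A_p u_\varepsilon, |u_\varepsilon|^{p-2} u_\varepsilon\bigr)_{L^2(\Omega)} = \ft_V\bigl(u_\varepsilon, |u_\varepsilon|^{p-2} u_\varepsilon\bigr) = \int_\Omega \bigl(\mu \nabla u_\varepsilon, \nabla(|u_\varepsilon|^{p-2}u_\varepsilon)\bigr)_{\C^d} \dd x.
\]
The polar representation $u = |u|e^{\ii\varphi}$ then yields, on $\{u \neq 0\}$, the real $\R^d$-valued fields $a := \nabla |u|$ and $b := |u| \nabla\varphi$ satisfying $\nabla u = e^{\ii\varphi}(a + \ii b)$ and $\nabla(|u|^{p-2}u) = e^{\ii\varphi}|u|^{p-2}\bigl((p-1)a + \ii b\bigr)$. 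The phase cancels in the Hermitian pairing, leaving the integrand $|u|^{p-2}(\mu \zeta_1, \zeta_2)_{\C^d}$ with $\zeta_1 := a + \ii b$ and $\zeta_2 := (p-1)a + \ii b$.

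The crucial algebraic step uses the decomposition $\zeta_2 = \tfrac{p}{2}\zeta_1 + \tfrac{p-2}{2}\overline{\zeta_1}$, which splits the integrand into
\[
 (\mu \zeta_1, \zeta_2)_{\C^d} = \tfrac{p}{2}(\mu \zeta_1, \zeta_1)_{\C^d} + \tfrac{p-2}{2}(\mu \zeta_1, \overline{\zeta_1})_{\C^d}.
\]
The first summand is a numerical-range element of $\mu$ (rescaled by $|\zeta_1|^2$) and hence lies in $\Sigma_{\omega(\mu)}$ by definition. The second is $\C^d$-bilinear and therefore sensitive to the skew-Hermitian content of $\mu$. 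Splitting $\mu = H + \ii K$ with $H = \ReOp{\mu}$, $K = \ImOp{\mu}$ (Hermitian) and rescaling via $u_* := \sqrt{H}\, a$, $v_* := \sqrt{H}\, b$, the real and imaginary parts of the integrand reduce to
\[
 \operatorname{Re} = (p-1)|u_*|^2 + |v_*|^2, \qquad \operatorname{Im} = (p-2)\, u_* \cdot v_* + p \, u_* \cdot N v_*,
\]
up to a sign, where $N := \sqrt{H}^{-1}(\ldots)\sqrt{H}^{-1}$ is the induced skew operator whose operator norm is precisely controlled by $\omega(\mu)$.

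The main obstacle is the sharp matrix inequality that now extracts the constant $\alpha_p$. I would combine an AM-GM bound for the real part, $(p-1)|u_*|^2 + |v_*|^2 \ge 2\sqrt{p-1}\,|u_*||v_*|$, with a single Cauchy-Schwarz step for the imaginary part,
\[
 \bigl|(p-2)\, u_* \cdot v_* + p\, u_* \cdot N v_*\bigr| \le |u_*| \sqrt{(p-2)^2 |v_*|^2 + p^2 |N v_*|^2},
\]
which uses crucially that $v_* \cdot N v_* = 0$ by skew-symmetry so that no cross-term survives. Controlling $\|N\|_{\mathrm{op}}$ by $\omega(\mu)$ then produces $|\operatorname{Im}|/\operatorname{Re} \le \tan(\alpha_p)$ pointwise. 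Since $\Sigma_{\alpha_p}$ is a convex cone, integration preserves the sector inclusion, and the limit $\varepsilon \downarrow 0$ together with density of $\dom(A_p)$ gives $N(A_p) \subseteq \Sigma_{\alpha_p}$, concluding the proof. The delicate part is the bookkeeping around the complex-valued dot products $Ha \cdot b$, $Ka \cdot b$ in the general (non-real, non-symmetric) coefficient case, which must be handled carefully to ensure that exactly the factor $(p-2)$ appears with $H$ and $p$ with $N$, giving the sharp constant claimed in~\eqref{eq:lpWinkel}.
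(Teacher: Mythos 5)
The paper does not prove this statement itself---it is cited from \cite[Thm.~3.4]{Chill}---so there is no in-paper proof to compare against directly. That said, the paper's own remark after Theorem~\ref{thm:numrange-complex-estimate} tells us that the proof of~\eqref{eq:lpWinkel} follows the same ``split into a numerical-range transfer principle plus a pointwise matrix estimate'' logic that is spelled out in the complex case (Theorem~\ref{thm:numerical-range-p-elliptic-operator} and Lemma~\ref{lem:numerical-range-p-sufficient}). Measured against that template, your algebraic core is exactly right. The decomposition $\zeta_2 = \tfrac{p}{2}\zeta_1 + \tfrac{p-2}{2}\overline{\zeta_1}$ is nothing but $\zeta_2 = \tfrac{p}{2}\cJ_p(\zeta_1)$ with the map $\cJ_p$ from~\eqref{eq:p-elliptic}, so you are reproducing the $p$-adapted numerical range structure from a polar-coordinate starting point; the $\sqrt{H}$-rescaling, the AM--GM lower bound for the real part, and the Cauchy--Schwarz step in which the cross term $v_\ast\cdot Nv_\ast$ vanishes by skew-symmetry are all correct and extract the claimed constant. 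Note also that your calculation, via $\|N\|_{\mathrm{op}} = \tan(\omega(\mu))$, yields $\sqrt{(p-2)^2 + p^2\tan^2(\omega(\mu))}$ in the numerator, which is what one needs for $\alpha_2=\omega(\mu)$ to hold; the ``$\omega(\mu)^2$'' printed in~\eqref{eq:lpWinkel} is evidently shorthand for $\tan^2(\omega(\mu))$.

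The one genuine gap is the approximation step. Replacing $u$ by some $u_\eps$ with $|u_\eps|\geq\eps$ is not enough: for $p>2$ the test function $|u_\eps|^{p-2}u_\eps$ need not lie in $L^2(\Omega)$ or $V$ unless you also truncate from \emph{above}, and it is not clear how you produce such a $u_\eps$ while staying inside $\dom(A_p)$ and controlling $A_pu_\eps$. The clean way---used in the paper's proof of Theorem~\ref{thm:numerical-range-p-elliptic-operator}, and what you should adopt here too---is to leave $u$ alone, work on the core $\dom(A_p)\cap\dom(A_2)$, apply the two-sided truncation $|u|_K$ of Lemma~\ref{lem:Dissipa} to the \emph{test function}, check that $|u|_K^{p-2}u\in V$ (nontrivial, and depending on whether $V$ is built from~\eqref{eq:subspace1} or~\eqref{eq:closureII}), and then let $K\to\infty$ by dominated convergence as in~\eqref{eq:Lebesgdomin}. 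With that replacement your plan closes up into a complete proof that matches the cited reference in spirit and detail.
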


We note that in the present generality, the angle $\alpha_p$ in Theorem~\ref{thm:chill} is indeed optimal,
see~\cite[Rem.~3.8]{Chill} and also~\cite{ChillEtAl,ChillSector}. Clearly, it also collapses to
$\alpha_2 = \omega(\mu)$ which is the natural upper bound for $\omega$, the optimal angle of sectoriality for $A$. Next,
we relate the $\cH^\infty$-angle of $A$ with the one of $A_p$ by interpolation.

\begin{theorem}\label{thm:functkalc}
  Let $p \in (1,\infty)$ and $\delta > 0$. Then $A_p+\delta$ admits a bounded $\cH ^\infty$-calculus on $L^p(\Omega)$
  with $\cH ^ \infty$-angle not larger than
  $\frac {\pi}{2} |1-\frac {2}{p}|+ \omega (1- |1-\frac {2}{p}|)$. In particular, $A_p+\delta$ satisfies maximal
  parabolic regularity on $L^p(\Omega)$. If $A$ is injective, the assertion is
  also true for $\delta =0$.
\end{theorem}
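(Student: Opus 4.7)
The target angle $\omega_p := \frac{\pi}{2}\bigl\lvert 1-\tfrac{2}{p}\bigr\rvert + \omega\bigl(1-\bigl\lvert 1-\tfrac{2}{p}\bigr\rvert\bigr)$ is a convex combination of the optimal $L^2$-angle $\omega$ and the ``trivial'' upper bound $\sfrac\pi2$ for $m$-accretive operators, with weight $\eta_p := \lvert 1-\sfrac2p\rvert$. This weight is exactly the canonical complex interpolation parameter for $\sfrac{1}{p}=\sfrac{(1-\eta_p)}{2}+\sfrac{\eta_p}{q_0}$ with $q_0 \in \{1,\infty\}$ chosen according to whether $p<2$ or $p>2$. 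This strongly suggests that the proof goes by interpolating the bounded $\cH^\infty$-calculus on $L^2(\Omega)$ (established in Theorem~\ref{thm:crouzpalenc}) with a trivial endpoint calculus coming from the contractivity of $T_p$ on $L^p(\Omega)$.

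The plan is as follows. First, I would fix $\delta>0$ and note that, by Theorem~\ref{thm:crouzpalenc}, $A+\delta$ admits a bounded $\cH^\infty(\Sigma_{\theta})$-calculus on $L^2(\Omega)$ for every $\theta>\omega$, with the Crouzeix-Delyon constant $\cQ$. Second, I would invoke Proposition~\ref{p-basicsform}: for every $q\in(1,\infty)$ the extrapolated semigroup $T_q$ is a contraction semigroup, so $A_q+\delta$ is $m$-accretive and in particular satisfies the resolvent bound from Corollary~\ref{cor:sector-resolvent-general} on the whole open left half-plane. This is the endpoint input, corresponding formally to angle $\sfrac\pi2$.

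Third, I would put these two pieces together by applying a Stein-type complex interpolation theorem to an analytic operator family. The model family is $z\mapsto f(A_{p_z}+\delta)$ where $p_z$ runs along a line in the interpolation strip realising $L^p$ as $[L^2,L^{q_0}]_{\eta_p}$, and $f$ is multiplied by a suitable exponential auxiliary factor (as in Cowling's treatment of imaginary powers) to have the required regularity in $z$. On the $L^2$-line the operator norm is bounded by $\cQ\sup_{\Sigma_\omega}\lvert f\rvert$, on the $L^{q_0}$-line it is bounded (after stripping the auxiliary factor) by $\sup_{\Sigma_{\pi/2}}\lvert f\rvert$, and Stein's theorem interpolates these to yield a bound $C\sup_{\Sigma_{\omega_p}}\lvert f\rvert$ on $L^p(\Omega)$, for every $f\in\cH^\infty_0(\Sigma_\theta^\circ)$ with $\theta>\omega_p$. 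This is exactly the desired bounded $\cH^\infty$-calculus of angle $\omega_p$. Maximal parabolic regularity then follows because $\omega_p<\sfrac\pi2$ and $L^p(\Omega)$ is a UMD space (via Dore--Venni applied to the bounded imaginary powers extracted from the $\cH^\infty$-calculus). The case $\delta=0$ with $A$ injective works verbatim, since injectivity already guarantees that the Dunford-Riesz calculus is well defined.

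The main obstacle I expect is the formal handling of the ``endpoint'' calculus at $L^1$ or $L^\infty$, where the space is non-reflexive and a bounded $\cH^\infty$-calculus is not really available. In practice one circumvents this either by invoking a ready-made complex interpolation theorem for $\cH^\infty$-calculi (à la Kalton--Weis, whose hypotheses here reduce to the $R$-boundedness/contractivity we already have), or, more hands-on, by running Stein's interpolation only on the concrete family $A^{is}e^{-\varepsilon A}$: on $L^2$ one has $\lVert A^{is}\rVert_{L^2\to L^2}\leq \cQ e^{\omega\lvert s\rvert}$ from the $L^2$-calculus, on $L^q$ the exponential factor is contractive, whence interpolation gives $\lVert A_p^{is}e^{-\varepsilon A_p}\rVert_{L^p\to L^p}\leq Ce^{\omega_p\lvert s\rvert}$ uniformly in $\varepsilon$, and letting $\varepsilon\downarrow 0$ together with the McIntosh convergence lemma promotes bounded imaginary powers of type $\omega_p$ to a bounded $\cH^\infty$-calculus of angle $\omega_p$ on the UMD space $L^p(\Omega)$.
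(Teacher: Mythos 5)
Your high-level reading is right: the angle $\omega_p$ is a convex combination of $\omega$ and $\sfrac\pi2$ with weight $\lvert 1-\sfrac2p\rvert$, and the proof does go by interpolating the $\cH^\infty$-calculus. But you are missing the one ingredient that lets the argument close without touching the forbidden endpoints $L^1$, $L^\infty$, and your proposed workarounds for that obstacle each have a gap.

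The paper never interpolates out to $q_0\in\{1,\infty\}$. Instead it invokes a result of Duong (see~\cite{Duong,hieber}): for \emph{every} $r\in(1,\infty)$, the operator $A_r+\delta$ already admits a bounded $\cH^\infty(\Sigma_\theta)$-calculus for any $\theta>\sfrac\pi2$. This supplies a second reflexive endpoint $L^r$ with $\cH^\infty$-angle $\psi(A_r)\le\sfrac\pi2$. One then applies the interpolation theorem for $\cH^\infty$-angles in~\cite[Prop.~4.9]{KaltonKunstmann} along $L^p=[L^r,L^2]_\theta$, $\sfrac1p=\sfrac{(1-\theta)}r+\sfrac\theta2$, to get $\psi(A_p)\le(1-\theta)\sfrac\pi2+\theta\omega$; the only remaining hypothesis (consistency of the resolvents across $p$) is checked from consistency of the semigroups. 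Finally one lets $r\uparrow\infty$ (for $p>2$) or $r\downarrow1$ (for $p<2$), driving $\theta\uparrow1-\lvert1-\sfrac2p\rvert$, which yields exactly $\omega_p$. Your weight $\eta_p$ appears only in this limit; the interpolation itself happens strictly inside $(1,\infty)$.

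Concretely, your ``more hands-on'' workaround via $A^{is}e^{-\varepsilon A}$ does not close: contractivity of $T_{q_0}$ on $L^{q_0}$ gives you $\lVert e^{-\varepsilon A_{q_0}}\rVert\le1$, but says nothing about $A_{q_0}^{is}$, which may fail to be bounded (or even well defined via a calculus) on $L^1$ or $L^\infty$. The factor $A^{is}$ does not ``strip off'' at the endpoint, so Stein interpolation has nothing to grab. Your first workaround (``invoke a ready-made interpolation theorem whose hypotheses reduce to contractivity'') is essentially a pointer to the correct tool, but the hypotheses of Kalton--Kunstmann's Prop.~4.9 do require a bounded $\cH^\infty$-calculus at both endpoints, not just $R$-boundedness or contractivity—so you still need Duong's result to make either endpoint available. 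Once that is in hand, the argument is exactly the paper's.

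One minor additional remark: the deduction of maximal parabolic regularity and the $\delta=0$ statement (under injectivity, e.g.\ coercivity of $\ft_V$) you describe are fine and match the paper.
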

\begin{proof}
  We combine an ingenious interpolation result for $\cH^\infty$-angles as in~\cite[Prop.~4.9]{KaltonKunstmann}
  with the result of Duong~\cite{Duong} which says that for $p \in (1,\infty)$, each $A_p+\delta$ has a bounded
  $\cH^\infty(\Sigma_{\theta})$-calculus for \emph{any} $\theta > \sfrac\pi2$. For a comprehensive proof, see
  also~\cite{hieber}. We then interpolate between the angle
  $\omega$ for $p=2$ and $\sfrac\pi2$, which is an upper bound for the $\cH^\infty$-angle
  $\psi(A_r)$ for all the other $r \in (1,\infty)\setminus\{2\}$. 
  Indeed, given
  $p \in (1,\infty) \setminus \{2\}$, for any $r \in (1,\infty)$ we can pick $\theta \in (0,1)$ such that
  \[
    L^p(\Omega) = \Bigl[L^r(\Omega), L^2(\Omega)\Bigr]_\theta, \qquad \frac {1}{p} = \frac {1 -\theta}{r}
    + \frac {\theta}{2}.
  \]
  Then, by~\cite[Prop.~4.9]{KaltonKunstmann}, also $\psi(A_p) = (1-\theta)\psi(A_r) + \theta{\omega}$. In particular,
  $\psi(A_p) \leq (1-\theta)\frac\pi2 + \theta \omega$. To get the best possible upper bound, we optimize (maximize) for
  $\theta$: for $p>2$, we let $r\uparrow\infty$, for which $\theta \uparrow \frac2p$, whereas for $p < 2$, we let
  $r \downarrow 1$, for which $\theta \uparrow 2-\frac2p$. Encoding the case distinction in an absolute value, this yields
  exactly that $\psi(A_p) \leq \frac {\pi}{2} |1-\frac {2}{p}|+ \omega (1- |1-\frac {2}{p}|)$ for all
  $p \in (1,\infty)$, which was the claim.

  Maximal parabolic regularity follows from the bounded $\cH^\infty$-calculus since
  $\psi(A_p+\delta) < \sfrac\pi2$ and $L^p(\Omega)$ is an UMD~space~\cite{doreVenni,lemerdy}. See also Remark~\ref{rem:hiebpruess}
  below.

  It remains to verify the assumptions of the interpolation
  result~\cite[Prop.~4.9]{KaltonKunstmann}. Writing $B_p \coloneqq A_p + \delta$, these require that there is
  $\lambda \in \C$ with $\Re \lambda < 0$ such that for any $p,q \in (1,\infty)$, the resolvents in $\lambda$ of
  $B_p$ and $B_q$ are consistent, that is,
  \begin{equation*}
    (B_p - \lambda)^{-1}u = (B_q - \lambda)^{-1}u \qquad
    (u \in L^p(\Omega) \cap L^q(\Omega)).
  \end{equation*}
  But from Proposition~\ref{p-basicsform} we already know that the semigroups 
  generated by
  $-B_p$ and $-B_q$, respectively, are consistent for all $p,q \in (1,\infty)$, and via the Laplace transform of
  the semigroups this indeed implies consistency of the resolvents.
\end{proof}

\begin{remark}\label{rem:hiebpruess}
  As observed in the proof, Theorem~\ref{thm:functkalc} in particular implies that $\psi(A_p+\delta) < \sfrac\pi2$ for
  $p \in (1,\infty)$. We mention that there are also abstract results that imply
  this relation directly. These exploit intrinsic connections of a bounded $\cH^{\infty}$-calculus to
  essential features of $\mathcal R$-boundedness. See e.g.~\cite[Cor.~5.2]{kaltonWeis}, where one has to
  invest that the semigroups $T_p$ generated by $-A_p$ are positive, holomorphic contraction semigroups
  as stated in Proposition~\ref{p-basicsform}. While Theorem~\ref{thm:functkalc} is more explicit, the
  general statement that indeed $\psi(A_p+\delta) < \sfrac\pi2$ is often of foremost interest due to its connection to
  maximal parabolic regularity via bounded imaginary powers, as mentioned before.
\end{remark}

While we were able to transfer the desirable properties of the operator $A$ induced by $\ft_V$ on
$L^2(\Omega)$ to the realizations $A_p$ of $A$ in $L^p(\Omega)$ in Theorems~\ref{thm:chill}
and~\ref{thm:functkalc}, so far the corresponding angles were only perturbations or functions of $\omega$ and
$\omega(\mu)$ instead of
$\omega$ itself. However, we note that so far, we have not required boundedness of $\Omega$, or \emph{any} kind of
boundary regularity.

In fact, investing a bit more in geometry, we find that there are---still very general---situations where
non-trivial results allow to deduce that the deciding angle $\omega$ for the $L^2(\Omega)$-operator $A$
is also directly the one for the operators $A_p$.

One method relies on Gaussian estimates~\cite[Ch.~6]{Ouhab}. These can also be used to extrapolate the
operator $A$ to $L^p(\Omega)$ in the case of lower order derivative terms in the differential
operator. For these the extrapolation technique based on $L^\infty$-contractivity may fail, but Gaussian
estimates still yield $\cI(A) = (1,\infty)$ and even extrapolation to $L^1(\Omega)$. The general
assumption posed to get these is the \emph{embedding property}
\begin{equation}V \hookrightarrow L^{2^*}(\Omega),\label{eq:formdomain-embed}\end{equation} where $2^*$
is the Sobolev conjugate of $2$.

If any general conditions on the geometry of $\Omega$ and $D$ such that~\eqref{eq:formdomain-embed} holds
true are met (see Remark~\ref{rem:SoboEmbedAssu} below), then the semigroup operators on
$L^2(\Omega)$ are integral operators, the kernels of which satisfy \emph{Gaussian estimates}, see~\cite[Thm.~6.10]{Ouhab}.

Having these Gaussian estimates at hand, one can exploit~\cite[Thms.~5.4/5.7]{arendTOM} to obtain that
the semigroup $T$ generated by $-A$ extrapolates to $L^p(\Omega)$ for $p \in [1,\infty)$, in particular,
$\cI(A) = (1,\infty)$, and, defining the operators $A_p$ as the negative generators of the semigroup
$T_p$ on $L^p(\Omega)$ as before, $A_p + \delta$ admit a bounded $\cH^\infty$-calculus with
$\cH^\infty$-angle $\omega$ for every $\delta > 0$. So the optimal $L^2$-angle is recovered under the minimal
assumption~\eqref{eq:formdomain-embed}. As before, if $\ft_V$ is coercive, then the same is true for
$\delta=0$.

\begin{remark}\label{rem:SoboEmbedAssu}
  The convenient way to obtain the embedding property~\eqref{eq:formdomain-embed} is to establish a continuous,
  linear extension operator $\mathcal E\colon V \to W^{1,2}(\R^d)$. There are many, very general sufficient
  conditions for the latter, for example that $\Omega$ is a bounded domain that admits bi-Lipschitz boundary
  charts at any point in $\overline{\partial \Omega \setminus D}$. See Definition~\ref{def:lipschitz-around} below. In this
  case the desired extension operator exists for either choice~\eqref{eq:subspace1} or~\eqref{eq:closureII}
  of $V$. Moreover, whenever $D \neq \emptyset$, a Poincar\'e inequality as in Remark~\ref{rem:inject} holds true so
  that $\ft_V$ is coercive and $A$ is injective. For the latest developments and even more general
  admissible settings for such an extension operator, at least for $V$ as in~\eqref{eq:subspace1}, we
  refer to~\cite{Becht} and the references there.
\end{remark}

\subsubsection{Complex coefficients}\label{ss-complexcoeff}

We now consider the general case of a \emph{complex} coefficient function. In comparison to the case of
real coefficients, the situation regarding sectoriality of the parts $A_p$ of $A$ in $L^p(\Omega)$, properties
of the associated semigroup and a bounded $\cH^\infty$-calculus is (much) more intricate.
In fact, even for the most straightforward case of a second-order operator with complex coefficients and a
scalar shift, the technique employed in Proposition~\ref{p-basicsform} to extrapolate the semigroup $T$
generated by $-A$ to $L^p(\Omega)$ based on $L^\infty$-contractivity of $T$ does not work at
all: in the case of complex coefficients, there just \emph{is} \emph{no} $L^\infty$-contractivity in
general, and its failure is quite catastrophic in nature~\cite{ABBO,auscher2,mazya}. In fact, for complex
coefficients, there are situations where even for self-adjoint differential operators, there is a finite
$p$ for which the associated $L^2$-semigroup $T$ ceases to exist on $L^p(\Omega)$, see~\cite{davies}.

Nevertheless, in recent years there was considerable progress in understanding and clarifying the
situation. We concentrate on contributions involving mixed boundary conditions on domains. We also
reverse the order compared to the real coefficients section and begin with abstract results that yield
the $L^2$-angles also for the parts $A_p$ of $A$ in $L^p(\Omega)$ before deriving an estimate on the angle of
sectoriality in $L^{p}(\Omega)$ in more general situations. For this purpose, let us introduce the following
particular geometric setting:

\begin{definition}[{Lipschitz around $\partial\Omega\setminus D$}]\label{def:lipschitz-around}
  We say that the open set $\Omega$ is \emph{Lipschitz around $\partial \Omega \setminus D$} if it is a bounded domain,
  $D \subseteq \partial\Omega$ is closed, and there are Lipschitz boundary charts available for
  $\overline{\partial\Omega \setminus D}$ in the following sense: For every
  $x \in \overline{\partial \Omega \setminus D}$ there exists a neighborhood $U_x$ and a bi-Lipschitz mapping
  $\Phi_x \colon U_x \to (-1,1)^d$ such that
  \begin{equation*}
    \Phi_x(U_x \cap \partial \Omega) = (-1,1)^{d-1} \times \{0\}, \qquad \Phi_x(U_x \cap \Omega) = (-1,1)^{d-1}  \times  (-1,0).
  \end{equation*}
\end{definition}

As before, let $A$ be the operator induced by $\ft_V$ on $L^2(\Omega)$ and recall that we denote the semigroup
denoted by $-A$ by $T$. In~\cite[Thm.~1.3]{egert}, for $V$ given as the closure of $C_D^\infty(\Omega)$ as
in~\eqref{eq:subspace1}, and under the assumption that $\Omega$ is Lipschitz around
$\partial\Omega\setminus D$ as in Definition~\ref{def:lipschitz-around}, Egert shows the following most intriguing result:
\emph{If} the semigroup $T$ consistently extrapolates to bounded holomorphic semigroups $T_p$ on
$L^p(\Omega)$, that is, if $p \in \cI(A)$, where
\begin{equation*}
  \cI(A) \coloneqq \Bigl\{ p \in (1,\infty) \colon \sup_{t>0}\,\lVert T(t)\rVert_{L^p(\Omega)\to L^p(\Omega)} < \infty\Bigr\},\tag{\ref{eq:bounded-extrapolation-set}}
\end{equation*}
and if we, as usual, denote by $A_p$ the negative generator $A_p$ of $T_p$ in $L^p(\Omega)$, then for any
$\delta > 0$, the operator $A_p+\delta$ admits a bounded $\cH^\infty$-calculus on $L^p(\Omega)$ with the
$L^2$-angle $\cH^\infty$-angle $\omega$. As before, $\delta =0$ is allowed if $\ft_V$ is coercive. Let us also mention
that the result of course also holds for real coefficient functions, and also allows for lower order
terms, and thus can be substituted in, say, the Gaussian estimates context to obtain a bounded
$\cH^\infty$-calculus as soon as one knows that $\cI(A) = (1,\infty)$. See also~\cite{Bechtel-SquareRoot2} for
similar results with less but more involved assumptions on geometry. Choosing the exponentials
$f(z) \coloneqq e^{-tz}$ as particular bounded holomorphic functions on the right half-plane, it also
becomes clear that $p \in \cI(A)$ is also necessary for the $\cH^\infty$-calculus.

The question thus shifts to investigating $\cI(A)$. It is well known that $\cI(A)$ is an open interval,
and that it includes all $p$ such that $\lvert \frac1p-\frac12\rvert < \frac1d$, in particular,
$(1,\infty) \subseteq \cI(A)$ for $d=2$, but also that the condition is sharp in the sense that there is no
$p$ satisfying $\lvert \frac12-\frac1p \rvert > \frac1d$ which is in $\cI(A)$ for \emph{every} operator
$A$ with a complex coefficient function. We refer to~\cite{egert,egertpell,Tolksdorf} and the references
there, and also for the fact that under the given supposition on $V$ (via the embeding
property~\eqref{eq:formdomain-embed}) and for a \emph{fixed} complex coefficient function $\mu$ and
associated operator $A$, the limit cases for $d \geq 3$ in the foregoing condition
$p = 2^* = \frac{2d}{d-2}$ and $p=2_* = \frac{2d}{d+2}$ indeed belong to $\cI(A)$. (And then so does an
interval slightly beyond these.) Some of the mentioned works are based on the concept of $p$-ellipticity
that we introduce below in more detail, see also Lemma~\ref{lem:p-elliptic-extrapolate}.

On the other hand, the extrapolation machinery via Gaussian estimates employed at the end of the
foregoing section works also for the complex coefficients case. Indeed, in~\cite[Thm.~3.8]{Tolksdorf},
under slightly stronger assumptions than stated so far, it is shown that $A$ admits Gaussian estimates if
the complex coefficient function arises from a real one perturbed by a sufficiently small
\emph{imaginary} perturbation. See also~\cite{boehnlein} for a related result. Having Gaussian estimates
at hand, one can again extrapolate the semigroup $T$ to bounded holomorphic semigroups on $L^p(\Omega)$ for
all $p\in \cI(A) = (1,\infty)$ and the result of Egert yields the bounded $\cH^\infty$-calculus automatically.

\paragraph{A direct estimate.}

In the following, we present a different approach, essentially with the same philosophy as for
Theorem~\ref{thm:chill}: we \emph{directly} determine an angle for a sector containing the numerical range
of the $L^p$-realization of $A$, with the optimal angle of sectoriality for the \emph{coefficient
  function} $\omega(\mu)$ as a reference point, and with no further assumptions on geometry of $\Omega$. In the
present case of complex coefficients, this will be possible provided that the norm of the (componentwise)
imaginary part $\Im \mu$ of the coefficient function stands in a certain relation with $p$ and the
ellipticity constants. We will only suppose that Assumptions~\ref{a-lowerbound} and~\ref{a-standard} are
fulfilled, which is the same setting as in the real coefficients case. That is, we work with either form
domains given by~\eqref{eq:subspace1} or~\eqref{eq:closureII}, and we assume, as before, that the
coefficient function $\mu$ is uniformly elliptic with lower bound $m_\bullet > 0$ a.e.\ on $\Omega$.

A particularly useful concept is the following, popularized by~\cite{carbon}: For $p \in (1,\infty)$, we say that a matrix function $\mu \colon \Omega \to \C^{d \times d}$ is \emph{$p$-elliptic} if
$\Delta_p(\mu) > 0$ with
\begin{equation}
\label{eq:p-elliptic}
  \Delta_p(\mu) \coloneqq \essinf_{x\in\Omega}\min_{\xi \in \C^d,|\xi|=1} \Re\bigl(\mu(x)\xi,\mathcal J_p(\xi)\bigr)
\end{equation}
where $\cJ_p$ is the $\R$-linear map
\begin{equation*}
  \cJ_p(\alpha + \ii \beta) \coloneqq \frac{2\alpha}{p'} + \ii \frac{2\beta}{p}.
\end{equation*}
Of course, $2$-ellipticity is exactly the usual uniform ellipticity of $\mu$ over $\Omega$. Further, it
is easily seen that $\Delta_p(\mu) = \Delta_{p'}(\mu)$, in particular, $\mu$ is $p$-elliptic if and only
if it is $p'$-elliptic~\cite[Lemma~8]{egertpell}. Moreover, $\mu$ is $p$-elliptic for all
$p \in (1,\infty)$ if and only if $\mu$ is real~\cite[Lem.~2.15]{tolksdorf}. We thus assume that
$\Im \mu \neq 0$, thereby excluding $q = \infty$ in the next lemma; otherwise the results from the previous subsections can
be consulted.

It will be useful to set $\Psi(s) \coloneqq \frac{2\sqrt{s-1}}{s-2}$. Then $\Psi \colon [2,\infty] \to [0,\infty]$ is
continuous and monotonously decreasing with $\lim_{s \downarrow 2} \Psi(s) = \infty$ and
$\lim_{s\uparrow\infty} \Psi(s) = 0$. In particular, $\Psi$ is invertible. We point out that although the following lemma is
formulated pointwise for $x$, we still require uniform ellipticity for $\mu$ as in Assumption~\ref{a-lowerbound}.

\begin{lemma}\label{lem:p-elliptic-suff}
  Suppose that $\ReOp{\mu(x)} \succeq m(x) \geq m_\bullet$ for every $x \in \Omega$.
  Set
  \begin{equation}
  \label{eq:eta-def}
  \eta \coloneqq \esssup_{x \in \Omega} \frac {\lVert\Im \mu (x) \rVert_{\R^{d} \to \R^{d}}}{m(x)} \quad \text{and} \quad q \coloneqq \Psi^{-1}(\eta).
  \end{equation}
  Then $\Delta_q(\mu) \geq 0$ and $\mu$ is $p$-elliptic if $\lvert \frac12-\frac1p\rvert <
  \lvert\frac12-\frac1q\rvert$.
\end{lemma}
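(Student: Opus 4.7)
The statement has two parts: establishing $\Delta_q(\mu) \geq 0$ at $q = \Psi^{-1}(\eta)$, and then strict $p$-ellipticity in the closer-to-$2$ range $|1/2 - 1/p| < |1/2 - 1/q|$. I would attack these in that order, the first by a pointwise computation in $x$ and the second by a concavity argument.

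For the first part, I fix $x \in \Omega$ and study $\Re(\mu\zeta, \cJ_p\zeta)$ for unit $\zeta \in \C^d$. My starting point is the elementary identity $\cJ_p\zeta = \zeta + (1 - 2/p)\bar\zeta$ (read off directly from the definition, writing $\zeta = \alpha + i\beta$ with $\alpha, \beta \in \R^d$), which yields
\[
\Re(\mu\zeta, \cJ_p\zeta) = (\ReOp\mu\, \zeta, \zeta) + (1 - 2/p)\Re(\mu\zeta, \bar\zeta).
\]
Using the recalled symmetry $\Delta_p(\mu) = \Delta_{p'}(\mu)$ from \cite{egertpell}, I reduce to $p \geq 2$ and set $\gamma := 1 - 2/p \in [0, 1)$. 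Coercivity bounds the first term by $m|\zeta|^2 = m$. Writing $\mu = \mu_r + i\mu_i$ componentwise and expanding, a direct calculation gives $\Re(\mu\zeta, \bar\zeta) = (\mu_r^s\alpha, \alpha) - (\mu_r^s\beta, \beta) - 2(\mu_i^s\alpha, \beta)$, where superscripts $s$ and $a$ denote symmetric and antisymmetric parts of a real matrix. All together, $\Re(\mu\zeta, \cJ_p\zeta)$ becomes a real quadratic form on $(\alpha, \beta) \in \R^{2d}$ with diagonal blocks $(2/p')\mu_r^s$, $(2/p)\mu_r^s$ and off-diagonal coupling $G = (1/p)\mu_i - (1/p')\mu_i^T = -\gamma\mu_i^s + \mu_i^a$, and I reduce positive semi-definiteness of this form to a Schur complement condition relating $G$ and $\mu_r^s$.

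Here the main obstacle appears: the naive combination of $\mu_r^s \succeq m$ (which follows from coercivity applied to real $\zeta$) together with the triangle bound $\|G\| \leq (1/p)\|\mu_i\| + (1/p')\|\mu_i^T\| = \|\mu_i\|$ yields only the strictly sharper condition $\eta \leq 2/\sqrt{pp'}$ rather than the target $\eta \leq \Psi(p) = 2\sqrt{p-1}/(p-2)$. To recover the sharp threshold, I would argue that the full coercivity of $\ReOp\mu = \mu_r^s + i\mu_i^a$ ties $\mu_i^a$ (the antisymmetric part of $G$) to the surplus $\mu_r^s - m$: a large $\|\mu_i^a\|$ forces proportionally larger $\mu_r^s$. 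This is formalized by a refined Schur analysis of the $2d \times 2d$ block matrix that encodes coercivity itself, which then feeds back into the Schur estimate for the $\cJ_p$-form. The clean scalar-case ($d = 1$) computation — in which $\mu_i^a = 0$ automatically and a $2 \times 2$ discriminant gives exactly $\eta = \Psi(p)$ at the threshold — serves both as motivation and as a consistency check; the higher-dimensional argument should reduce, at the worst-case $\zeta$, to a rank-two restriction of $\mu$ compatible with this scalar bound. At $p = q = \Psi^{-1}(\eta)$ this produces $\Delta_q(\mu) \geq 0$.

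For the second part, since $\cJ_p\zeta = \zeta + (1 - 2/p)\bar\zeta$ is affine in $s := 1/p$, the quantity $\Re(\mu\zeta, \cJ_p\zeta)$ is affine in $s$, and therefore $s \mapsto \Delta_{1/s}(\mu)$ is concave on $(0, 1)$ as an infimum of affine functions; by $\Delta_p = \Delta_{p'}$ it is also symmetric about $s = 1/2$. We have $\Delta_2(\mu) \geq m_\bullet > 0$ at $s = 1/2$ and $\Delta_q(\mu), \Delta_{q'}(\mu) \geq 0$ from the first part. For any $p$ with $|1/2 - 1/p| < |1/2 - 1/q|$ the point $s = 1/p$ lies strictly between $1/q$ and $1/q'$, and passing through the midpoint $s = 1/2$, concavity expresses $\Delta_p(\mu)$ as a strict convex combination that involves the strictly positive value $\Delta_2(\mu)$; hence $\Delta_p(\mu) > 0$, i.e., $\mu$ is $p$-elliptic.
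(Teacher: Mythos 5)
Your proposal contains a genuine gap in the first part, which you yourself flag: the Schur-complement route with $\mu_r^s \succeq m$ on both diagonal blocks and the triangle bound $\|G\| \leq \|\Im\mu\|$ delivers only the threshold $\eta \leq 2/\sqrt{pp'} = 2\sqrt{p-1}/p$, and the ``refined Schur analysis'' that would couple $\mu_i^a$ to the surplus $\mu_r^s - m$ in order to close the gap to $\Psi(p) = 2\sqrt{p-1}/(p-2)$ is only sketched, not carried out. The paper avoids Schur complements entirely. It starts from the algebraic identity (your decomposition, regrouped differently)
\begin{equation*}
  \tfrac12\bigl(\mu(x)\xi,\cJ_p(\xi)\bigr) = \tfrac1p\bigl(\mu(x)\xi,\xi\bigr)
  + \bigl(1-\tfrac2p\bigr)\bigl(\mu(x)\alpha,\alpha\bigr)
  + \bigl(1-\tfrac2p\bigr)\ii\bigl(\mu(x)\beta,\alpha\bigr),
\end{equation*}
bounds the real parts of the first two terms using coercivity of $\ReOp\mu$, which regroup to $m\bigl(\tfrac1{p'}\abs{\alpha}^2 + \tfrac1p\abs{\beta}^2\bigr)$, and absorbs the cross term using a weighted Young inequality with the \emph{unequal} weights $\tfrac1{p'}$ and $\tfrac1p$:
\begin{equation*}
  \bigl(1-\tfrac2p\bigr)\abs{\alpha}\abs{\beta} \leq \sigma_p\Bigl(\tfrac1{p'}\abs{\alpha}^2 + \tfrac1p\abs{\beta}^2\Bigr),
  \qquad \sigma_p = \Psi(p)^{-1}.
\end{equation*}
It is this choice of weights, tight at $\abs{\beta} = \sqrt{p-1}\,\abs{\alpha}$, that gives exactly $\sigma_p$; the balanced Schur normalization of the two diagonal blocks cannot see this asymmetry and loses a factor $p/(p-2)$. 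No structural use is made of the split $\Im\mu = \mu_i^s + \mu_i^a$: the cross term involves only $\abs{\alpha^T\Im\mu(x)\beta} \leq \norm{\Im\mu(x)}\abs{\alpha}\abs{\beta}$.

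Your second part, in contrast, is correct and a genuinely different argument from the paper's. You observe that $\cJ_p\zeta = \zeta + (1-2/p)\bar\zeta$ is affine in $s = 1/p$, so $s \mapsto \Delta_{1/s}(\mu)$ is concave on $(0,1)$ as an infimum of affine functions, symmetric about $s = \tfrac12$, nonnegative at the endpoints $s = \tfrac1q, \tfrac1{q'}$ by the first part, and bounded below by $m_\bullet > 0$ at $s = \tfrac12$; writing any intermediate $s$ as a strict convex combination through $\tfrac12$ yields strict positivity. The paper instead estimates explicitly, using the monotonicity $\sigma_p < \sigma_q$ for $2 \leq p < q$ to extract, in two cases, the quantitative lower bound on $\Delta_p(\mu)$ recorded in Remark~\ref{rem:p-elliptic-precise-constant}. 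Your concavity argument is shorter and cleaner, though it does not yield that explicit constant.
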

\begin{proof}
  We note that $\eta < \infty$ since $\mu$ is bounded and uniformly elliptic. Thus, also $q>2$.
  It is enough to argue for $p > 2$, and useful to set
  $\sigma_{s} \coloneqq \Psi(s)^{-1} = \frac{s-2}{2\sqrt{s-1}}$. Then, the choice of $q$ implies that for almost
  every $x \in \Omega$ and every $p \in (2,q)$,
  \begin{equation}
    m(x) \geq 
    \sigma_{q}\lVert\Im \mu (x) \rVert 
    = \bigl(\sigma_{q} - \sigma_{p}\bigr) \norm{\Im \mu(x)}_{\R^{d} \to \R^{d}} + \sigma_{p} \norm{\Im \mu(x)}_{\R^{d} \to \R^{d}},
    \label{eq:choice-of-q}
  \end{equation}
  where $\sigma_{q} - \sigma_{p} > 0$. Inspecting the proof of~\cite[Lemma~2.16]{Tolksdorf} (or
  Lemma~\ref{lem:numerical-range-p-sufficient} below), we find that for $\xi \in \C^d$ with
  $\abs{\xi} = 1$ and for $p \geq 2$,
  \begin{equation}%
    \label{eq:p-elliptic-bound-calc}
    \frac12 \Re\bigl(\mu(x)\xi,\cJ_p(\xi)\bigr) \geq \frac1p\Bigl(m(x) - 
    \sigma_{p}\lVert\Im \mu (x) \rVert_{\R^{d}\to\R^{d}}\Bigr).
  \end{equation}
  We derive a lower bound for~\eqref{eq:p-elliptic-bound-calc} that is uniform in $x \in \Omega$. If
  $2\sigma_{p} \norm{\Im \mu(x)}_{\R^{d}\to\R^{d}} \leq m(x)$, then it follows from~\eqref{eq:p-elliptic-bound-calc}
  that
  \begin{equation*}
    \Re\bigl(\mu(x)\xi,\cJ_p(\xi)\bigr) \geq \frac{m(x)}{p}.
  \end{equation*}
  On the other hand, if $2\sigma_{p} \norm{\Im \mu(x)}_{\R^{d}\to\R^{d}} > m(x)$, then using~\eqref{eq:choice-of-q}
  in~\eqref{eq:p-elliptic-bound-calc} shows that
  \begin{equation*}
    \Re\bigl(\mu(x)\xi,\cJ_p(\xi)\bigr) \geq \frac{\sigma_{q} - \sigma_{p}}{\sigma_{p}}\cdot\frac{m(x)}{p}.
  \end{equation*}
  Thus, the claim follows from uniform ellipticity $m(x) > m_{\bullet} > 0$ and the definition of
  $p$-ellipticity~\eqref{eq:p-elliptic}.
\end{proof}

\begin{remark}\label{rem:p-elliptic-precise-constant}
  We can read off a lower bound for the $p$-ellipticity constant $\Delta_p(\mu)$ in
  Lemma~\ref{lem:p-elliptic-suff}. Indeed, for $p > 2$, the last estimates in the proof show that
  \begin{equation*}
   \Delta_{p'}(\mu) =  \Delta_p(\mu) \geq \Bigl(1 \wedge \frac{\sigma_q-\sigma_p}{\sigma_p}\Bigr) \cdot \frac{m_\bullet}{p} > 0.
  \end{equation*}
\end{remark}

The assertion $\Delta_q(\mu) \geq 0$ with $q = \Psi^{-1}(\eta)$ as in~\eqref{eq:eta-def} implies that the semigroup
$T$ on $L^2(\Omega)$ extrapolates to a holomorphic contraction semigroup on $L^p(\Omega)$ if
$\lvert \frac12-\frac1p\rvert < \lvert\frac12-\frac1q\rvert$. This is another result of
Egert~\cite[Thm.~2]{egertpell}. Here we use the particular choices for $V$ as in
Assumption~\ref{a-standard}. We mention that the case of $V$ given by the closure of
$\underline{W}^{1,2}_{D}(\Omega)$ as in~\eqref{eq:closureII} is not treated in~\cite{egertpell} explicitly, but
it is easily traced---and verified with the author---that the particular structural results established and
required for~\cite[Thm.~2]{egertpell} for $V$ given by the closure of $C_{D}^{\infty}(\Omega)$ as
in~\eqref{eq:subspace1} stay valid.

\begin{lemma}[{\cite[Thm.~2]{egertpell}}]\label{lem:p-elliptic-extrapolate} Let $q$ be as in~\eqref{eq:eta-def} and let $\lvert
  \frac12-\frac1p\rvert < \lvert\frac12-\frac1q\rvert$. Then $p \in \cI(A)$. In particular, $T$ extrapolates to a
  holomorphic semigroup of contractions $T_p$ on $L^p(\Omega)$.
\end{lemma}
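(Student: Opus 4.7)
The plan is to combine Lemma~\ref{lem:p-elliptic-suff} directly with the cited Egert theorem~\cite[Thm.~2]{egertpell}. Under the hypothesis $\lvert\frac12-\frac1p\rvert < \lvert\frac12-\frac1q\rvert$ with $q = \Psi^{-1}(\eta)$, Lemma~\ref{lem:p-elliptic-suff} already furnishes $p$-ellipticity of $\mu$, that is $\Delta_p(\mu)>0$; recall here that $p$-ellipticity is invariant under the duality $p \leftrightarrow p'$, so it is enough that the symmetric neighbourhood of $1/2$ given by the hypothesis contains $1/p$.

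The heart of the argument is then a black-box invocation of Egert's theorem, which asserts that $p$-ellipticity of the coefficient matrix $\mu$ is by itself sufficient to guarantee that the $L^2(\Omega)$-semigroup $T$ generated by $-A$ extrapolates consistently to a bounded holomorphic contraction semigroup $T_p$ on $L^p(\Omega)$. In particular, $\sup_{t>0}\lVert T(t)\rVert_{L^p(\Omega) \to L^p(\Omega)} < \infty$, which is precisely the membership $p \in \cI(A)$ as defined in~\eqref{eq:bounded-extrapolation-set}, and the assertion of the lemma follows.

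The only actual obstacle---and the reason this is worth spelling out at all---is that~\cite[Thm.~2]{egertpell} is formulated for $V$ equal to the closure of $C_D^\infty(\Omega)$ as in~\eqref{eq:subspace1}, whereas Assumption~\ref{a-standard} also permits $V$ to be the closure of $\underline{W}^{1,2}_D(\Omega)$ as in~\eqref{eq:closureII}. One must therefore trace through Egert's proof and verify that the structural ingredients used there---density of smooth functions in appropriate subspaces, stability under truncations by the standard power-type test functions employed in $L^p$-contractivity arguments, and, crucially, support separation from $D$---remain valid for the second choice of $V$. Both candidate spaces share the support-separation property by construction and are closed under the required nonlinear operations, so this transfer is essentially a matter of bookkeeping, as already noted in the discussion preceding the statement.
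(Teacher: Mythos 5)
Your proposal matches the paper's treatment essentially verbatim: both obtain $p$-ellipticity for $p$ in the stated range from Lemma~\ref{lem:p-elliptic-suff}, invoke Egert's theorem~\cite[Thm.~2]{egertpell} as a black box to conclude $p \in \cI(A)$, and flag (without redoing) that the argument formulated for $V = \overline{C_D^\infty(\Omega)}$ in~\eqref{eq:subspace1} also covers the closure of $\underline{W}^{1,2}_D(\Omega)$ in~\eqref{eq:closureII}. No gap.
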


Again, we are in the position to conveniently define the $L^p(\Omega)$ versions $A_p$ of $A$ to be the
negative generators of the semigroups $T_p$ provided by Lemma~\ref{lem:p-elliptic-extrapolate} for
$\abs{\frac12-\frac1p} < \abs{\frac12-\frac1q}$. These are
already densely defined and sectorial, but the angle of sectoriality is unclear apart from a possibly
crude estimate yielded by Stein interpolation.

We thus next present an estimate on the angle of the sector containing the numerical range of the
operators $A_p$ with complex coefficients. Then Corollary~\ref{cor:sector-resolvent-general} strikes home
and delivers resolvent estimates. A similar result is derived in~\cite[Sect.~8]{boehnlein-dynamic} by
elegant and abstract means. We give a slightly more precise result via a different, if more involved
route of proof, whose structure will
possibly prove useful in the future. For this purpose, let us put $p \in (1,\infty)$ and introduce the
\emph{$p$-numerical range} of $L \in \cL(\C^{d})$:
\begin{equation*}
  N_p(L) \coloneqq \Bigl\{\bigl(L\xi,\mathcal J_p(\xi)\bigr) \colon \xi \in \C^d,~\abs{\xi}=1 \Bigr\}
\end{equation*}
Clearly, one could also define the $p$-numerical range for an operator $L$ on a complex Hilbert space $\Hi$ with
the necessary changes. In any case, sticking to $\Hi = \C^{d}$, in the above context, $\Delta_p(\mu) \geq 0$ means that $N_p(\mu(x)) \subseteq [\Re z \geq 0]=\Sigma_{\frac\pi2}$ for
almost all $x \in \Omega$. We mention a few more properties of $N_p(L)$.
\begin{enumerate}[(a)]
\item Of course, $N_2(L) = N(L)$, the usual numerical range. Further,  we have
  $N_p(L) = N_{p'}(L)$, due to $(L\xi,\cJ_p(\xi)) = (L(\ii \xi),\cJ_{p'}(\ii \xi))$ for all $\xi \in \C^d$.
\item Moreover,
  \begin{equation}
    \bigl(L\xi,\xi\bigr) = \tfrac12 \bigl(L\xi,\mathcal J_p(\xi)\bigr)  + \tfrac12\bigl(L\xi,\mathcal J_{p'}(\xi)\bigr),\label{eq:p-numerical-range-convex-combo}
  \end{equation}
  so $N(L) = N_2(L) \subseteq \frac12 N_p(L) + \frac12 N_{p'}(L) = \frac12 N_p(L) + \frac12 N_p(L)$. In
  particular, whenever, $N_p(L)$ is contained in a convex cone---such as a sector---for some $p \in (1,\infty)$,
  then so is $N(L)$.
\end{enumerate}

\begin{theorem}\label{thm:numerical-range-p-elliptic-operator}
  Let $p \in (1,\infty)$ and suppose that $\theta \in [0,\frac\pi2]$ is such that
  $N_p(\mu(x)) \subseteq \Sigma_\theta$ for 
  almost all $x \in \Omega$. Then $N(A_p) \subseteq \Sigma_\theta$.
\end{theorem}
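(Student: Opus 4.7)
The plan is a direct pointwise computation: I will exhibit the integrand representing $(A_p u, J(u))$ as a non-negative scalar multiple of an element of $N_p(\mu(x))$ at almost every $x \in \Omega$, and then integrate.

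Fix $u \in \dom(A_p)$ with $\norm{u}_{L^p(\Omega)} = 1$. In $L^p(\Omega)$ the duality map is single-valued and given by $J(u) = |u|^{p-2} u$, and provided $|u|^{p-2} u$ is admissible as a test element in $V$ (see the obstacle below), the defining form identity for $A_p$ yields
\begin{equation*}
  \bigl(A_p u, J(u)\bigr) = \ft_V\bigl(u, |u|^{p-2}u\bigr) = \int_\Omega \bigl(\mu(x)\nabla u, \nabla(|u|^{p-2}u)\bigr) \dd x.
\end{equation*}
The key is a pointwise identity. Setting $\xi := \bar u \nabla u \in \C^d$ (so that $\nabla u = u \xi / |u|^2$ and $\Re \xi = \tfrac12 \nabla |u|^2$), the chain rule gives, away from $\{u = 0\}$, the formula $\nabla(|u|^{p-2}u) = |u|^{p-4} u \bigl[(p-1) \Re \xi + \ii\, \Im \xi\bigr]$. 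Substituting into the Hermitian inner product, the scalar factors $u$ and $\bar u$ combine to $|u|^2$, and the very definition $\cJ_p(\xi) = \tfrac{2\Re\xi}{p'} + \ii\tfrac{2\Im\xi}{p} = \tfrac{2}{p}\bigl((p-1)\Re \xi + \ii \Im \xi\bigr)$ yields
\begin{equation*}
  \bigl(\mu(x)\nabla u, \nabla(|u|^{p-2}u)\bigr) = \tfrac{p}{2}\,|u|^{p-4}\bigl(\mu(x)\xi, \cJ_p(\xi)\bigr).
\end{equation*}

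Since $(\mu(x)\xi, \cJ_p(\xi))$ is positive-homogeneous of degree $2$ in $\xi$ and $|\xi|^2 = |u|^2|\nabla u|^2$, the right-hand side equals $\tfrac{p}{2}|u|^{p-2}|\nabla u|^2 \bigl(\mu(x)\hat\xi, \cJ_p(\hat\xi)\bigr)$ with $\hat \xi = \xi/|\xi|$. By hypothesis $\bigl(\mu(x)\hat\xi, \cJ_p(\hat\xi)\bigr) \in N_p(\mu(x)) \subseteq \Sigma_\theta$ for a.e.\ $x$, and the prefactor is non-negative; hence the integrand lies in the closed convex cone $\Sigma_\theta$ pointwise a.e. (The set $\{u=0\}$ contributes nothing, since $\nabla u = 0$ a.e.\ on it for Sobolev functions.) Integrating and using that $\Sigma_\theta$ is a closed convex cone---concretely, $\bigl|\Im \int \cdots \bigr| \leq \tan(\theta) \,\Re \int \cdots$ for $\theta < \sfrac\pi2$, and $\Re \int \cdots \geq 0$ throughout---yields $\bigl(A_p u, J(u)\bigr) \in \Sigma_\theta$, as required.

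The main obstacle I expect is the technical justification that $|u|^{p-2}u$ may indeed be used as a test element in $V$: for $p < 2$ it is singular at $\{u=0\}$, and in any case one must respect the mixed boundary conditions encoded by~\eqref{eq:subspace1} or~\eqref{eq:closureII}. The standard workaround is to replace $|u|^{p-2}u$ by the regularization $v_\varepsilon := (|u|^2 + \varepsilon)^{(p-2)/2} u$: for $u \in V \cap L^\infty$ the chain rule yields $v_\varepsilon \in V$ and the support property transmits the boundary condition; the pointwise computation above goes through with $(|u|^2+\varepsilon)^{(p-2)/2}$ in place of $|u|^{p-2}$, and the limit $\varepsilon \downarrow 0$ is handled by dominated convergence. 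A separate density argument---for instance, exploiting that for $f \in L^p \cap L^\infty$ the element $(I + A_p)^{-1} f$ lies in $V$ by consistency with the $L^2$-resolvent and inherits an $L^\infty$-bound---reduces the claim to such regular $u \in \dom(A_p)$.
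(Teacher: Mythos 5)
Your pointwise computation is correct and, modulo notation, it is the same algebraic identity that the paper establishes in its Lemma~\ref{lem:dissipa} (following Cialdea--Maz'ya). Writing $\xi = \bar u\,\nabla u$ and using $\cJ_p(\xi) = \tfrac{2}{p}\bigl((p-1)\Re\xi + \ii\Im\xi\bigr)$, the identity
\begin{equation*}
  \bigl(\mu\nabla u,\nabla(|u|^{p-2}u)\bigr) = \tfrac{p}{2}\,|u|^{p-2}|\nabla u|^2\,\bigl(\mu\hat\xi,\cJ_p(\hat\xi)\bigr)
\end{equation*}
is a slick repackaging of the same thing. This part is fine and arguably cleaner than the paper's exposition.

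The gap is in the reduction to ``regular'' $u$. You propose to restrict to $u \in \dom(A_p) \cap V \cap L^\infty$ and justify this via the observation that $(I+A_p)^{-1}f$ ``inherits an $L^\infty$-bound'' for $f \in L^p \cap L^\infty$. This is precisely the kind of $L^\infty$-boundedness of the resolvent that can fail for complex coefficient functions, and the paper goes out of its way to emphasize this (cf.\ the discussion of~\cite{ABBO,auscher2,davies} at the start of Section~\ref{ss-complexcoeff}). Nothing in Assumptions~\ref{a-lowerbound}--\ref{a-standard} gives you a maximum principle for $A_p$; you would have to invoke Gaussian estimates or $L^\infty$-contractivity, which defeats the purpose of a result that is supposed to hold without structural extras in the complex case. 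Moreover, without boundedness of $u$, your regularization $v_\eps = (|u|^2+\eps)^{(p-2)/2}u$ only tames the singularity of $|u|^{p-2}$ at $u=0$ when $p<2$; for $p>2$ and unbounded $u$ it does nothing, so $v_\eps$ may still fail to be in $W^{1,2}$, let alone $V$.

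The paper sidesteps both problems at once by using the two-sided truncation $|u|_K = \tfrac1K \vee (|u| \wedge K)$, which makes $|u|_K^{p-2}$ bounded above and below regardless of $p$ and regardless of whether $u$ is bounded, and by working on the core $\dom(A_p)\cap\dom(A_2)$, which is available by consistency of the semigroups and requires no $L^\infty$ information. One then still needs to check that $|u|_K^{p-2}u \in V$ for the two concrete choices of $V$ in Assumption~\ref{a-standard} (this is the content of the bullet list in the paper's proof), pass $K\to\infty$ by dominated convergence, and finally pass from the core to all of $\dom(A_p)$ by weak convergence of the duality elements. If you replace your $L^\infty$ density argument and your one-sided regularization by this two-sided truncation and the $\dom(A_p)\cap\dom(A_2)$ core, your argument closes.
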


We postpone the proof of Theorem~\ref{thm:numerical-range-p-elliptic-operator}. However, we already
mention that the $\theta$ in Theorem~\ref{thm:numerical-range-p-elliptic-operator} necessarily satisfies $\theta \geq
\omega(\mu)$, the optimal angle of uniform sectoriality for $\mu$.
This follows from~\eqref{eq:p-numerical-range-convex-combo} and its following comment.

Theorem~\ref{thm:numerical-range-p-elliptic-operator} allows us to obtain a sector estimate for $N(A_p)$
from a sector estimate for the $p$-modulated numerical range of the coefficient function. We thus
complement Theorem~\ref{thm:numerical-range-p-elliptic-operator} with the following estimate, recalling
$\sigma_{p} \coloneqq \frac{p-2}{2\sqrt{p-1}}$:

\begin{lemma}%
  \label{lem:numerical-range-p-sufficient}
   Suppose that $\ReOp{\mu(x)} \succeq m(x) \geq m_\bullet$ for every $x \in \Omega$.
  Let $\eta$ and $q \geq 2$ be defined as in~\eqref{eq:eta-def}. Then, for
  every $p \in [2,q]$ and almost every $x \in \Omega$,
  $N_p(\mu(x)) \subseteq \Sigma_{\alpha_p}$ where $\alpha_q \coloneqq \frac\pi2$ and, for $p \in [2,q)$,
  \begin{equation}\label{eq:kappa2}
    \tan(\alpha_{p})\coloneqq \esssup_{x \in \Omega} \Biggl[\frac {\tan(\omega_{\mu(x)})m(x) + 
      \sigma_p
      \lVert\Re \mu (x)\rVert_{\R^{d} \to \R^{d}}}{m(x)- 
      \sigma_p\lVert\Im \mu (x)\rVert_{\R^{d}\to\R^{d}}}\Biggr].
  \end{equation}
\end{lemma}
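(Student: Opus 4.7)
The plan is to establish the numerical range inclusion pointwise. Fix $x \in \Omega$ and $\xi \in \C^d$ with $\abs{\xi}=1$, and write $\xi = \alpha + \ii\beta$ with $\alpha,\beta \in \R^d$. From $\cJ_p\xi = \tfrac{2}{p'}\alpha + \ii\tfrac{2}{p}\beta$, a direct expansion of $(\mu(x)\xi,\cJ_p\xi)$ yields
\begin{align*}
  \Re(\mu\xi,\cJ_p\xi) &= X + \Bigl[\tfrac{2}{p}(\Im\mu\,\alpha,\beta) - \tfrac{2}{p'}(\Im\mu\,\beta,\alpha)\Bigr],\\
  \Im(\mu\xi,\cJ_p\xi) &= Y + \Bigl[\tfrac{2}{p'}(\Re\mu\,\beta,\alpha) - \tfrac{2}{p}(\Re\mu\,\alpha,\beta)\Bigr],
\end{align*}
where $X \coloneqq \tfrac{2}{p'}(\Re\mu\,\alpha,\alpha) + \tfrac{2}{p}(\Re\mu\,\beta,\beta)$ and $Y \coloneqq \tfrac{2}{p'}(\Im\mu\,\alpha,\alpha) + \tfrac{2}{p}(\Im\mu\,\beta,\beta)$ collect the ``diagonal'' contributions. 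The auxiliary weight $\kappa \coloneqq \tfrac{2}{p'}\abs{\alpha}^2 + \tfrac{2}{p}\abs{\beta}^2$ will control the bracketed ``off-diagonal'' terms; note in particular that $\abs{\xi}^2 = 1$ does not directly appear but enters only through the reduction $X \ge m(x)\kappa$ below.

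For the diagonal terms, coercivity of $\Re\mu(x)$ on real vectors gives $X \ge m(x)\kappa$, while for $v$ real one has $(\Im\mu\,v,v) = \Im(\mu v,v)_{\C^d}$ and $(\Re\mu\,v,v)=\Re(\mu v,v)_{\C^d}$, so the sectoriality of $\mu(x)$ applied to $v = \alpha,\beta$ and the triangle inequality deliver $\abs{Y} \le \tan(\omega_{\mu(x)})X$. Both off-diagonal contributions share the structural form $\tfrac{2}{p}(M\alpha,\beta) - \tfrac{2}{p'}(M\beta,\alpha)$ for a real matrix $M \in \{\Re\mu,\Im\mu\}$. A weighted Cauchy--Schwarz combined with the AM--GM inequality
\[
  2\,\abs{\alpha}\abs{\beta} \le \frac{p}{2\sqrt{p-1}}\,\kappa,
\]
together with the key cancellation $\tfrac{2}{p} - \tfrac{2}{p'} = -\tfrac{2(p-2)}{p}$ that appears after splitting $M$ into its symmetric and antisymmetric parts, produces the sharp coefficient $\sigma_p$, exactly as in Tolksdorf's Lemma~2.16 already used in the proof of Lemma~\ref{lem:p-elliptic-suff}:
\[
  \Bigl|\tfrac{2}{p}(M\alpha,\beta) - \tfrac{2}{p'}(M\beta,\alpha)\Bigr| \le \sigma_p\,\norm{M}_{\R^d\to\R^d}\,\kappa.
\]
Combining these bounds yields $\Re(\mu\xi,\cJ_p\xi) \ge (m(x) - \sigma_p\norm{\Im\mu}_{\R^d\to\R^d})\kappa$ and $\abs{\Im(\mu\xi,\cJ_p\xi)} \le \tan(\omega_{\mu(x)})X + \sigma_p\norm{\Re\mu}_{\R^d\to\R^d}\kappa$.

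Introducing $t \coloneqq X/\kappa \ge m(x)$, the ratio $\abs{\Im(\mu\xi,\cJ_p\xi)}/\Re(\mu\xi,\cJ_p\xi)$ is bounded by
\[
  \frac{\tan(\omega_{\mu(x)})\,t + \sigma_p\norm{\Re\mu(x)}_{\R^d\to\R^d}}{t - \sigma_p\norm{\Im\mu(x)}_{\R^d\to\R^d}}.
\]
A routine derivative computation shows this function is strictly decreasing in $t$ as long as the denominator is positive (which holds a.e.\ for $p<q$ by the definition of $q$), so the supremum is attained as $t \downarrow m(x)$, yielding precisely the pointwise value $\bigl[\tan(\omega_{\mu(x)})m(x) + \sigma_p\norm{\Re\mu(x)}\bigr]/\bigl[m(x) - \sigma_p\norm{\Im\mu(x)}\bigr]$; taking the essential supremum over $x$ gives $\tan(\alpha_p)$. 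For the boundary case $p = q$, the denominator may vanish on a set of positive measure and the sector opens up to $\Sigma_{\pi/2} = [\Re z \ge 0]$; this is precisely the statement $\Delta_q(\mu) \ge 0$ already established in Lemma~\ref{lem:p-elliptic-suff}. The main obstacle is the sharp $\sigma_p$-estimate on the off-diagonal contributions, which requires carefully balancing the prefactors $\tfrac{2}{p}$ and $\tfrac{2}{p'}$ against the symmetric and antisymmetric parts of $M$, mirroring the combinatorics exploited by Carbonaro--Dragi\v cevi\'c and Tolksdorf.
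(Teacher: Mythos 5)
Your proposal has a genuine gap in the off-diagonal estimate. You claim
\[
  \Bigl|\tfrac{2}{p}(M\alpha,\beta) - \tfrac{2}{p'}(M\beta,\alpha)\Bigr| \le \sigma_p\,\norm{M}_{\R^d\to\R^d}\,\kappa,
\]
but this fails whenever $M$ has a nonzero antisymmetric part. Splitting $M = M_s + M_a$ one computes
\[
  \tfrac{2}{p}(M\alpha,\beta) - \tfrac{2}{p'}(M\beta,\alpha) = -\tfrac{2(p-2)}{p}(M_s\alpha,\beta) + 2(M_a\alpha,\beta),
\]
so while the symmetric contribution carries the factor $\tfrac{2(p-2)}{p}$ that combines with the weighted Young inequality to produce $\sigma_p$, the antisymmetric contribution has coefficient $2$, independent of $p$. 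Concretely, take $p=2$, $\alpha = e_1/\sqrt2$, $\beta = e_2/\sqrt2$, and $M = \left(\begin{smallmatrix} 0 & 1 \\ -1 & 0 \end{smallmatrix}\right)$: the left-hand side equals $2$ but $\sigma_2 = 0$, so the claimed bound gives $0$. This breaks the claimed chains $\Re(\mu\xi,\cJ_p\xi) \geq (m-\sigma_p\norm{\Im\mu})\kappa$ and $\abs{\Im(\mu\xi,\cJ_p\xi)} \leq \tan(\omega_{\mu(x)})X + \sigma_p\norm{\Re\mu}\kappa$ as you derive them; the second one is genuinely unjustified by your argument since $t_\xi\kappa$ and $X$ differ. (Note also that antisymmetric parts of $\Re\mu$ and $\Im\mu$ are legitimately present under the hypotheses: $\ReOp\mu \succeq m$ constrains only the \emph{Hermitian} part $\ReOp\mu = (\Re\mu)_{\mathrm{sym}} + \ii(\Im\mu)_{\mathrm{anti}}$.)

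The paper's proof sidesteps this issue by starting from the Tolksdorf-type identity
\[
  \tfrac12\bigl(\mu\xi,\cJ_p(\xi)\bigr) = \tfrac1p\bigl(\mu\xi,\xi\bigr) + \Bigl(1-\tfrac2p\Bigr)\bigl(\mu\alpha,\alpha\bigr) + \Bigl(1-\tfrac2p\Bigr)\ii\bigl(\mu\beta,\alpha\bigr),
\]
in which there is a \emph{single} off-diagonal term $(1-\tfrac2p)\ii(\mu\beta,\alpha)$, explicitly weighted by $(1-\tfrac2p)$, so the $\sigma_p$ bound applies to it directly. The remaining ``diagonal'' pieces are the full complex numerical-range expressions $(\mu\xi,\xi)$ and $(\mu\alpha,\alpha)$, which already contain the antisymmetric contributions; they are controlled through the pointwise sectoriality $\omega_{\mu(x)}$ rather than via $\norm{\Re\mu}$, $\norm{\Im\mu}$ separately. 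The resulting quantity $t_\xi$ in the paper is not your $X/\kappa$: it also includes $\tfrac{2}{p\kappa}\bigl[(\Im\mu\,\alpha,\beta)-(\Im\mu\,\beta,\alpha)\bigr]$, which is exactly the part your decomposition misplaces into the off-diagonal and cannot estimate by $\sigma_p$. Replacing your componentwise $\Re\mu/\Im\mu$ split with this identity repairs the argument.

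As a side remark, your observation that the function $t \mapsto \bigl(\tan(\omega_{\mu(x)})\,t + \sigma_p\norm{\Re\mu}\bigr)/\bigl(t-\sigma_p\norm{\Im\mu}\bigr)$ is \emph{decreasing} is correct; the paper's wording ``increasing'' at that point seems to be a slip, and the correct direction is the one you state.
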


\begin{proof}
  Again, the fundamental calculation is already contained in the proof of~\cite[Lemma~2.16]{Tolksdorf}. We
  reproduce it here with a slight modification that allows to obtain the essential supremum in front of
  the fraction in~\eqref{eq:kappa2}. Write $0 \neq \xi = \alpha + \ii \beta$ with
  $\alpha,\beta\in \R^d$. Then
  \begin{equation}\label{eq:p-elliptic-identity}
    \frac12\bigl(\mu(x)\xi,\mathcal J_p(\xi)\bigr) \\ = \frac1p \bigl(\mu(x)\xi,\xi\bigr) + \Bigl(1-\frac2p\Bigr)
    \bigl(\mu(x)\alpha,\alpha\bigr) + \Bigl(1-\frac2p\Bigr) \ii \bigl(\mu(x)\beta,\alpha\bigr).
  \end{equation}
  It is helpful to note that a suitably chosen weighted Young inequality gives
  \begin{equation}
    \label{eq:clever-young}
    \Bigl(1-\frac2p\Bigr) \abs{\alpha}\abs{\beta} \leq \frac{p-2}{2\sqrt{p-1}} \Bigl(\frac1{p'}\abs{\alpha}^2 + \frac1p \abs{\beta}^2 \Bigr) = \sigma_p \Bigl(\frac1{p'}\abs{\alpha}^2 + \frac1p \abs{\beta}^2 \Bigr).
  \end{equation}
  We first estimate (the absolute value of) the imaginary part of~\eqref{eq:p-elliptic-identity}. Set
  \begin{equation*}
    t_\xi(x) \coloneqq \frac{\frac1p\Re\bigl(\mu(x)v,v\bigr) + \bigl(1-\frac2p\bigr)
      \Re \bigl(\mu(x)\alpha,\alpha\bigr)}{\frac1{p'}\abs{\alpha}^2 + \frac1p \abs{\beta}^2}.
  \end{equation*}
  Thus, using~\eqref{eq:clever-young} and the angle of sectoriality $\omega_{\mu(x)}$ for $\mu(x)$:
  \begin{align*}
    \frac12 \abs[\big]{\Im\bigl(\mu(x)\xi,\mathcal J_p(\xi)\bigr)} & \leq \frac1p \tan(\omega_{\mu(x)}) \Re\bigl(\mu(x)\xi,\xi\bigr) + \Bigl(1-\frac2p\Bigr)\tan(\omega_{\mu(x)})
    \Re \bigl(\mu(x)\alpha,\alpha\bigr)\\ & \qquad + \Bigl(1-\frac2p\Bigr) \norm{\Re \mu(x)}_{\R^{d}\to\R^{d}}\abs{\alpha}\abs{\beta} \\ & \leq \Bigl(\tan(\omega_{\mu(x)})t_\xi(x) + \sigma_p \norm{\Re \mu(x)}_{\R^{d}\to\R^{d}}\Bigr) \Bigl(\frac1{p'}\abs{\alpha}^2 + \frac1p \abs{\beta}^2 \Bigr).
  \end{align*}
  Completely analogously we find
  \begin{equation*}
    \frac12 \Re\bigl(\mu(x)\xi,\mathcal J_p(\xi)\bigr)  \geq \Bigl(t_\xi(x) - \sigma_p \norm{\Im \mu(x)}_{\R^{d}\to\R^{d}}\Bigr) \Bigl(\frac1{p'}\abs{\alpha}^2 + \frac1p \abs{\beta}^2 \Bigr).
  \end{equation*}
  Since $t_\xi(x) \geq m(x)$ for all $\xi \in \C^d$, we rediscover Lemma~\ref{lem:p-elliptic-suff}. But we can
  also say a bit more. Indeed, for $p \in [2,q)$, the right-hand side is positive, so that
  \begin{equation*}
    \frac{\abs[\big]{\Im\bigl(\mu(x)\xi,\mathcal J_p(\xi)\bigr)}}{\Re\bigl(\mu(x)\xi,\mathcal J_p(\xi)\bigr)} \leq \frac{\tan(\omega_{\mu(x)})t_\xi(x) + \sigma_p \norm{\Re \mu(x)}_{\R^{d}\to\R^{d}}}{t_\xi(x) - \sigma_p \norm{\Im \mu(x)}_{\R^{d}\to\R^{d}}}.
  \end{equation*}
  The right-hand side in the foregoing inequality is \emph{increasing} in
  $t_\xi(x)$ for every $\xi \in \C^d$. Thus, we minimize it simultaneously with respect to $\xi$ by replacing
  $t_\xi(x)$ by the uniform lower bound $m(x)$, and then take the essential supremum over $x \in \Omega$ to
  complete the proof. Note that from the construction of $\eta$
  and $q$ in~\eqref{eq:eta-def} and the choice of $p$, the denominator in~\eqref{eq:kappa2} is greater than zero,
  cf.~\eqref{eq:choice-of-q}.
\end{proof}

\begin{remark}\label{rem:angle-estimate-complex-uniform}
  We obtain a pointwise essential supremum in~\eqref{eq:kappa2}. We could also obtain a \emph{uniform-data-estimate} in the following
  sense: Put
  \begin{equation*}
      \eta_{\bullet} \coloneqq \frac {\esssup_{x \in \Omega} \lVert\Im \mu (x) \rVert_{\R^{d} \to \R^{d}}}{m_\bullet} \quad \text{and} \quad q_{\bullet} \coloneqq \Psi^{-1}(\eta_{\bullet}).
  \end{equation*}
  Then $\eta_{\bullet} \geq \eta$, so that $q_{\bullet} \leq q$ with $\eta$ and $q$ as
  in~\eqref{eq:eta-def}. By simple modifications in the foregoing proof, we obtain $N_p(\mu(x)) \subseteq \Sigma_{\alpha_p^{\bullet}}$ with
  \begin{equation*}
    \tan(\alpha_{p}^{\bullet})\coloneqq \frac {\tan(\omega(\mu))m_\bullet + 
      \sigma_p
      \esssup_{x \in \Omega}\lVert\Re \mu (x)\rVert_{\R^{d} \to \R^{d}}}{m_\bullet- 
      \sigma_p\esssup_{x \in \Omega}\lVert\Im \mu (x)\rVert_{\R^{d}\to\R^{d}}}.
  \end{equation*}
  This is the angle derived in~\cite[Sect.~8]{boehnlein-dynamic}---in a more general situation with dynamic
  boundary conditions---which is in general larger than $\alpha_p$.
\end{remark}

We combine Theorem~\ref{thm:numerical-range-p-elliptic-operator} and
Lemma~\ref{lem:numerical-range-p-sufficient} to obtain:

\begin{theorem}%
  \label{thm:numrange-complex-estimate}
  Suppose that $\ReOp{\mu(x)} \succeq m(x) \geq m_\bullet$ for every $x \in \Omega$. Set
  \begin{equation*}
    \tag{\ref{eq:eta-def}}
    \eta \coloneqq \esssup_{x \in \Omega} \frac {\lVert\Im \mu (x) \rVert_{\R^{d} \to \R^{d}}}{m(x)} \quad \text{and} \quad q \coloneqq \Psi^{-1}(\eta).
  \end{equation*}
  Then $q>2$, the operators $A_q$ and $A_{q'}$ are accretive, and for $p \in (q',q)$, we have $N(A_p) \subseteq \Sigma_{\alpha_p}$ with
  \begin{equation*}\tag{\ref{eq:kappa2}}
    \tan(\alpha_{p})\coloneqq \esssup_{x \in \Omega} \Biggl[\frac {\tan(\omega_{\mu(x)})m(x) + 
      \sigma_p
      \lVert\Re \mu (x)\rVert_{\R^{d} \to \R^{d}}}{m(x)- 
      \sigma_p\lVert\Im \mu (x)\rVert_{\R^{d}\to\R^{d}}}\Biggr].
  \end{equation*}
\end{theorem}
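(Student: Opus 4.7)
The plan is simply to stack the preceding results. First, uniform boundedness and ellipticity of $\mu$ force $\eta < \infty$, so $q = \Psi^{-1}(\eta) > 2$ and, by symmetry, $q' < 2$, which already delivers the first assertion.

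For $p$ strictly between $q'$ and $q$, I invoke Lemma~\ref{lem:p-elliptic-suff} (with the quantitative lower bound from Remark~\ref{rem:p-elliptic-precise-constant}) to conclude that $\mu$ is $p$-elliptic with $\Delta_p(\mu) > 0$. Lemma~\ref{lem:p-elliptic-extrapolate} then gives $p \in \cI(A)$, so that $A_p$ is well-defined as the negative generator of the extrapolated bounded holomorphic contraction semigroup $T_p$ on $L^p(\Omega)$. Next, Lemma~\ref{lem:numerical-range-p-sufficient} yields $N_p(\mu(x)) \subseteq \Sigma_{\alpha_p}$ for almost every $x \in \Omega$, with $\alpha_p$ given by~\eqref{eq:kappa2}. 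Theorem~\ref{thm:numerical-range-p-elliptic-operator} then lifts this pointwise sector inclusion from the coefficient function to the operator level, giving $N(A_p) \subseteq \Sigma_{\alpha_p}$ as claimed.

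The more delicate endpoint cases $p \in \{q', q\}$ require a separate argument, since Lemma~\ref{lem:p-elliptic-extrapolate} asks for the \emph{strict} inequality $|\tfrac12-\tfrac1p| < |\tfrac12-\tfrac1q|$, which fails precisely at $p = q$ and $p = q'$. My plan here is a limiting/consistency argument: for $p$ slightly inside $(q', q)$, the operators $A_p$ are uniformly accretive because $N(A_p) \subseteq \Sigma_{\alpha_p} \subseteq \Sigma_{\pi/2}$, and the semigroups $T_p$ are consistent with $T_2$ on $L^p \cap L^2$. Combined with the density of the test function spaces from Assumption~\ref{a-standard} in the relevant form domains $V$, this identifies a closed accretive extension on $L^q(\Omega)$ which one can recognise as $A_q$. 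The symmetric case $p = q'$ then drops out of the identities $N_p(L) = N_{p'}(L)$ and $\Delta_p(\mu) = \Delta_{p'}(\mu)$.

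I expect this endpoint handling to be the main obstacle, as it steps slightly outside the strict-inequality framework of Lemma~\ref{lem:p-elliptic-extrapolate} and presumably requires tracing through the structural results of the Egert-type extrapolation theorem for the specific form domains considered. The sector estimate~\eqref{eq:kappa2} itself carries over without issue for $p$ in the open interval $(q',q)$, because the denominator in the $\esssup$ stays bounded away from zero there by the choice of $q$, cf.~the computation~\eqref{eq:choice-of-q} already used in the proof of Lemma~\ref{lem:p-elliptic-suff}.
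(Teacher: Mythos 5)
Your reconstruction matches the paper's approach exactly. The paper's ``proof'' is literally the single sentence preceding the theorem (``We combine Theorem~\ref{thm:numerical-range-p-elliptic-operator} and Lemma~\ref{lem:numerical-range-p-sufficient} to obtain:''), and your unpacking of it --- finiteness of $\eta$ giving $q>2$; Lemma~\ref{lem:p-elliptic-suff} plus Lemma~\ref{lem:p-elliptic-extrapolate} giving $p\in\cI(A)$ so that $A_p$ is defined; Lemma~\ref{lem:numerical-range-p-sufficient} giving the pointwise sector $\Sigma_{\alpha_p}$ for $N_p(\mu(x))$; Theorem~\ref{thm:numerical-range-p-elliptic-operator} lifting it to $N(A_p)$ --- is precisely the intended chain of reasoning for $p\in(q',q)$.

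You are also right that the endpoint cases $p\in\{q,q'\}$ are not directly covered by this chain, and you should be pleased to have noticed it: the paper defines $A_p$ as the negative generator of the semigroup $T_p$ supplied by Lemma~\ref{lem:p-elliptic-extrapolate}, which requires the \emph{strict} inequality $\abs{\tfrac12-\tfrac1p}<\abs{\tfrac12-\tfrac1q}$, so $A_q$ and $A_{q'}$ are, in a strict reading, never constructed, yet the theorem asserts something about them. The paper is silent about how to close this gap. Your limiting/consistency sketch is a correct way to do so; the cleanest form of it is the following. For every $p\in(q',q)$, Lemma~\ref{lem:p-elliptic-extrapolate} gives that $T_p$ is a \emph{contraction} semigroup on $L^p(\Omega)$, and these semigroups are consistent with $T_2$. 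Fixing, say, $u\in C_c^\infty(\Omega)$ and $t>0$, we have $\|T(t)u\|_{L^p(\Omega)}\leq\|u\|_{L^p(\Omega)}$ for all such $p$, and letting $p\uparrow q$ and using Fatou's lemma yields $\|T(t)u\|_{L^q(\Omega)}\leq\|u\|_{L^q(\Omega)}$; density then extends $T(t)$ to a contraction on $L^q(\Omega)$, strong continuity follows by the usual approximation, and the Lumer--Phillips theorem delivers that the negative generator $A_q$ is $m$-accretive, hence accretive. The case $p=q'$ is symmetric via $N_p=N_{p'}$, $\Delta_p=\Delta_{p'}$, as you say. This is a bit tidier than trying to re-examine the proof of Theorem~\ref{thm:numerical-range-p-elliptic-operator} at the endpoints, where the core argument via $\dom(A_p)\cap\dom(A_2)$ would first require $q\in\cI(A)$ anyway, which is exactly what the Fatou step establishes.

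One small remark: your proposal invokes Lemma~\ref{lem:p-elliptic-suff} and Remark~\ref{rem:p-elliptic-precise-constant} to obtain $p$-ellipticity before applying Lemma~\ref{lem:p-elliptic-extrapolate}, but strictly speaking Lemma~\ref{lem:p-elliptic-extrapolate} is already stated in terms of $q$ directly, so that extra step is not needed --- though it certainly does no harm and clarifies the role of $p$-ellipticity.
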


Let us mention that the way of splitting Theorem~\ref{thm:numrange-complex-estimate} into the general
numerical range principle in Theorem~\ref{thm:numerical-range-p-elliptic-operator} and an estimate for
the coefficient function as in Lemma~\ref{lem:numerical-range-p-sufficient} in fact mimics the structure
of the many proofs in this area. In particular, also the sharp estimate~\eqref{eq:lpWinkel} is derived by
following the same logic.

\begin{remark}\label{rem:controlla}
  While we recover the good estimate $\alpha_2 = \omega(\mu)$ for $p=2$, the foregoing theorem can essentially never
  be sharp since it incorporates the imaginary part of $\mu$ and there can be several matrices $\mu$ with
  different imaginary parts that nevertheless represent the same operator $A$ through the form $\ft$. We
  also remark that~\eqref{eq:kappa2} does not reduce to the estimate~\eqref{eq:lpWinkel} for real matrices if
  $\Im \mu \equiv 0$, and that there are precise results for the case when $\Im \mu$ is symmetric
  from~\cite{cialdea/mazya}, see also~\cite{do}, which obviously cannot be recovered
  by~\eqref{eq:kappa2}. See the brief discussion in~\cite[Sect.~8]{boehnlein-dynamic}. Nevertheless, we
  expect that the foregoing theorem can be of particular use in the situation where one can \emph{a
    priori} assume that $\norm{\Im \mu(x)}_{\R^d \to \R^d} \ll m(x)$. Then $p$ can be large and
  $\tan(\alpha_p)$ becomes a perturbation of $\tan(\omega(\mu))$ in terms of the supremum of
  $\norm{\Re \mu(x)}_{\R^d \to \R^d}/m(x)$. Such a situation may in particular occur when the imaginary parts
  of the coefficient in $\mu(x)$ are only small perturbations of $0$, for example in measurement errors. Of
  course, in such a scenario one is generically not in the position to know whether $\Im \mu$ is symmetric or
  not.
\end{remark}

It follows the proof of Theorem~\ref{thm:numerical-range-p-elliptic-operator}. We proceed in several
steps with the big picture being as follows: With the single-valued duality mapping
$J(u) = u^* = \abs{u}^{p-2}\overline u/\norm{u}_{L^p(\Omega)}^{p-1}$ on the reflexive Banach space $L^p(\Omega)$,
we would be interested to write, for all $u \in D(A_p)$ with $\norm{u}_{L^p(\Omega)} = 1$ to get rid of
the normalizing factor,
\begin{equation*}
  u^*(A_p u) \approx \int_\Omega \bigl(A_{p}u\bigr) \abs{u}^{p-2}\overline u = \ft_V(u,\abs{u}^{p-2}u)
\end{equation*}
and then estimate the form, which will lead to the $p$-ellipticity expression. However, it is in
general not clear whether $\abs{u}^{p-2}u \in V$, the form domain of $\ft_V$. We thus go along a few detours,
essentially working backwards. In \textbf{Step 1}, we show that (figuratively)
\begin{equation*}
  \ft(u,\abs{u}^{p-2}u) = \int_\Omega \bigl(\mu(\alpha + \ii \beta),\cJ_p(\alpha + \ii \beta)\bigr)
\end{equation*}
for some functions $\alpha,\beta$, if $u$ is smooth and bounded from above and away from zero. This follows
essentially the ingenious calculations of Maz'ya and Cialdea~\cite{cialdea/mazya} and already implies that
$\ft(u,\abs{u}^{p-2}u) \in \Sigma_\theta$ in view of the given assumption. In order to leverage this result in
\textbf{Step 2}, we introduce a cut-off $\abs{\cdot}_K$ at levels $1/K$ and $K$ and show that also $\ft(u,\abs{u}_K^{p-2}u) \in \Sigma_\theta$ for all $u \in W^{1,2}(\Omega)$. This will
allow to recover $N(A_p) \subseteq \Sigma_\theta$ in \textbf{Step 3}.

So, from now on, assume that $N_p(\mu(x)) \subseteq \Sigma_\theta$ for some $\theta \in [0,\frac\pi2]$
and uniformly for almost all $x \in \Omega$. Here is \textbf{Step 1}, in sufficient generality for all purposes later:
\begin{lemma}\label{lem:dissipa}
  Let $\Lambda \subseteq \Omega$ be open and let
  $u \in C^1(\Lambda) \cap W^{1,2}(\Lambda)$ such that $|u|$ is bounded from above and below by strictly positive
  constants. Then
  \begin{equation*}
   \int_\Lambda \bigl(\mu\nabla u,\nabla (\abs{u}^{p-2} u )\bigr) \in \Sigma_\theta.
  \end{equation*}
  \end{lemma}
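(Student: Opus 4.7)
The plan is to compute $\nabla(\abs{u}^{p-2}u)$ explicitly and show that the integrand $(\mu\nabla u, \nabla(\abs{u}^{p-2}u))$ is pointwise a nonnegative real multiple of an element $(\mu(x)\xi,\cJ_p(\xi))$ for some $\xi = \xi(x) \in \C^d$, which then lies in $\Sigma_\theta$ by hypothesis and the fact that $(\mu(x)\lambda\xi,\cJ_p(\lambda\xi)) = \lambda^2 (\mu(x)\xi,\cJ_p(\xi))$ for $\lambda \in \R$, so $\Sigma_\theta$-containment extends from unit vectors in $N_p$ to all of $\C^d$ by conicality.

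First, I would observe that because $\abs{u}$ is bounded from below and above by strictly positive constants, the map $z \mapsto \abs{z}^{p-2}z$ is $C^1$ on the range of $u$, so $\abs{u}^{p-2}u \in C^1(\Lambda) \cap W^{1,2}(\Lambda)$ and the chain rule gives
\begin{equation*}
  \nabla(\abs{u}^{p-2}u) = \abs{u}^{p-2}\nabla u + (p-2)\abs{u}^{p-4} u \, \Re(\bar u \nabla u),
\end{equation*}
where I use $\nabla\abs{u} = \Re(\bar u \nabla u)/\abs{u}$. Next I would carry out a pointwise polar decomposition: write $w \coloneqq u/\abs{u}$, which is a $\C$-valued function of modulus one on $\Lambda$, and set $\alpha \coloneqq \Re(\bar w \nabla u)$, $\beta \coloneqq \Im(\bar w \nabla u)$, both in $\R^d$, so that $\nabla u = w(\alpha + \ii\beta)$ pointwise.

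Plugging these expressions into $(\mu\nabla u, \nabla(\abs{u}^{p-2}u))$, shifting the scalars $w$ and $\bar w$ across the inner product (using $\abs{w}=1$) and using $\C$-linearity of $\mu$, I expect to arrive, after cancellation, at
\begin{equation*}
  \bigl(\mu\nabla u,\nabla(\abs{u}^{p-2}u)\bigr) = \abs{u}^{p-2} \bigl(\mu(\alpha + \ii\beta),\, (p-1)\alpha + \ii\beta\bigr).
\end{equation*}
The algebraic key then is the identity $(p-1)\alpha + \ii\beta = \tfrac{p}{2}\cJ_p(\alpha + \ii\beta)$, which follows directly from $\tfrac{2}{p'} = \tfrac{2(p-1)}{p}$. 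Since $\tfrac{p}{2}$ is real and positive, this yields the pointwise identity
\begin{equation*}
  \bigl(\mu(x)\nabla u(x),\nabla(\abs{u}^{p-2}u)(x)\bigr) = \tfrac{p}{2} \abs{u(x)}^{p-2} \bigl(\mu(x)\xi(x),\cJ_p(\xi(x))\bigr), \quad \xi(x) = \alpha(x) + \ii\beta(x).
\end{equation*}

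The standing hypothesis $N_p(\mu(x)) \subseteq \Sigma_\theta$ for a.e.\ $x$, together with the $\R$-homogeneity of $(\mu(x)\cdot,\cJ_p(\cdot))$ of degree two and the conicality of $\Sigma_\theta$, gives that the integrand lies in $\Sigma_\theta$ pointwise almost everywhere. Integrability is immediate since $\abs{u}$ is bounded and $\nabla u \in L^2(\Lambda)$. The conclusion then follows because $\Sigma_\theta$ is a closed convex cone, so an integral of a $\Sigma_\theta$-valued integrable function lies in $\Sigma_\theta$. The only nontrivial step is the chain-rule computation; the rest is algebraic bookkeeping and the conic argument.
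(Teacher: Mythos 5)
Your proof is correct, and it reaches the same structural endpoint as the paper's proof---namely, that the integrand is pointwise a nonnegative real multiple of $\bigl(\mu(x)\xi,\cJ_p(\xi)\bigr)$ for some $\xi(x) \in \C^d$, which lies in $\Sigma_\theta$ by the standing assumption and the $\R$-homogeneity of degree two of $\xi\mapsto(\mu\xi,\cJ_p(\xi))$. The route, however, is genuinely different: the paper follows Cialdea--Maz'ya and introduces the auxiliary function $v \coloneqq \abs{u}^{(p-2)/2}u$, computes $\nabla\abs{v}$, and works with $\phi\coloneqq\Re\bigl(\tfrac{\bar v}{\abs{v}}\nabla v\bigr)$, $\psi\coloneqq\Im\bigl(\tfrac{\bar v}{\abs{v}}\nabla v\bigr)$ and a rescaling $\varphi = \tfrac{2\phi}{p}$; you instead apply a polar decomposition $u = w\abs{u}$ with $\abs{w}=1$ directly and read off $\alpha = \Re(\bar w\nabla u)$, $\beta = \Im(\bar w\nabla u)$, so that $\nabla u = w(\alpha + \ii\beta)$ and $\nabla(\abs{u}^{p-2}u) = \abs{u}^{p-2}w\bigl[(p-1)\alpha + \ii\beta\bigr]$. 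The identity $(p-1)\alpha + \ii\beta = \tfrac p2\cJ_p(\alpha+\ii\beta)$ then collapses the whole computation into a one-line factorization
\begin{equation*}
  \bigl(\mu\nabla u,\nabla(\abs{u}^{p-2}u)\bigr) = \tfrac p2\,\abs{u}^{p-2}\bigl(\mu(\alpha+\ii\beta),\cJ_p(\alpha+\ii\beta)\bigr),
\end{equation*}
avoiding the lengthy expansion in $\phi,\psi$. Your route is arguably more direct and exposes the $\cJ_p$-structure earlier; the paper's route hews to the Cialdea--Maz'ya computation which the reader may recognize from the $L^p$-dissipativity literature. (Incidentally, your derivation makes visible the harmless positive factor $\tfrac p2$, which the paper silently absorbs in its final displayed identity.) The closing integrability and closed-convex-cone arguments you give are the right ones and match the paper's.
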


\begin{proof}
  Observe that due to the supposed properties of $u$, all the subsequent formal manipulations,
  essentially following Cialdea/Maz'ya~\cite{cialdea/mazya}, are strictly justified. Putting
  $v\coloneqq |u|^\frac {p-2}{2}u$, we calculate
  \begin{align*}
    \int\limits_\Lambda \bigl(  \mu \nabla u,
    \nabla ( \abs{u}^{p-2} {u} )\bigr)   &=
    \int\limits_\Lambda \Bigl(  \mu \nabla \bigl (|v|^\frac {2-p}{p}v \bigr ),  \nabla \bigl (
    |v|^\frac {p-2}{p} v\bigr )\Bigr)  \\
    &= \int\limits_\Lambda \bigl(  \mu \nabla v,  \nabla v \bigr)
    -\Bigl(1-\frac {2}{p}\Bigr )^2 \int\limits_\Lambda \bigl(  \mu \nabla |v|,\nabla |v| \bigr) \\
    & \qquad \qquad +\Bigl (1-\frac {2}{p} \Bigr ) \Biggl( \int\limits_\Lambda \Bigl(  \mu \frac {\overline v}{|v|}
    \nabla v ,  \nabla | v|\Bigr)
    - \int\limits_\Lambda \Bigl(  \mu \nabla |v|,  \frac {\overline v}{ |v|}  \nabla v
    \Bigr)   \Biggr).
  \end{align*}
  Further following~\cite[Corollary~1]{cialdea/mazya}, we put
  $\phi\coloneqq \Re \bigl (\frac {\overline v}{|v|}\nabla v \bigr )$ and
  $\psi\coloneqq \Im \bigl (\frac {\overline v}{|v|}\nabla v \bigr )$. So:
  \begin{equation*}
    \nabla |v| = \nabla \bigl (v \overline v \bigr )^{\frac {1}{2}}= \frac {1}{2} \frac {1}{|v|}
    \bigl(v \nabla \overline v + \overline v \nabla v \bigr) = \Re \Bigl( \frac {\overline v}{|v|}
    \nabla v\Bigr ) = \phi.
  \end{equation*}
  It follows that
  \begin{equation*}
    \int\limits_\Lambda \bigl(  \mu \nabla v,  \nabla  v \bigr)  =
    \int\limits_\Lambda \Bigl(  \mu \frac {\overline v}{|v|}\nabla v,
    \frac {\overline v}{|v|}  \nabla  v \Bigr)
    = \int\limits_\Lambda \bigl(  \mu(\phi +\ii  \psi), \phi +\ii  \psi \bigr)
  \end{equation*}
  and overall
\begin{align*}
  \int\limits_\Lambda \bigl(  \mu \nabla u,
  \nabla ( \abs{u}^{p-2} {u} )\bigr)
  &=\int\limits_\Lambda \bigl(  \mu (\phi+\ii  \psi),
  \phi+\ii  \psi \bigr)
  - \bigl(1 -\frac {2}{p}\bigr)^2 \int\limits_\Lambda (  \mu \phi,  \phi )  \nonumber \\
  & \qquad \qquad + \Bigl (1-\frac {2}{p} \Bigr ) \Bigl( \int\limits_\Lambda \bigl(  \mu (\phi +\ii  \psi)
  ,  \phi \bigr)
  -\int\limits_\Lambda \bigl (  \mu\phi , \phi+\ii \psi \bigr)  \Bigr ) \nonumber \\
  &= \Bigl ( 1 -\bigl ( 1- \frac {2}{p}\bigr )^2 \Bigl ) \int\limits_\Lambda (  \mu \phi,
  \phi )   + \int\limits_\Lambda (  \mu \psi,\psi  )  
  \\
  & \qquad \qquad
  + \ii\, \frac {2}{p'} \int\limits_\Lambda (  \mu \psi,  \phi )  -
  \ii \, \frac {2}{p} \int\limits_\Lambda (  \mu \phi,  \psi ) 
\end{align*}
Staring at the last right-hand side for a minute and setting $\varphi \coloneqq \frac{2\phi}p$, we
find
\begin{equation*}
  \int\limits_\Lambda ( \mu \nabla u,
  \nabla ( \abs{u}^{p-2} {u} ))  = \int_\Lambda \bigl(\mu(\varphi + \ii \psi),\cJ_p(\varphi + \ii \psi)\bigr)
\end{equation*}
and the claim follows.
\end{proof}

We proceed with \textbf{Step 2} to remove the restriction on $u$ in Lemma~\ref{lem:dissipa}. It will cost
us a cut-off, but one that we can deal with later.

\begin{lemma}\label{lem:Dissipa}
  Let $u \in C^1(\Omega) \cap W^{1,2}(\Omega)$.
  Define, for every $K >0$ and $z \in \C$,
  \[ |z|_K \coloneqq 
    \Bigl(\frac1K \vee {\bigr(\abs{z}\wedge K\bigr)}\Bigr)
  \]
  Then
  \begin{equation*}
     \int_{\Omega} \bigl( \mu \nabla u, \nabla (|u|_K^{p-2}u) \bigr)\in \Sigma_\theta.
  \end{equation*}
 \end{lemma}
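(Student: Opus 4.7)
The plan is to split $\Omega$ along the level sets of $|u|$ and combine Lemma~\ref{lem:dissipa} on the region where $|u|$ strictly ranges through $(1/K,K)$ with a direct argument on the remaining regions. Concretely, I would partition
\[
\Omega = \Lambda_K^- \cup \Lambda_K \cup \Lambda_K^+ \cup L_K^- \cup L_K^+,
\]
where $\Lambda_K^- \coloneqq \{|u|<1/K\}$, $\Lambda_K \coloneqq \{1/K<|u|<K\}$, $\Lambda_K^+ \coloneqq \{|u|>K\}$ (all open by continuity of $u$), and $L_K^\pm$ are the level sets $\{|u| = 1/K\}$, $\{|u| = K\}$. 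On $\Lambda_K$ one has $|u|_K = |u|$ with $|u|$ bounded away from $0$ and $\infty$, so Lemma~\ref{lem:dissipa} applies to $\Lambda = \Lambda_K$ and yields $\int_{\Lambda_K}(\mu\nabla u, \nabla(|u|^{p-2}u))\in \Sigma_\theta$.

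On the remaining four subsets, $|u|_K$ is constant in the Sobolev sense: on the open sets $\Lambda_K^\pm$ this is immediate, and on each level set $L_K^\pm$ it follows from Stampacchia's lemma, which gives $\nabla |u| = 0$ almost everywhere on any level set of the Sobolev function $|u|$, hence $\nabla |u|_K = 0$ a.e.~on $L_K^\pm$ as well. Consequently, on each of these four subsets,
\[
\nabla(|u|_K^{p-2}u) = |u|_K^{p-2}\, \nabla u \quad \text{a.e.},
\]
with $|u|_K$ equal to one of the positive constants $1/K$ or $K$. The integral over each of these subsets therefore reduces to a positive scalar multiple of $\int(\mu\nabla u, \nabla u)$.

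Each of these remaining integrals lies in the convex cone $\Sigma_\theta$: the pointwise integrand $(\mu(x)\nabla u(x), \nabla u(x))$ belongs to $N(\mu(x)) \cup \{0\}$, and the standing hypothesis $N_p(\mu(x)) \subseteq \Sigma_\theta$ a.e., combined with the decomposition~\eqref{eq:p-numerical-range-convex-combo} recorded right after Theorem~\ref{thm:numerical-range-p-elliptic-operator}, implies $N(\mu(x))\subseteq \Sigma_\theta$ a.e.~as well. Adding up the five contributions, each of which sits in $\Sigma_\theta$, produces the integral over $\Omega$ and yields the claim, since $\Sigma_\theta$ is a convex cone.

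The main obstacle will be the careful handling of the level sets $L_K^\pm$: although $u$ is $C^1$, nothing prevents $\{|u|=c\}$ from carrying positive Lebesgue measure, so one genuinely needs the Stampacchia-type chain rule to conclude $\nabla |u|_K = 0$ a.e.~there, and then a routine product-rule argument (applied to the Lipschitz composition $z \mapsto (\max(1/K,\min(|z|,K)))^{p-2}z$) to identify $\nabla(|u|_K^{p-2}u)$ with $|u|_K^{p-2}\nabla u$ on those sets. Everything else is bookkeeping of summing pieces in the convex cone $\Sigma_\theta$.
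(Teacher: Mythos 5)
Your argument is correct and follows essentially the same route as the paper: split along the cut-off levels $1/K$ and $K$, apply Lemma~\ref{lem:dissipa} to the middle band, and place the outer contributions in $\Sigma_{\omega(\mu)}\subseteq\Sigma_\theta$ using~\eqref{eq:p-numerical-range-convex-combo}. The only difference is that you handle the level sets $\{|u|=1/K\}$ and $\{|u|=K\}$ explicitly via Stampacchia's lemma and the Lipschitz chain rule, which the paper absorbs into its distributional chain rule formula~\eqref{eq:distributrule} without comment --- a sound bit of extra care rather than a different approach.
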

\begin{proof}
  We have by the usual (distributional) rules of differentiation
  \begin{equation}\label{eq:distributrule}
    \frac {\partial }{\partial x_j} \bigl( |u|_K^{p-2}u \bigr) = \begin{cases} \frac {\partial }{\partial x_j} \bigl( |u|^{p-2}u \bigr) & \text{if }
      \frac {1}{K} < |u| < K, \\
      K^{p-2} \frac{\partial u}{\partial x_j} & \text{if } |u| \ge K, \\
      \frac {1}{K^{p-2}} \frac{\partial u}{\partial x_j} & \text{if } |u| \le \frac {1}{K}.
    \end{cases}
  \end{equation}
  This gives
  \begin{equation}
    \begin{split}
      \int_{\Omega} \bigl( \mu \nabla u, \nabla (|u|_K^{p-2}u) \bigr)& =\int_{\frac {1}{K} <|u| < K} \bigl( \mu \nabla u, \nabla
      (|u|^{p-2}u) \bigr) \\ & \qquad + K ^{p-2} \int_{|u| \ge K} ( \mu \nabla u, \nabla u ) + \frac {1}{K ^{p-2}}
      \int_{|u| \le \frac {1}{K}} ( \mu \nabla u, \nabla u ).
    \end{split}\label{eq:addends}
  \end{equation}
  We put $\Lambda \colon = [\frac {1}{K} <|u| < K]$, which is an open subset of $\Omega$. By the foregoing
  Lemma~\ref{lem:Dissipa}, for the first term in~\eqref{eq:addends} we have
  \[
    \int_{\Lambda} ( \mu \nabla u, \nabla (|u|^{p-2}u) ) \in \Sigma_\theta
  \]
  The second and third term in~\eqref{eq:addends} are contained in
  $\Sigma_{\omega(\mu)} \subseteq \Sigma_\theta$, recall~\eqref{eq:p-numerical-range-convex-combo}. With
  $\Sigma_\theta$ being a convex cone, it follows that
  \[
    \int_{\Omega} ( \mu \nabla u, \nabla (|u|_K^{p-2}u) ) \in \Sigma_\theta.\qedhere
  \]
\end{proof}

\begin{lemma}\label{lem:cutoff}
  Let $u \in W^{1,2}(\Omega)$ and $K>0$. Then $\abs{u}_K^{p-2}u \in W^{1,2}(\Omega)$ and $\ft(u,\abs{u}_K^{p-2}u) \in \Sigma_\theta$.
\end{lemma}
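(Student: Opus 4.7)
The plan is to first establish that $\abs{u}_K^{p-2}u \in W^{1,2}(\Omega)$ via a Sobolev composition argument, and then pass to the limit from Lemma~\ref{lem:Dissipa} using a $C^1$-approximation. Consider the map $\Phi \colon \C \to \C$, $\Phi(z) \coloneqq \abs{z}_K^{p-2}z$. Since $\abs{z}_K \in [1/K,K]$, $\Phi$ is bounded by $K^{p-1}$ and, via the expression analogous to~\eqref{eq:distributrule} for the componentwise partial derivatives of $\Phi$ in the real variables, $\Phi$ is globally Lipschitz on $\C$ with a constant $L = L(K,p)$. Consequently, a standard Stampacchia-type chain rule (applied to real and imaginary parts) yields that $\Phi(u) \in W^{1,2}(\Omega)$ for every $u \in W^{1,2}(\Omega)$, with gradient given pointwise almost everywhere by~\eqref{eq:distributrule}, and the \emph{a priori} bound
\begin{equation*}
  \bigl\lVert \abs{u}_K^{p-2}u\bigr\rVert_{W^{1,2}(\Omega)} \leq C(K,p,|\Omega|)\, \bigl(1+\lVert u\rVert_{W^{1,2}(\Omega)}\bigr).
\end{equation*}

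Next, to show $\ft(u,\abs{u}_K^{p-2}u)\in\Sigma_\theta$, I would invoke Meyers-Serrin to choose a sequence $u_n \in C^\infty(\Omega)\cap W^{1,2}(\Omega)$ converging to $u$ in $W^{1,2}(\Omega)$. Each $u_n$ satisfies the hypothesis of Lemma~\ref{lem:Dissipa}, so
\begin{equation*}
  \ft(u_n,\abs{u_n}_K^{p-2}u_n) = \int_{\Omega}\bigl(\mu\nabla u_n,\nabla(\abs{u_n}_K^{p-2}u_n)\bigr) \in \Sigma_\theta.
\end{equation*}
Since $\Sigma_\theta$ is closed and $\ft$ is continuous on $W^{1,2}(\Omega)\times W^{1,2}(\Omega)$, the result will follow once I verify that $\abs{u_n}_K^{p-2}u_n \to \abs{u}_K^{p-2}u$ in $W^{1,2}(\Omega)$ (at least along a subsequence).

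The main technical point is this convergence in $W^{1,2}(\Omega)$. Passing to a subsequence, $u_n \to u$ and $\nabla u_n \to \nabla u$ both in $L^2(\Omega)$ and pointwise almost everywhere, and $|\nabla u_n|$ is dominated by an $L^2(\Omega)$-function. The Lipschitz property of $\Phi$ together with dominated convergence immediately gives $\Phi(u_n) \to \Phi(u)$ in $L^2(\Omega)$. For the gradients, I would use~\eqref{eq:distributrule} which exhibits $\nabla\Phi(u_n)$ as a bounded function of $u_n$ multiplied by $\nabla u_n$, with the ``bounded function'' factor being continuous on $\C$ away from the two circles $\{|z|=1/K\}$ and $\{|z|=K\}$; since $\nabla u = 0$ almost everywhere on $\{|u|=c\}$ for any constant $c$ (a classical Stampacchia result), the limit of the gradient factors along the subsequence agrees almost everywhere with the factor associated to $u$ itself, and a final application of dominated convergence transfers the $L^2$-convergence to $\nabla\Phi(u_n) \to \nabla\Phi(u)$. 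The hardest part will be precisely this careful handling of the level sets of $|u|$; everything else is then a routine continuity-of-sesquilinear-form argument to conclude $\ft(u,\abs{u}_K^{p-2}u)\in\Sigma_\theta$.
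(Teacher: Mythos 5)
Your argument is essentially correct, but it takes a genuinely different and somewhat harder route than the paper for the limit passage. Both you and the paper begin the same way: establish $\abs{u}_K^{p-2}u \in W^{1,2}(\Omega)$ via the chain rule \eqref{eq:distributrule}/\eqref{eq:diff-duality}, approximate $u$ by Meyers--Serrin with $u_n \in C^1(\Omega) \cap W^{1,2}(\Omega)$, and invoke Lemma~\ref{lem:Dissipa}. The difference is in passing to the limit. You push for \emph{strong} $W^{1,2}$-convergence of $\abs{u_n}_K^{p-2}u_n$ to $\abs{u}_K^{p-2}u$, which forces you into a delicate discussion of the discontinuity of the chain-rule map on the level sets $\{\abs{u} = 1/K\}$, $\{\abs{u} = K\}$, resolved via Stampacchia. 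The paper avoids this entirely: from \eqref{eq:diff-duality} the sequence $(\abs{u_n}_K^{p-2}u_n)$ is merely seen to be \emph{bounded} in $W^{1,2}(\Omega)$; one extracts a weakly convergent subsequence, identifies its limit as $\abs{u}_K^{p-2}u$ by pointwise a.e.\ convergence of $(u_n)$, and then passes to the limit in the sesquilinear form $\ft$ using strong convergence in the first slot and weak convergence in the second, for which no behaviour on level sets is needed. The weak-compactness route is shorter and sidesteps the hardest technical step of your plan; your route is more constructive and gives the stronger conclusion of strong convergence, which is not needed here.

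One small imprecision to flag: for complex-valued $u$, the classical Stampacchia result gives $\nabla\abs{u} = 0$ a.e.\ on $\{\abs{u} = c\}$ (applied to the real-valued Sobolev function $\abs{u}$), not $\nabla u = 0$ (consider $u$ of constant modulus but varying phase). Your argument still goes through because, per \eqref{eq:diff-duality}, the problematic second term is proportional to $\Re\bigl(u\,\overline{\nabla u}\bigr) = \abs{u}\,\nabla\abs{u}$, which does vanish a.e.\ on those level sets; but the justification should be phrased in terms of $\nabla\abs{u}$, not $\nabla u$.
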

\begin{proof} Let $K >0$ be arbitrary but fixed. We note that the chain rule~\eqref{eq:distributrule}
  remains valid for $u \in W^{1,2}(\Omega)$. Thus, using that
  \begin{equation*}
    \frac {\partial }{\partial x_j} \abs{v} = \frac {1}{2} \frac {\overline v}{|v|} \frac {\partial v }{\partial x_j} + \frac {1}{2} \frac { v}{|v|} \frac {\partial \overline v }{\partial x_j} = \frac{\Re\bigl(v\,\overline{\frac{\partial v}{\partial x_j}}\bigr)}{\abs{v}},
  \end{equation*}
  for $v \in W^{1,2}(\Omega)$ bounded away from zero~(\cite[Prop.~4.4]{Ouhab}), we find
  that
  \begin{equation}\label{eq:diff-duality}
    \frac {\partial }{\partial x_j} \bigl(|u|_K^{p-2}u \bigr) = |u|_K^{p-2} \frac {\partial u }{\partial x_j}
    + \chi_{[\frac1K < \abs{u} < K]}(p-2) u |u|^{p-4} \Re\Bigl(u\,\overline{\frac{\partial u}{\partial x_j}}\Bigr).
  \end{equation}
  It is now easily seen that $\abs{u}_K^{p-2}u \in W^{1,2}(\Omega)$ for every $u \in W^{1,2}(\Omega)$. This is the
  first assertion. In order to remove the smoothness assumption from Lemma~\ref{lem:Dissipa}, let
  $u \in W^{1,2}(\Omega)$ be arbitrary. Then, by the Meyers-Serrin theorem, there is a sequence $(u_n)$ in
  $C^1(\Omega) \cap W^{1,2}(\Omega)$ such that $u_n \to u$ in $W^{1,2}(\Omega)$.
  From~\eqref{eq:diff-duality}, we see that the sequence $(\abs{u_n}_K^{p-2}u_n)$ is uniformly bounded in
  $W^{1,2}(\Omega)$ and thus admits a weakly convergent subsequence that converges to some
  $v \in W^{1,2}(\Omega)$. Passing to another subsequence, we also know that $(u_n)$ converges pointwise almost
  everywhere to $u$. It follows that $v = \abs{u}_K^{p-2} u$ and so, from Lemma~\ref{lem:Dissipa},
  \begin{equation*}
    \ft(u,\abs{u}_K^{p-2}u)  = \lim_{n\to\infty}\ft(u_n,\abs{u_n}_K^{p-2}u_n) \in \Sigma_\theta.\qedhere
  \end{equation*}
\end{proof}

We collect the overall conclusion in \textbf{Step 3}:

\begin{proof}[Proof of Theorem~\ref{thm:numerical-range-p-elliptic-operator}]
  Let $u \in \dom(A_p)$ with $\norm{u}_{L^p(\Omega)} = 1$. We show that $u^*(A_pu) \in \Sigma_\theta$. In fact, it will be
  enough to show the latter for $u$ from a core of $\dom(A_p)$, because if $v \in \dom(A_p)$ with
  $\norm{v}_{L^p(\Omega)}=1$ is approximated by a sequence $(u_n)$ in a core of $\dom(A_p)$, then
  $A_p u_n \to A_pv$ and $u_n^* \coloneqq \abs{u_n}^{p-2}u_n/\norm{u_n}_{L^p(\Omega)}^{p-1}$ converges weakly to
  $\abs{v}^{p-2}v= v^*$ in $L^p(\Omega)^*$. Thus, $u_n^*(A_p u_n) \to v^*(A_p v)$ and if
  $u_n^*(A_p u_n) \in \Sigma_\theta$, then so is $v^*(A_p v)$. Since the semigroups $T_2$ and $T_p$ are consistent by
  construction, $\dom(A_p) \cap \dom(A_2)$ is a core for $\dom(A_p)$, see~\cite[Lemma~3.1]{Chill}, and that
  one will work for us. So we show the assertion for $u \in \dom(A_p) \cap \dom(A_2)$ with
  $\norm{u}_{L^p(\Omega)} = 1$. Indeed, with $\abs{u}_K^{p-2}u \in L^p(\Omega) \cap L^2(\Omega)$ and Dominated Convergence we
  find
  \begin{equation}\label{eq:Lebesgdomin}
    u^*(A_p u) = \int_\Omega (A_p u) \overline u |u |^{p-2} = \lim _{K \to \infty} \int_\Omega (A_p u) \overline u |u |_K^{p-2} = \lim _{K \to \infty} \int_\Omega (A_2 u) \overline u |u |_K^{p-2}.
  \end{equation}
  In order to relate the last integral in~\eqref{eq:Lebesgdomin} with $\ft_V(u,\abs{u}_K^{p-2}u)$, we need
  to show that $\abs{u}_K^{p-2}u \in V$. We do already have $u \in \dom(A_2) \subseteq V$.
  \begin{itemize}
    \item Let $V$ be defined as the closure of $\underline{W}^{1,2}_D(\Omega)$ as in~\eqref{eq:closureII}. If
      $D = \emptyset$, then $V = W^{1,2}(\Omega)$, and by Lemma~\ref{lem:cutoff} $\abs{u}_K^{p-2}u \in W^{1,2}(\Omega)$, so we are
      done. Otherwise, there is a sequence $(u_n) \subseteq \underline{W}^{1,2}_D(\Omega)$ such that $u_n \to u$ in
      $W^{1,2}(\Omega)$. By definition, $\dist(\supp (u_n),D) > 0$ for each $n$, so we recover the same for each
      $\abs{u_n}_K^{p-2}u_n$. In particular, $(\abs{u_n}_K^{p-2}u_n) \subseteq
      \underline{W}^{1,2}_D(\Omega) \subseteq V$. As in the proof of Lemma~\ref{lem:cutoff}, there is a subsequence of
      $(\abs{u_n}_K^{p-2}u_n)$ that converges weakly to $\abs{u}_K^{p-2}u$ in $W^{1,2}(\Omega)$, and that
      implies that the latter is an element of $V$.
    \item If $V$ carries good Neumann boundary conditions, that is, it is defined via~\eqref{eq:subspace1},
      then there is a sequence $(u_n) \subseteq C_D^\infty(\Omega)$ such that $u_n \to u$ in the
      $W^{1,2}(\Omega)$ norm. Again, there is a subsequence of
      $(\abs{u_n}_K^{p-2}u_n)$ that converges weakly to $\abs{u}_K^{p-2}u$ in $W^{1,2}(\Omega)$. It is thus
      enough to show that $(\abs{u_n}_K^{p-2}u_n) \subseteq V = W^{1,2}_D(\Omega)$. For this purpose, let
      $n$ be fixed. As before, it is clear that since $u_n \in C_D^\infty(\Omega)$, we still have
      $\dist(\supp (\abs{u_n}_K^{p-2}u_n), D) > 0$. From the definition of $C_D^\infty(\Omega)$, there is a
      $v \in C_c^\infty(\R^d)$ such that $u_n = v\rvert_\Omega$. Let $(\xi_k)$ be a usual mollifier family on
      $\R^d$. Let $v_k \coloneqq \abs{v}_K^{p-2}v * \xi_k \in C_c^\infty(\R^d)$. Then, for $k$ large enough, we
      retain $\dist(\supp(v_k),D) > 0$ and $v_k\rvert_\Omega \to \abs{u_n}_Ku_n^{p-2}$ in
      $W^{1,2}(\Omega)$ as $k\to \infty$. But then the latter is an element of $W^{1,2}_D(\Omega) = V$ as desired.
    \end{itemize}
  Now, with the knowledge that $\abs{u}_K^{p-2}u \in V$, we can continue in~\eqref{eq:Lebesgdomin}, relating
  the last integral with $\ft_V$, to obtain from Lemma~\ref{lem:Dissipa}
  \begin{equation*}
    u^*(A_p u) = \lim_{K \to \infty} \ft_V(u,\abs{u}_K^{p-2}u) \in \Sigma_{\theta}.\qedhere
  \end{equation*}
\end{proof}

\subsection{The operator in negative Sobolev scales}\label{sec:oper-negat-sobol}

In the previous results we have compiled many results of sector inclusions for the numerical range of
second-order elliptic differential operators with real or complex coefficient functions on the function
spaces $L^p(\Omega)$, for $1 < p < \infty$. Yet, nowadays it is understood that for the treatment of many real
world problems, spaces from the Lebesgue scale are not always sufficient for a satisfactory analytical
problem setup, but rather larger function spaces should be considered. This concerns in particular
nonlinear, say, quasilinear problems via permanence principles for the dependence of the differential
operator on the solutions, but also structural features of the problem setting. For example,
inhomogeneous Neumann boundary data cannot be incorporated in an $L^p(\Omega)$ problem formulation directly,
whereas it can be represented easily in a weak formulation in dual spaces of Sobolev spaces such as
$W^{-1,2}_D(\Omega)$, the antidual of $W^{1,2}_D(\Omega)$. We define the latter to be the $V$ with good Neumann
boundary conditions, that is, given by the closure of $C_D^\infty(\Omega)$ as in~\eqref{eq:subspace1}, and do not
consider $V$ defined via~\eqref{eq:closureII} from now on. Yet also $W^{-1,2}_D(\Omega)$, while a Hilbert space
and thus quite convenient, is not always adequate to provide a suitable functional-analytical framework
to analyze the problem in. For example, the three-dimensional thermistor problem was shown to be
wellposed in the ambient (spatial) function space $W^{-1,q}_D(\Omega)$ with $q>d=3$
(see~\cite{thermI,thermII}), which is smaller than $W^{-1,2}_D(\Omega)$ and which we define as follows:

\begin{definition}\label{def:negative-sobolev}
  Let $q \in (1,\infty)$. Define $W^{1,q}_D(\Omega)$ to be the completion of $C_D^\infty(\Omega)$ as in~\eqref{eq:subspace1} with
  respect to the $W^{1,q}(\Omega)$ norm. Put further $W^{-1,q}_D(\Omega) \coloneqq W^{1,q'}_D(\Omega)^*$, the
  \emph{anti}dual of $W^{1,q'}_D(\Omega)$.
\end{definition}

So, we are interested in obtaining also similar results as for the $L^p(\Omega)$ scale also for the operators
in $W^{-1,q}_D(\Omega)$ associated with $\ft_V$. In order to make use of the good results for $L^p(\Omega)$, we
propose to use square root operators to transfer the $L^p(\Omega)$ results to the $W^{-1,q}_D(\Omega)$
scale. Let us make this precise with a few definitions and notations. As before, we denote by $A$ the operator
on $L^2(\Omega)$ induced by the form $\ft_V$ with $V = W^{1,2}_D(\Omega)$, corresponding
to~\eqref{eq:subspace1}.

For this subsection, we suppose that $\Omega$ is a bounded domain in $\R^d$ which is Lipschitz around
$\partial\Omega\setminus D$ as in Definition~\ref{def:lipschitz-around}. Moreover, $D$ is \emph{Ahlfors-David-regular}, that
is, there is a constant $C>0$ such that, with the $(d-1)$-dimensional Hausdorff measure $\cHaus_{d-1}$,
\begin{equation*}
  Cr^{d-1} \leq \cHaus_{d-1}(D \cap B_r(x)) \leq C^{-1}r^{d-1} \qquad (x\in D,~r \leq 1).
\end{equation*}
This is not the most general geometric setup that we could use
(cf.~\cite{Bechtel-SquareRoot2,Bechtel_Adv}) but we consider it a fair trade-off between generality vs.\
technical effort at this point. As mentioned in Remark~\ref{rem:SoboEmbedAssu}, the Lipschitz property
of $\partial\Omega\setminus D$ provides us with a Sobolev extension operator. In particular, we have the embedding property
$W^{1,2}_D(\Omega) \hookrightarrow L^{2^*}(\Omega)$ as in~\eqref{eq:formdomain-embed} and the results laid out in
Sections~\ref{ss-realcoeff} and~\ref{ss-complexcoeff} apply. Indeed, as explained there, for
$p \in \cI(A)$ with, as the reader recalls,
\begin{equation*}
  \cI(A) \coloneqq \Bigl\{ p \in (1,\infty) \colon \sup_{t>0}\,\lVert T(t)\rVert_{L^p(\Omega)\to L^p(\Omega)} < \infty\Bigr\},\tag{\ref{eq:bounded-extrapolation-set}}
\end{equation*}
we obtain well defined operators $A_p$ on
$L^p(\Omega)$ that are consistent with $A = A_2$, whose negatives generate analytic semigroups, and whose
angle of sectoriality $\phi(A_p)$ and associated ones coincide with the corresponding angles of $A$, so for
$p=2$. For a general complex coefficient function $\mu$, the set $\cI(A)$ contains at least all
$p$ satisfying $\abs{\frac12-\frac1p} \leq \frac1d$, 
in particular, for $d=2$, we
have $\cI(A) = (1,\infty)$, and equally so for a real coefficient function (in any dimension).

As announced before, we would like to work with the square root of operators, in particular the ones of
$A_p$ or rather---for injectivity reasons---$A_p+1$. We thus note that with sectoriality of
$A_p$, the operators
$A_p+1$ are \emph{positive} operators in the sense of Triebel~\cite[Ch.~1.15.1]{triebel}:
\begin{definition}\label{def:positive-operator}
  We say that a densely defined closed linear operator $B$ on a Banach space
  $X$ is \emph{positive} if $(-\infty,0] \subset \rho(B)$ and there is a constant $C \geq 0$ such that
  \begin{equation}
    \label{eq:positive-operator}
    \norm[\big]{(B+t)^{-1}}_{X\to X} \leq \frac{C}{1+t} \qquad (t \in [0,\infty)).
  \end{equation}
\end{definition}
Compare also~\cite[Sect.~3.8]{Batty}. Note that a positive operator in the foregoing sense is sectorial
as in Definition~\ref{d-Winkelsec}; the main difference is found in the requirement that $0 \in
\rho(B)$ is invertible and the thus-adapted growth condition~\eqref{eq:positive-operator}. A useful feature
of positive operators is that one can define their fractional powers
$B^\beta$ by direct means---the Balakrishnan formula---without passing through a full functional calculus.

The following abstract result, specialized to the present use case, shows
that precisely those fractional powers  are a convenient means to transfer operator properties.

\begin{proposition}[{\cite[Lem.~11.4]{auscher}}]\label{prop:transfer-by-square-root}
  Let $X,Y$ be Banach spaces such that $X \hookrightarrow Y$ densely. Suppose that $B$ is a positive operator on
  $X$ and that there is a $\beta \in (0,1)$ such that $B^{-\beta}$ admits an extension to a topological isomorphism
  $Y \to X$. Then we have the following:
  \begin{enumerate}[(i)]
  \item There is a unique extension $\overline B$ of $B$ to $Y$ which is also a positive operator on $Y$.
  \item If $B$ admits a bounded $\cH^\infty$-calculus on $X$, then $\overline B$ admits a bounded
    $\cH^\infty$-calculus on $Y$ with the same $\cH^\infty$-angle.
  \end{enumerate}
\end{proposition}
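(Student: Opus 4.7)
Let $J \colon Y \to X$ denote the topological isomorphism extending $B^{-\beta}$. The guiding idea is to \emph{conjugate} $B$ by $J$ to produce a densely defined operator on $Y$. I would set
\begin{equation*}
  \dom(\overline B) \coloneqq \bigl\{y \in Y \colon Jy \in \dom(B)\bigr\}, \qquad \overline B y \coloneqq J^{-1}\bigl(B(Jy)\bigr),
\end{equation*}
so that formally $\overline B = J^{-1} B J$. Because $J$ is an isomorphism, $\dom(\overline B)$ is dense in $Y$ and $(\overline B + t)$ is invertible on $Y$ with the explicit formula
\begin{equation*}
  (\overline B + t)^{-1} = J^{-1} (B+t)^{-1} J \qquad (t \geq 0).
\end{equation*}
The growth estimate~\eqref{eq:positive-operator} for $B$ on $X$ then transfers immediately to $\overline B$ on $Y$, with the constant multiplied by $\norm{J}_{Y\to X}\norm{J^{-1}}_{X\to Y}$, so $\overline B$ is positive on $Y$.

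The next step is to verify that $\overline B$ actually extends $B$. For $x \in \dom(B) \subseteq X$, the restriction $J\rvert_X = B^{-\beta}$ gives $Jx = B^{-\beta}x$, and the fractional-power semigroup law yields $B(Jx) = B^{1-\beta}x \in X$. Symmetrically, $J^{-1}\rvert_{\dom(B^\beta)}$ coincides with $B^\beta$ viewed as a map into $Y$ through the embedding $X \hookrightarrow Y$, since $J(B^\beta z) = B^{-\beta} B^\beta z = z$ for $z \in \dom(B^\beta)$. Combining, $\overline B x = J^{-1}(B^{1-\beta}x) = B^\beta B^{1-\beta} x = Bx$. Uniqueness of the positive extension is then a short density argument: any other positive extension $\overline B'$ satisfies $(\overline B' + t)^{-1}\rvert_X = (B+t)^{-1}$ by consistency, and as both $(\overline B' + t)^{-1}$ and $(\overline B + t)^{-1}$ are continuous on $Y$ and agree on the dense subspace $X$, they coincide on all of $Y$, whence $\overline B' = \overline B$.

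For part~(ii), the $\cH^\infty$-calculus transfers by the same conjugation. Using the Dunford integral for a contour $\Gamma$ in $\C\setminus\Sigma_\theta$ with $\theta > \psi(B)$, the resolvent formula above gives
\begin{equation*}
  f(\overline B) = \frac{1}{2\pi \ii}\int_\Gamma f(\lambda)(\lambda - \overline B)^{-1}\dd \lambda = J^{-1}\,f(B)\,J \qquad \bigl(f \in \cH^\infty_0(\Sigma_\theta^\circ)\bigr),
\end{equation*}
so that $\norm{f(\overline B)}_{Y\to Y} \leq \norm{J^{-1}}_{X\to Y}\norm{J}_{Y\to X}\norm{f(B)}_{X\to X}$, yielding a bounded $\cH^\infty(\Sigma_\theta)$-calculus for $\overline B$ and the inequality $\psi(\overline B) \leq \psi(B)$. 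The reverse inequality follows from the symmetric conjugation $B = J\,\overline B\,J^{-1}$ by exactly the same estimate, so the $\cH^\infty$-angles coincide. The main technical obstacle I foresee sits in the previous paragraph: making the fractional-power manipulations rigorous, i.e., confirming that $B^{-\beta}\colon X \to \dom(B^\beta)$ is a bijection with inverse $B^\beta$ and that $B^\beta B^{1-\beta} = B$ on $\dom(B)$. These are standard consequences of the Balakrishnan calculus for positive operators, but they are the pivot on which the clean identity $\overline B\rvert_{\dom(B)} = B$ relies.
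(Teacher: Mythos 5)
The paper does not give its own proof of Proposition~\ref{prop:transfer-by-square-root}---it is quoted as~\cite[Lem.~11.4]{auscher}---so there is nothing in the text to compare against line by line. That said, your conjugation argument $\overline B = J^{-1}BJ$ is exactly the intended route: the paper itself says, immediately after the proposition, that ``the operators $\overline B$ and $B$ are similar via the extension of $B^{-\beta}$.'' Your verification that $\overline B$ extends $B$ (via $Jx = B^{-\beta}x$, $J^{-1}|_{\dom(B^\beta)} = B^\beta$, and $B^\beta B^{1-\beta} = B$ on $\dom(B)$), the uniqueness by resolvent consistency and density, and the transfer $f(\overline B) = J^{-1}f(B)J$ together with the symmetric estimate giving equality of $\cH^\infty$-angles are all sound. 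One cosmetic slip: the Dunford contour $\Gamma$ is not in $\C\setminus\Sigma_\theta$; it is $\partial\Sigma_{\theta'}$ for some $\phi(B)<\theta'<\theta$, i.e.\ it sits \emph{inside} the sector on which $f$ is holomorphic while staying outside a sector containing $\sigma(B)$. This has no effect on the identity $f(\overline B) = J^{-1}f(B)J$ or on the resulting norm bound, so the proof stands.
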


In fact, in the setting of Proposition~\ref{prop:transfer-by-square-root}, the operators $\overline B$
and $B$ are similar via the extension of $B^{-\beta}$. Note that one can frame
Proposition~\ref{prop:transfer-by-square-root} in the context of \emph{extrapolation} scales,
cf.~\cite[Ch.~V.I]{Amann95}, where $Y$ is defined as the completion of $X$ with respect to the graph
norm of $B^{\beta}$. Then the isomorphism property required in
Proposition~\ref{prop:transfer-by-square-root} is true by design, yet there is only an implicit
understanding of the space $Y$ in general. By contrast, we fix a particular space $Y$ from the
beginning---here it will, of course, be a negative Sobolev space---and rely on sophisticated results that
provide the required isomorphism property for the inverse fractional power, which will be a square root.

Indeed, we know that the operators $A_p+1$ have the desirable properties that can be transferred via
Proposition~\ref{prop:transfer-by-square-root}, so, specializing to $\beta = \sfrac12$, all that remains---yet that
is far from obvious---is the isomorphism property for the square root $(A_p+1)^{-1/2}$. Fortunately, we are
standing on the shoulders of giants, and from the main results in~\cite{Bechtel-SquareRoot2,egert} (see also~\cite{bechtelBuch}) and
taking adjoints as in the proof of~\cite[Thm.~11.5]{auscher}, we obtain:

\begin{proposition}\label{p-squareroot}
  There exists $\varepsilon >0$ such that if $p \in \cI(A) \cap (1,2+\varepsilon)$ and $q = p'$, then
  $(A_q + 1)^{-\frac12} \colon L^q(\Omega) \to L^q(\Omega)$ extends to a topological isomorphism
  \begin{equation}\label{eq:adjoint-square-root-kato}
    \widetilde{(A_q+1)^{-\frac12}} \colon W^{-1,q}_D(\Omega) \to L^q(\Omega). 
  \end{equation}
\end{proposition}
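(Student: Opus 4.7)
The plan is to invoke the Kato square root isomorphism from~\cite{Bechtel-SquareRoot2, egert} applied to the \emph{adjoint} operator $A^*$ (induced on $L^2(\Omega)$ by the sesquilinear form $(u,v) \mapsto \overline{\ft_V(v,u)}$, whose coefficient is the pointwise conjugate transpose $\mu^*$), and then to transfer the resulting statement to $A_q$ by taking Banach-space adjoints in the $L^p$-$L^q$ duality. The coefficient $\mu^*$ satisfies all of our standing assumptions verbatim, so every preceding construction is equally available for $A^*$; in particular $\cI(A^*) = \cI(A)$, since the $L^2$-adjoint semigroup $T(t)^*$ extrapolates to the dual $L^p$-scale.

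First, the Kato square root results in the cited papers furnish an $\varepsilon > 0$ such that for every $p \in \cI(A) \cap (1,2+\varepsilon)$, the map
\begin{equation*}
  (A^*_p+1)^{-1/2} \colon L^p(\Omega) \to W^{1,p}_D(\Omega)
\end{equation*}
is a topological isomorphism (since $\dom((A^*_p+1)^{1/2}) = W^{1,p}_D(\Omega)$ with equivalent norms). Second, by Definition~\ref{def:negative-sobolev} one has $W^{-1,q}_D(\Omega) = (W^{1,p}_D(\Omega))^*$, and $(L^p(\Omega))^* = L^q(\Omega)$, so taking the Banach-space adjoint of the above isomorphism yields a bounded topological isomorphism
\begin{equation*}
  \bigl[(A^*_p+1)^{-1/2}\bigr]^* \colon W^{-1,q}_D(\Omega) \to L^q(\Omega).
\end{equation*}
Third, one identifies this adjoint with the desired extension of $(A_q+1)^{-1/2}$. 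Using that $A_q$ and $A^*_p$ are mutually dual in the $L^p$-$L^q$ pairing (both arising as extrapolations from the mutually $L^2$-dual generators), one has $[(A^*_p+1)^{-1}]^*|_{L^q(\Omega)} = (A_q+1)^{-1}$; the corresponding relation for the half-powers then follows via the Balakrishnan representation formula for the square root of a positive operator (Definition~\ref{def:positive-operator}), which is manifestly compatible with taking adjoints. Consequently $[(A^*_p+1)^{-1/2}]^*|_{L^q(\Omega)} = (A_q+1)^{-1/2}$, so the isomorphism from Step~2 is precisely the unique bounded extension of $(A_q+1)^{-1/2}\colon L^q(\Omega)\to L^q(\Omega)$ to an isomorphism $W^{-1,q}_D(\Omega) \to L^q(\Omega)$, as claimed.

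The main obstacle is this third identification step. On a Hilbert space, the identity $(B^{1/2})^* = (B^*)^{1/2}$ is immediate from the spectral theorem, but between the anisotropic pairs $(L^p, W^{1,p}_D)$ and $(L^q, W^{-1,q}_D)$ some care is needed to reconcile the Banach-space adjoint of the bounded operator $(A^*_p+1)^{-1/2}$ with the half-power of the unbounded dual operator $A_q+1$. A clean route goes via the Balakrishnan formula combined with duality of resolvents, which allows adjoints and the defining integral to be swapped once absolute convergence holds in the appropriate operator norm; this convergence is in turn ensured by the positivity estimates for both $A^*_p+1$ and $A_q+1$ on the respective $L^p$- and $L^q$-spaces.
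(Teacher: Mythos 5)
Your proposal follows the same route the paper indicates (the paper's ``proof'' is a one-line citation: apply the Kato square root theorems of \cite{Bechtel-SquareRoot2,egert} and take adjoints as in the proof of \cite[Thm.~11.5]{auscher}), and the three steps you lay out---Kato for the adjoint form, Banach-space adjoints against the antidual pairing, and identification of the adjoint with $(A_q+1)^{-1/2}$ via the Balakrishnan representation---are exactly the content implicit in that citation. The treatment of the identification step via the Balakrishnan integral together with duality of resolvents is the correct way to see that Banach-space adjoints commute with half-powers here.

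One imprecision worth flagging: the claim ``$\cI(A^*) = \cI(A)$, since the $L^2$-adjoint semigroup $T(t)^*$ extrapolates to the dual $L^p$-scale'' does not follow from the stated reason. By duality of operator norms one has $\norm{T^*(t)}_{L^p\to L^p} = \norm{T(t)}_{L^{p'}\to L^{p'}}$, which gives $\cI(A^*) = \{p'\colon p \in \cI(A)\}$, the set of H\"older conjugates of $\cI(A)$, not $\cI(A)$ itself. Since $\cI(A)$ is an interval containing $2$ but need not be symmetric under conjugation, these sets can differ. What the argument really requires is that the Kato square root result holds for $A^*$ at exponent $p$, i.e.\ $p \in \cI(A^*)\cap(1,2+\varepsilon^*)$, which is equivalent to $q=p'\in\cI(A)$ together with $p<2+\varepsilon^*$. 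In the concrete geometric regimes that Section 3.4 then invokes (real coefficients, or $d=2$, or $q\in[2,2^*]$ with $d\geq 3$), both $p$ and $q$ lie comfortably in $\cI(A)\cap\cI(A^*)$, so the gap does not bite there; but the bare assertion $\cI(A^*)=\cI(A)$ is not correct and should be replaced by the conjugate relation, with the applicability range re-examined accordingly.
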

The lower bound on $q$ in Proposition~\ref{p-squareroot} comes from $\varepsilon$, which depends on the coefficient
function $\mu$ and the geometry of $\Omega$ and $D$. Recall from the above explanations regarding
$\cI(A)$ that if the coefficient function $\mu$ is real or if $d=2$,
then~\eqref{eq:adjoint-square-root-kato} is certainly true for all $q \in [2,\infty)$, whereas for a complex
coefficient function with $d \geq 3$, it is at least always true for all $q \in [2,2^*]$, with the Sobolev
conjugate $2^*$ of $2$. Of course, there are more precise statements on the range of admissible $q$
for~\eqref{eq:adjoint-square-root-kato}, linked to optimal elliptic regularity results, but we do not go
into details here and refer to~\cite{Bechtel-SquareRoot2,egert} and the references there. We can afford
to do so because the most crucial case for us $q>d$ is already admissible for $d=3$ without further ado
due to $2^* = 6$.

Now, in order to leverage Proposition~\ref{p-squareroot}, let
$\cA \colon W^{1,2}_D(\Omega) \to W^{-1,2}_D(\Omega)$ be the operator associated to $\ft_V$ on
$V = W^{1,2}_D(\Omega)$, that is, such that
\begin{equation*}
  \ft_V(u,v) = \langle \cA u,v\rangle \qquad (u,v \in W^{1,2}_D(\Omega))
\end{equation*}
Now let $q$ be as in Proposition~\ref{p-squareroot}. If $q>2$, consider the part $\cA_q$ of
$\cA$ in $W^{-1,q}_D(\Omega)$, whereas  for $q<2$, we define $\cA_q$ to be the \emph{closure} of $\cA$ in
$W^{-1,q}_D(\Omega)$. Then $\cA_q+1$ is the natural candidate for the extension of $A_q+1$ to $W^{-1,q}_D(\Omega)$
in Proposition~\ref{prop:transfer-by-square-root} via Proposition~\ref{p-squareroot}, and we indeed
obtain:
\begin{corollary}\label{cor:properties-for-w-1-q}
  Let $q$ be as in Proposition~\ref{p-squareroot}. Then $\cA_q+1$ is a positive operator on
  $W^{-1,q}_D(\Omega)$ and admits a bounded $\cH^\infty$-calculus on $W^{-1,q}_D(\Omega)$ with the $\cH^\infty$-angle $\omega$. In
  particular, $\cA_q$ satisfies maximal parabolic regularity on $W^{-1,q}_D(\Omega)$.
\end{corollary}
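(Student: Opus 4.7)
The plan is to apply Proposition~\ref{prop:transfer-by-square-root} with $X = L^q(\Omega)$, $Y = W^{-1,q}_D(\Omega)$, $B = A_q+1$, and $\beta = \sfrac12$, and then to identify the resulting abstract extension $\overline B$ with $\cA_q + 1$. For this, I first need to verify the hypotheses. The dense embedding $L^q(\Omega) \hookrightarrow W^{-1,q}_D(\Omega)$ is a standard fact for bounded $\Omega$ and follows from $C_D^\infty(\Omega)$ being dense in both $W^{1,q'}_D(\Omega)$ and its antidual's predual. Positivity of $A_q+1$ on $L^q(\Omega)$ in the sense of Definition~\ref{def:positive-operator} is a consequence of sectoriality of $A_q$ from Theorem~\ref{thm:chill} (real case) respectively from Lemma~\ref{lem:p-elliptic-extrapolate} together with Theorem~\ref{thm:numrange-complex-estimate} (complex case), combined with the shift by $1$ which moves $0$ into the resolvent set so that the sectorial resolvent bound~\eqref{eq:resdecay} upgrades to the positivity bound~\eqref{eq:positive-operator}. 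The isomorphism property of $(A_q+1)^{-1/2}$ between $W^{-1,q}_D(\Omega)$ and $L^q(\Omega)$ is exactly the content of Proposition~\ref{p-squareroot}.

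With these hypotheses in place, Proposition~\ref{prop:transfer-by-square-root}(i) yields a unique positive extension $\overline B$ of $A_q+1$ on $W^{-1,q}_D(\Omega)$, and part~(ii) transfers the bounded $\cH^\infty$-calculus. For the $\cH^\infty$-calculus on $L^q(\Omega)$, in the real coefficients case I invoke Theorem~\ref{thm:functkalc} combined with the improvements described after Remark~\ref{rem:SoboEmbedAssu} (via Gaussian estimates) to get $\cH^\infty$-angle $\omega$; in the complex case I use Egert's result cited in Section~\ref{ss-complexcoeff} (\cite{egert}) which directly yields the $\cH^\infty$-angle $\omega$ on $L^q(\Omega)$ for $q \in \cI(A)$, given that $\Omega$ is Lipschitz around $\partial\Omega\setminus D$ as assumed here. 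Applying the transfer result with the same angle gives the bounded $\cH^\infty$-calculus with angle $\omega$ on $W^{-1,q}_D(\Omega)$.

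The main obstacle is the bookkeeping step of identifying $\overline B$ with $\cA_q + 1$. For $q > 2$, the part $\cA_q$ of $\cA$ in $W^{-1,q}_D(\Omega)$ has domain consisting of those $u \in W^{1,2}_D(\Omega)$ with $\cA u \in W^{-1,q}_D(\Omega)$, and on $\dom(A_q) \subseteq L^q(\Omega) \hookrightarrow W^{-1,q}_D(\Omega)$ it manifestly agrees with $A_q$; since both $\cA_q+1$ and $\overline B$ are positive operators that extend $A_q+1$ and share the resolvent point $0$ (which lies in the resolvent set of any positive operator), a standard uniqueness-of-extension argument on $\rho(B)$ using $X \hookrightarrow Y$ densely forces $\overline B = \cA_q + 1$. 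For $q < 2$, $\cA_q$ is defined by closure, and the same uniqueness argument together with the fact that $\dom(A_2) \cap \dom(\cA_q)$ is a core for both sides (using consistency of the resolvents via the Laplace transform of the consistent semigroups, as already used in the proof of Theorem~\ref{thm:functkalc}) yields the identification.

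Finally, maximal parabolic regularity for $\cA_q$ on $W^{-1,q}_D(\Omega)$ follows because $\cH^\infty$-angle $\omega < \sfrac\pi2$ implies bounded imaginary powers of angle $< \sfrac\pi2$, and $W^{-1,q}_D(\Omega)$ is a UMD space as a closed subspace (antidual of a closed subspace) of the reflexive space $W^{-1,q}(\R^d)$; the Dore--Venni theorem~\cite{doreVenni} then applies just as in Theorem~\ref{thm:functkalc}. Maximal parabolic regularity is invariant under translation in the spectral parameter, so it transfers from $\cA_q+1$ to $\cA_q$.
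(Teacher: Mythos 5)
Your proposal is correct and follows essentially the same approach the paper intends: applying Proposition~\ref{prop:transfer-by-square-root} with $\beta = \sfrac12$, using the square root isomorphism from Proposition~\ref{p-squareroot}, the dense embedding $L^q(\Omega) \hookrightarrow W^{-1,q}_D(\Omega)$, and the bounded $\cH^\infty$-calculus with angle $\omega$ on $L^q(\Omega)$ obtained from the Gaussian estimates route (real case) or Egert's result (complex case), both available under the standing Lipschitz assumption of that subsection. The identification of the abstract extension $\overline{B}$ with $\cA_q+1$ and the UMD/Dore--Venni step for maximal parabolic regularity, which you spell out, are exactly the implicit details the paper leaves to the reader.
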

In particular, the extended square root $\widetilde{(A_q+1)^{-1/2}}$
in~\eqref{eq:adjoint-square-root-kato} coincides
with the inverse square root $(\cA_q +1)^{-1/2}$ of $\cA_q + 1$. Moreover, $\cA_q$ is the negative generator of an holomorphic, strongly continuous
semigroup $\cT_q$ on $W^{-1,q}_D(\Omega)$ whose angle of holomorphy again coincides with $\omega$.

We mention also that via similarity, we can factorize the resolvents of $\cA_q$ and the semigroup $\cT_q$
generated by
$-\cA_q$ by means of $A_q$ and $T_q$:
\begin{equation}\label{eq:resolvcarryover}
  (\cA_q+ \lambda)^{-1} = ( A_q+ 1)^{1/2} ( A_q+ \lambda)^{-1}
  (\cA_q+ 1)^{-1/2 } =
  (\cA_q+ 1)^{1/2} ( A_q+ \lambda)^{-1}
  (\cA_q+ 1)^{-1/2 }
\end{equation}
and, with $T_q$ being the semigroup generated by $-A_q$ on $L^q(\Omega)$,
\begin{equation*}
  \cT_q(t) = ( A_q+ 1)^{1/2} T_q(t)
  (\cA_q+ 1)^{-1/2 } =
  (\cA_q+ 1)^{1/2} T_q(t)
  (\cA_q+ 1)^{-1/2 }.
\end{equation*}
From~\eqref{eq:resolvcarryover}, it is immediate to see how resolvent estimates and in particular
sectoriality of $A_q$ explicitly transfer to $\cA_q$.

\subsubsection{The operators in the negative Bessel scale}

Let us now consider the realization of the operator $\cA_q$ on interpolation spaces between $L^q(\Omega)$ and
$W^{-1,q}_D(\Omega)$. The motivation for doing so is as follows: In many cases, if one is confronted with
nonlinear evolution equations whose reaction terms do not depend only on the physical quantities
themselves but also on their gradients, then ordinary negative Sobolev spaces as introduced in the
subsection before do not allow to adequately capture the corresponding nonlinearities. For example, this
is the case for the VanRoosbroeck system~(\cite{Selber,jerome}), describing the flow of electrons and
holes in semiconductors, when the well-known Avalanche generation~(\cite{ebers,hamilton,spirito}) is in
power. See~\cite{Disser,HannesVR} for more details. Yet, other aspects of the problem, such as
inhomogeneous Neumann boundary data or discontinuous coefficients, do not allow to revert to the
$L^p(\Omega)$ scale.

It turns out that in such cases certain interpolation spaces
$\bX_s \coloneqq [L^q(\Omega),W^{-1,q}_D(\Omega)]_s$ with $s \in (0,1)$ are adequate for the functional-analytic treatment of
the corresponding equations~\cite{Disser,HannesVR}. The very general geometric framework for $\Omega$ and
$D$ considered so far does not quite allow to identify $\bX_s$ with a well known brand of function
spaces. It is clear that the default candidate would be of type $H^{-s,q}_D(\Omega)$, that is, from the scale
of (duals of) Bessel potential spaces, and indeed, under not much stronger geometric assumptions than
required so far, it is established in the seminal paper~\cite{Bechtel_Egert} that $\bX_s$ can be
identified as the (anti-) dual of a Bessel potential space $H^{s,q'}_D(\Omega)$ whose elements are of
fractional smoothness $s$ and incorporate a homogeneous Dirichlet boundary condition on $D$, at least so
if $s > 1-\sfrac1q$. While the precise identification of $\bX_s$ with a classical type of function space
is useful in many scenarios, it is not required for the following, so we do not elaborate further.

Now, let $q$ be as in Proposition~\ref{p-squareroot}, and let $\bA_q$ be the part of $\cA_q$ in
$\bX_s$. Then it is by definition consistent with $A_q$ on $L^q(\Omega)$ and with $\cA_q$ on $\bX_s$, and we
obtain quite directly:

\begin{proposition}\label{prop:bessel-scale}
  Let $q$ be as in Proposition~\ref{p-squareroot} and let $s \in (0,1)$. Then $\bA_q+1$ is a positive
  operator on $\bX_s$. Moreover, it admits a bounded
  $\cH^\infty$-calculus on $\bX_s$ with the $\cH^\infty$-angle $\omega$. In particular, $\bA_q$ satisfies maximal
  parabolic regularity on $\bX_s$.
\end{proposition}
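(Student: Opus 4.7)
The plan is to apply the extrapolation principle of Proposition~\ref{prop:transfer-by-square-root} with $X = L^q(\Omega)$, $Y = \bX_s$, $B = A_q+1$, and $\beta = s/2$. Several prerequisites are already in hand: the bounded $\cH^\infty$-calculus of $A_q+1$ on $L^q(\Omega)$ with angle $\omega$ and positivity of $A_q+1$ on $L^q(\Omega)$ (the shift by $1$ enforces $(-\infty,0]\subset\rho(A_q+1)$ by the sectoriality $\sigma(A_q)\subset\Sigma_\omega$ with $\omega < \sfrac\pi2$) are provided by Sections~\ref{ss-realcoeff} and~\ref{ss-complexcoeff}, and underpin Corollary~\ref{cor:properties-for-w-1-q}. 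The dense embedding $L^q(\Omega) \hookrightarrow \bX_s$ is a standard feature of complex interpolation over the dense embedding $L^q(\Omega) \hookrightarrow W^{-1,q}_D(\Omega)$. The remaining step is to show that $(A_q+1)^{-s/2}$ extends to a topological isomorphism $\bX_s \to L^q(\Omega)$.

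For this I would identify $\bX_s$ with a domain of a fractional power of $\cA_q+1$. Since $\cA_q+1$ admits a bounded $\cH^\infty$-calculus on $W^{-1,q}_D(\Omega)$ by Corollary~\ref{cor:properties-for-w-1-q} and in particular has bounded imaginary powers, the classical interpolation formula for fractional-power domains yields
\[
  \bigl[\dom\bigl((\cA_q+1)^{1/2}\bigr),W^{-1,q}_D(\Omega)\bigr]_s = \dom\bigl((\cA_q+1)^{(1-s)/2}\bigr)
\]
with equivalent norms. Combined with the Kato square root identification $\dom((\cA_q+1)^{1/2}) = L^q(\Omega)$ from Proposition~\ref{p-squareroot}, this gives $\bX_s = \dom((\cA_q+1)^{(1-s)/2})$ topologically. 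Hence $(\cA_q+1)^{s/2}$ restricts to an isomorphism $L^q(\Omega) \to \bX_s$, and its inverse is the desired extension of $(A_q+1)^{-s/2}$ to $\bX_s \to L^q(\Omega)$.

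Proposition~\ref{prop:transfer-by-square-root} now supplies a unique positive extension of $A_q+1$ to $\bX_s$ admitting a bounded $\cH^\infty$-calculus of angle $\omega$. This extension must coincide with $\bA_q+1$: by construction $\bA_q+1$ extends $A_q+1$, and, being the part of the positive operator $\cA_q+1$ in $\bX_s$, it inherits the positive-operator property (resolvent boundedness on $\bX_s$ is obtained by interpolating the resolvent bounds on $L^q(\Omega)$ and $W^{-1,q}_D(\Omega)$). The uniqueness clause in Proposition~\ref{prop:transfer-by-square-root} then forces the identification. Maximal parabolic regularity follows since $\bX_s$ is UMD as a complex-interpolation space of two UMD spaces and $\omega < \sfrac\pi2$, so the Dore-Venni mechanism as in Remark~\ref{rem:hiebpruess} applies.

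The main obstacle is the interpolation identity $\bX_s = \dom((\cA_q+1)^{(1-s)/2})$. Although the general fractional-power interpolation result is classical once one has bounded imaginary powers, producing it in the present form requires combining Proposition~\ref{p-squareroot} (to translate the Sobolev-scale endpoint into a fractional-power domain) with Corollary~\ref{cor:properties-for-w-1-q} (to legitimize the fractional-power interpolation formula on $W^{-1,q}_D(\Omega)$); this interplay is what allows the extrapolation argument to cover every intermediate space $\bX_s$, not just the endpoint.
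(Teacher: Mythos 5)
Your argument is correct, but it takes a genuinely different route from the paper for the $\cH^\infty$-calculus step. The paper's proof is shorter: it simply observes that the resolvents of $A_q$ on $L^q(\Omega)$ and of $\cA_q$ on $W^{-1,q}_D(\Omega)$ are consistent (recall~\eqref{eq:resolvcarryover}), and then interpolates the positivity estimate~\eqref{eq:positive-operator} and the bounded $\cH^\infty$-calculus estimate~\eqref{eq:functcalc} directly to $\bX_s$, with the angle $\omega$ preserved because one interpolates between two endpoint estimates sharing that angle. You instead re-invoke Proposition~\ref{prop:transfer-by-square-root} with $\beta = s/2$, which requires the nontrivial identification $\bX_s = \dom\bigl((\cA_q+1)^{(1-s)/2}\bigr)$ obtained by combining the Kato square root property (Proposition~\ref{p-squareroot}) with the fractional-power-domain interpolation formula, legitimized here by the bounded imaginary powers from Corollary~\ref{cor:properties-for-w-1-q}. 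Both routes are valid; yours is conceptually appealing because it frames the entire Section~\ref{sec:oper-negat-sobol} around a single transfer principle and incidentally provides the explicit similarity $(\cA_q+1)^{s/2}\colon L^q(\Omega)\to\bX_s$, but it is heavier machinery than the paper needs. Note also that when you argue $\bA_q+1$ is itself positive on $\bX_s$ in order to invoke the uniqueness clause, you appeal to interpolation of the resolvent bounds — which is precisely the paper's argument — so the savings of the transfer route are partially offset: once one has that interpolation step, one may as well interpolate the $\cH^\infty$-estimate too and be done.
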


\begin{proof}
  Let $\lambda \geq 1$. Since the resolvents $(A_q +\lambda)^{-1}$ on $L^q(\Omega)$ and
  $(\cA_q +\lambda)^{-1}$ on $W^{-1,q}_D(\Omega)$ are consistent, that is, $(\cA_q +\lambda)^{-1}$ coincides with
  $( A_q +\lambda)^{-1}$ on $L^q(\Omega)$, complex interpolation shows that the positive operator
  estimates~\eqref{eq:positive-operator} for these resolvents carry over to $\bX_s$, recall
  also~\eqref{eq:resolvcarryover}. This implies that $\bA_q+1$ is a
  positive operator on $\bX_s$, so in particular sectorial and with the same spectral angle as $A_q+1$ and
  $\cA_q+1$, cf.\ Corollary~\ref{cor:properties-for-w-1-q}. Again by interpolation, it follows that
  $\bA_q+1$ in fact admits a bounded $\cH^\infty$-calculus on $\bX_s$ with the $\cH^\infty$-angle
  $\omega$ since $A_q+1$ and $\cA_q+1$ do so on $L^q(\Omega)$ and $W^{-1,q}_D(\Omega)$.
\end{proof}

We remark that in the proof we see nicely the remarkable value of the square root isomorphism
property~\eqref{eq:adjoint-square-root-kato}: It allowed to transfer the desirable properties of the
operators from $L^q(\Omega)$ to $W^{-1,q}_D(\Omega)$, with explicit estimates due to the similarity
relation~\eqref{eq:resolvcarryover}. By jumping over a whole order of differentiability, all the spaces
$\bX_s$ \emph{in between} are covered automatically.


\subsection*{Comments and open questions}

We close with some concluding remarks.

\begin{enumerate}[(i)]
\item The form method used to construct the operators $A$ on $L^2(\Omega)$ is does not require the particular
  space combination $W^{1,2}(\Omega)$ (or subspaces thereof) and $L^2(\Omega)$ but works in a very general setup in
  general Hilbert spaces $V$ and $H$ linked by a transference map $j \in \cL(V \to H)$ with dense
  range~\cite{AE}. It was already mentioned in Remark~\ref{rem:some-sector-rules} that it is also of
  interest to consider weighted spaces of the same type, that is, with a measure different from the
  Lebesgue measure. This then also transfers to the corresponding $L^p$-realizations. The crucial point
  remains that the decisive sector inclusion is that of the \emph{form}.
\item Continuing the previous point, one can also incorporate dynamic boundary conditions in the abstract
  operator $A$ by constructing the operator in hybrid $L^2$-spaces, so in a product
  $L^2(\Omega) \otimes L^2(\Gamma)$ with a subset $\Gamma$ of the boundary $\partial\Omega$. The
  corresponding measure is a sum of the Lebesgue measure on the domain and the $(d-1)$-dimensional
  Hausdorff measure $\cHaus_{d-1}$ on $\Gamma$. We refer to~\cite{Chill,elstmeyries,hoemb} and the recent
  contribution~\cite{boehnlein-dynamic}.
\item In the context of Theorem~\ref{thm:form} and Corollary~\ref{cor:trivial}, we have
  \begin{equation*}
    \omega(\ft_V) \leq \omega(\mu) \leq \esssup_{x \in \Omega} \frac {n(\ImOp{\mu(x))}}{m(x)}.
  \end{equation*}
  We have already discussed sharpness of the second inequality in Remark~\ref{rem:sharpness}, yet, of
  course, it is clearly also very interesting to understand whether or when the first one is sharp as
  well. In fact, there is a generic way to generate examples for which $\omega(\ft_V) < \omega(\mu)$: choose
  $V$ and a coefficient matrix $\mu$ such that $\mu$ is not selfadjoint, but the operator $A$ induced by the
  form $\ft_V$ is so. Such an construction seems to be hinted at in~\cite[Rem.~4]{ChillSector}. Then
  $N(A)$ is real, thus so is $N(\ft_V)$ and we obtain $\omega(\ft_V) = 0$. But $N(\mu)$ cannot be a subset of
  $\R$, otherwise $\mu$ would be selfadjoint, so that $\omega(\mu) > 0$. Let for example
  $V = W^{1,2}_{\partial \Omega}(\Omega) = W^{1,2}_0(\Omega)$ with an open set $\Omega \subseteq \R^2$ and pick, say,
  $\mu = \left(\begin{smallmatrix} 1 & -\alpha \\ \alpha & 1 \end{smallmatrix}\right)$ with
  $\alpha \in \R$. Then $A$ becomes the Dirichlet Laplacian. As mentioned before, such examples depend on both
  the coefficient matrix and the subspace $V$. Indeed, for $V = W^{1,2}(\Omega)$ with $\Omega$ bounded (in any
  space dimension), the example does not work and one can show that $\omega(\ft) = \omega(\mu)$,
  see~\cite[Rem.~5.12]{arendTOM}. It seems however quite hard to give a precise and useful
  characterization on when the angles coincide or not. In one space dimension it is not hard to see that
  there is always equality, but this relies crucially on the particular dimension $1$. We leave further
  investigations and partial results in this direction for future work.
\item We consider it frustrating that in the complex coefficients case considered in
  Section~\ref{ss-complexcoeff}, in particular Theorem~\ref{thm:numrange-complex-estimate}, we were not
  able to derive an estimate that collapses to the corresponding one for the real case considered in
  Theorem~\ref{thm:chill} if $\Im \mu \equiv 0$. It is our hope that some improvement will be achieved here in the
  future. Following our strategy of proof via Theorem~\ref{thm:numerical-range-p-elliptic-operator}, it
  would be sufficient to work on a sector inclusion for $N_p(\mu)$, that is, the question becomes one of
  matrix analysis.
\end{enumerate}

\medskip \textbf{Acknowledgment}: We wish to thank, in alphabetical order, Moritz Egert (Darmstadt), Tom
ter Elst (Auckland), Jochen Gl\"uck (Wuppertal), Markus Haase (Kiel), Peer Kunstmann (Karlsruhe) and
Hendrik Vogt (Bremen) for stimulating discussions on the subject.

\end{document}